\newtheorem{thm}{Theorem}[section]
\newtheorem{lem}[thm]{Lemma}
\newtheorem{rem}[thm]{Remark}
\newtheorem{prop}[thm]{Proposition}
\newtheorem*{thm0}{Theorem \ref{thm:KSloop-2}}
\newtheorem*{thm1}{Theorem \ref{thm:cv-loop}}
\newtheorem*{thm2}{Theorem \ref{thm:cv-expl}}
\newtheorem*{thm3}{Lemma \ref{lem:cv-explafterT}}
\newtheorem*{thm4}{Lemma \ref{lem:charact-cv-expl}}
\newtheorem*{thm5}{Lemma \ref{lem:DRN-expl}}
\newtheorem*{thm6}{Theorem \ref{thm:cv-explbubble}}
\newtheorem*{thm7}{Proposition \ref{prop:cv-det}}
\theoremstyle{definition}
\newtheorem{defn}[thm]{Definition}
\def \a {\alpha}
\def \g {\gamma}
\def \d {\delta}
\def \D {\Delta}
\def \e {\varepsilon}
\def \s {\sigma}
\def \Si {\Sigma}
\def \O {\Omega}
\def \z {\zeta}      
\def \Co {\mathbb{C}}
\def \Di {\mathbb{D}} 
\def \H {\mathbb{H}}
\def \N {\mathbb{N}} 
\def \P {\mathbb{P}} 
\def \R {\mathbb{R}} 
\def \S {\mathbb{S}}
\def \Z {\mathbb{Z}}
\def \ind {\mathbbm{1}}
\def \A {\mathcal{A}}
\def \B {\mathcal{B}}
\def \C {\mathcal{C}} 
\def \F {\mathcal{F}}
\def \G {\mathcal{G}}
\def \Ha {\mathcal{H}}
\def \L {\mathcal{L}}
\def \Me {\mathcal{M}}
\def \Top {\mathcal{T}}
\def \Id {{\rm Id}}
\def \l {\ell}
\def \lo {\mathrm{loop}}
\def \M {\mathrm{M}}
\def \SLE {\mathrm{SLE}}
\def \T {\mathrm{T}}
\def \Tr {\mathrm{Tr}}
\def \sm {\setminus}
\def \arcperso {\overline}
\newcommand{\mc}{\mathcal}
\DeclareMathOperator{\Det}{Det}
\title{An SLE$_2$ loop measure}
\author{St\'ephane Benoist\\Julien Dub\'edat\footnote{Partially supported by NSF grant DMS-1005749.}}
\begin{document}

\maketitle

\begin{abstract}
There is an essentially unique way to associate to any Riemann surface a measure on its simple loops, such that the collection of measures satisfy a strong conformal invariance property (see \cite{Wer_loops}). These random loops are constructed as the boundary of Brownian loops, and so correspond in the zoo of statistical mechanics models to central charge $0$, or Schramm-Loewner Evolution (SLE) parameter $\kappa=8/3$.
The goal of this paper is to construct a family of measures on simple loops on Riemann surfaces that satisfies a conformal covariance property, and that would correspond to SLE parameter $\kappa=2$ (central charge $-2$).  On planar annuli, this loop measure was already built by Adrien Kassel and Rick Kenyon in \cite{KasKen_RandomCurves}. We will give an alternative construction of this loop measure on planar annuli, investigate its conformal covariance, and finally extend this measure to general Riemann surfaces. This gives an example of a Malliavin-Kontsevich-Suhov loop measure \cite{KontSuh} in non-zero central charge.
\end{abstract}

\tableofcontents

\section{Introduction}
Our goal is to construct a family of measures on simple loops on Riemann surfaces related to $\SLE_2$. To each Riemann surface $\Sigma$, one associates a measure on the space $\L(\Sigma)$ of its (non-oriented) simple loops (i.e. the space of injective maps $\S^1 \rightarrow \Sigma$, up to increasing or decreasing reparametrization), satisfying a certain, central-charge dependent restriction condition when comparing the measure on a surface $\Sigma$ to the one on $\Sigma'$, whenever $\Sigma'\subset\Sigma$. See Section \ref{from-ann-to-rs} for a detailed discussion.

\begin{thm}\label{thm:KSloop-2}
There exists a $c$-locally conformally covariant loop measure, in the sense of Kontsevich and Suhov (Definition \ref{def:c-lcc}), with parameter $c=-2$.
\end{thm}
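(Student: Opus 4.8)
The plan is to build the measure in stages, following the organization of the paper.

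\emph{Step 1 (annuli).} On a doubly connected domain $A$ one constructs a measure $\mu_A$ on the simple, non-contractible loops drawn on $A$ as the scaling limit of the loop-erased random walk loop measure on a lattice approximation: the relevant discrete object is (up to normalization) the loop appearing in a cycle-rooted spanning forest, whose law is governed by a ratio of determinants of discrete Laplacians. Convergence of the loops themselves to an $\SLE_2$-type loop, and of the associated exploration processes, is the content of Theorem \ref{thm:cv-loop} and of the exploration statements (Theorem \ref{thm:cv-expl}, Lemmas \ref{lem:cv-explafterT}, \ref{lem:charact-cv-expl}, Theorem \ref{thm:cv-explbubble}), while convergence of the normalizing partition functions to $\zeta$-regularized determinants of Laplacians is Proposition \ref{prop:cv-det}. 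Together these produce a well-defined finite measure $\mu_A$ with an explicit total mass.

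\emph{Step 2 (conformal covariance on annuli).} The essential computation is the Radon--Nikodym derivative obtained by restriction: for a sub-annulus $A'\subset A$ one shows $\frac{d\mu_{A'}}{d\mu_A}(\ell) = \ind_{\{\ell\subset A'\}}\,\exp\!\big(\tfrac{c}{2}\,\mathcal A(A,A';\ell)\big)$ with $c=-2$, where $\mathcal A$ is the conformal anomaly, a difference of logarithms of Dirichlet Laplacian determinants (equivalently a Brownian-loop-measure/Loewner-energy term in the $\kappa=8/3$ normalization, reweighted by the $c=-2$ factor). In the discrete model this identity is transparent from the spanning-forest weights, and in the limit it follows from Lemma \ref{lem:DRN-expl} together with Proposition \ref{prop:cv-det}. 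The same input gives conformal covariance: for a conformal map $\varphi\colon A\to\widetilde A$, the measures $\varphi_*\mu_A$ and $\mu_{\widetilde A}$ differ by the appropriate power of the conformal factor, because determinants of Laplacians transform by the Polyakov anomaly formula.

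\emph{Step 3 (extension to Riemann surfaces and verification of the axioms).} Given a Riemann surface $\Sigma$ and a simple loop $\ell\subset\Sigma$, a sufficiently thin collar neighborhood of $\ell$ is conformally an annulus, so Step~1 assigns a local density near $\ell$; the restriction--covariance relation of Step~2 guarantees that these local densities are mutually compatible and glue into a single measure $\mu_\Sigma$ on $\mathcal L(\Sigma)$, by the procedure of Section \ref{from-ann-to-rs}. It then remains to verify Definition \ref{def:c-lcc}: conformal invariance of the family under biholomorphisms is inherited from Step~2, and the restriction/covariance property for $\Sigma'\subset\Sigma$ reduces, after localizing near the loop, to the annulus statement of Step~2, using that the conformal anomaly is additive under the relevant surgeries (Polyakov factorization of $\det\Delta$). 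This produces a $c$-locally conformally covariant loop measure with $c=-2$, hence an example of a Malliavin--Kontsevich--Suhov measure at non-zero central charge. The crux of the whole argument is to identify the scaling-limit Radon--Nikodym derivative \emph{precisely} as the $c=-2$ anomaly with no spurious multiplicative constant, and to control the determinants of Laplacians under degeneration of the annulus (thin or pinching annuli): this is why Proposition \ref{prop:cv-det} and the gluing of Section \ref{from-ann-to-rs} form the technical heart, on top of the delicate convergence of explorations carried by Lemmas \ref{lem:cv-explafterT}--\ref{lem:charact-cv-expl}.
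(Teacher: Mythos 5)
Your proposal follows essentially the same route as the paper: construct $\mu^\#_A$ on annuli as the scaling limit of the wired-UST interface, identify the restriction Radon--Nikodym derivative as $e^{-\M(A,A';\ell)}=e^{\frac{c}{2}\M(A,A';\ell)}$ at $c=-2$ via ratios of Laplacian determinants, and glue over the cover of $\mathcal{L}(\Sigma)$ by spaces $\mathcal{L}^\times(A)$ using the cocycle property of $\M$ and the trivialization of $|\Det|_\Sigma^{\otimes c}$ by $\zeta$-regularized determinants. One small correction: no control of degenerating (thin or pinching) annuli is required --- consistency of the local definitions is purely a consequence of the cocycle property (Proposition \ref{prop:cocycle}) applied to nested annuli, and the technical heart on the surface side is rather the well-definedness of $\M$ independently of the neutral collection of metrics (Proposition \ref{prop-defcocycle}) together with its regularity in $\ell$ (Remark \ref{rem:cont-cocycle}).
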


The parameter $c$ can be interpreted as the central charge of field theory (see for example \cite{Dub_SLEVirloc}). At $c=0$, existence and uniqueness was established earlier by Werner \cite{Wer_loops}. The present result yields existence at $c=-2$; existence and uniqueness are conjectured to hold for $c\leq 1$ in \cite{KontSuh}. Via welding, a (finite) measure on simple loops induces a measure on homeomorphisms of the unit circle, a problem initially considered by Malliavin (\cite{Mal_diffcircle}). Here, the measures are supported on loops that are, in a loose sense, locally absolutely continuous with respect to $\SLE_2$. 

We are first going to construct these measures on topologically non-trivial loops drawn in piecewise-$\C^1$ conformal annuli, as limits of random loops on discrete graphs. Consider a conformal planar annulus $A$ with piecewise $\C^1$ boundary, and let us consider the natural approximation of $A$ by a family $A^\d$ of finite subgraphs of $\d\Z^2$ (Definition \ref{natural-approx}). On such a discrete annuli $A^\d$, consider the wired uniform spanning tree (or UST, see Definition \ref{def:UST}), which is a random subgraph of $A^\d$.

The wired UST on $A^\d$ has two connected components, one attached to the outer boundary of the annulus, the other attached to its inner boundary. These two connected components are in contact with each other alongside a simple closed curve $\l^\d_{A^\d}$, that winds once around the central hole. The loop $\l^\d_{A^\d}$ can equivalently be seen as the unique cycle in the subgraph of $\d\Z^2 + (\d/2,\d/2)$ dual to the spanning tree.
\begin{figure}[htb]
\begin{center}
\includegraphics[width = 10cm]{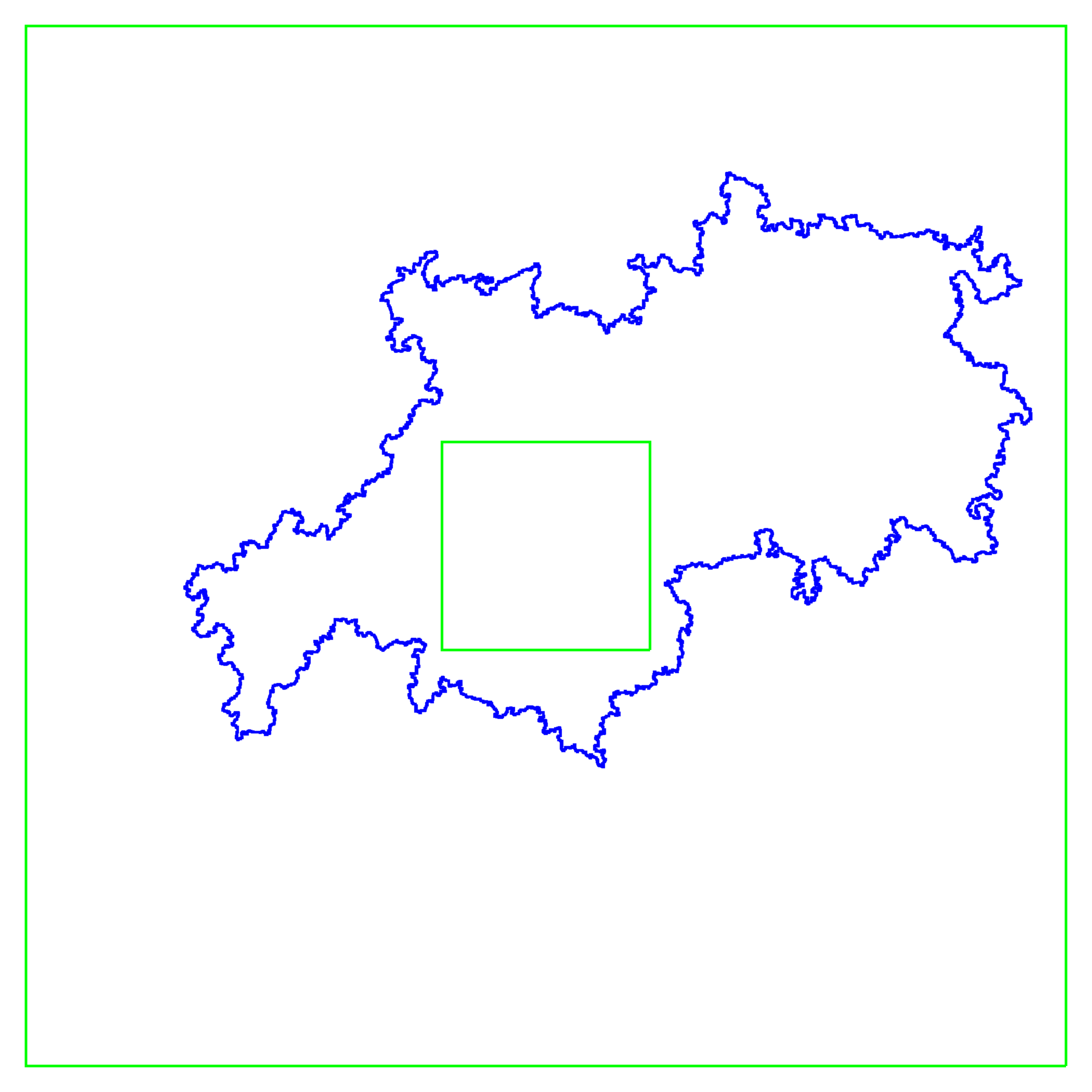}
\caption{A loop drawn according to $\mu^\#_A$. We first sampled the underlying UST using Wilson algorithm (\cite{Wi}), and explored it to find the dual loop. It is not possible to directly sample this loop using a modified Wilson algorithm, as in \cite{KasKen_RandomCurves}.}
\end{center}
\end{figure}

\begin{thm}\label{thm:cv-loop}
When the mesh size $\d$ goes to $0$, the random loop $\l^\d_{A^\d}$ converges in law (for the topology $\Top$ of uniform convergence up to reparametrization) towards a measure $\mu^\#_A$ on the set of loops ${\mc L}^\times(A)$ (the subset of topologically non-trivial loops in ${\mc L}(A)$, i.e. the set of simple loops drawn on the annulus $A$ that generate $\pi_1(A)$). Moreover, the family of measures $\mu^\#_.$ satisfies an explicit conformal covariance property (Proposition \ref{prop:covar-conf}).
\end{thm}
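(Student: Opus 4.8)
The plan is to establish convergence in two stages: first identify the scaling limit of the dual loop $\l^\d_{A^\d}$ in terms of a pair of $\SLE$-type curves coming from Wilson's algorithm, and then extract from this the announced measure $\mu^\#_A$ together with its covariance. First I would recall the structure of the wired UST on $A^\d$: running Wilson's algorithm with loop-erased random walks, the two boundary-rooted components of the tree are separated by $\l^\d_{A^\d}$, and this loop is precisely the interface between the tree and its planar dual. The key input is the convergence of loop-erased random walk to $\SLE_2$ (Lawler--Schramm--Werner), together with its natural generalization to the doubly-connected setting: a loop-erased walk from a boundary point of the annulus, stopped upon disconnecting the two boundary components, converges to a radial-type $\SLE_2$, and the complementary exploration of the dual tree converges to the corresponding dual curve. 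Concretely, I would condition on a "marked point" construction — pick a boundary vertex, run the LERW toward the opposite boundary; the trace disconnects the annulus, and $\l^\d_{A^\d}$ is obtained by a further loop-erasure/dual-curve procedure in the two resulting simply-connected (or still doubly-connected) pieces. Passing to the limit along these steps and then removing the auxiliary marked point (by integrating it out, using reversibility and the Markov/domain-Markov properties of $\SLE_2$) yields a candidate limiting law $\mu^\#_A$ on ${\mc L}^\times(A)$.

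The second stage is the conformal covariance. Since the UST on a graph is a combinatorial object and conformal maps between piecewise-$\C^1$ annuli do not preserve $\d\Z^2$, covariance cannot be read off at the discrete level; instead I would derive it in the limit. The natural route is to first record the behavior of $\mu^\#_A$ under the obvious symmetries that ARE visible: conformal automorphisms of a fixed annulus $\{1<|z|<r\}$ (rotations, inversion), which force rotational invariance of the limit and identify $r$ (equivalently the modulus) as the only conformal invariant. Then, to compare different annuli, I would use the discrete Radon--Nikodym / restriction-type identities satisfied by the UST loop: for $A' \subset A$ with the same homotopy type, the ratio of the probability weights of $\l^\d$ in $A^\d$ versus in $(A')^\d$ is expressible through spanning-tree partition functions (determinants of discrete Laplacians, à la Kassel--Kenyon), whose asymptotics are governed by the relevant conformal invariants (capacities, the Loewner drift term, or $\zeta$-regularized determinants). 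Taking $\d \to 0$ in these ratios, using the known convergence of discrete Laplacian determinants to their continuum counterparts, produces the explicit conformal covariance rule of Proposition \ref{prop:covar-conf}, with the scaling exponent pinned to the value associated with $\kappa=2$, $c=-2$.

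Two technical points need care. The first is tightness and the identification of the topology: one must show the laws of $\l^\d_{A^\d}$ are tight for $\Top$ (uniform convergence up to reparametrization of simple loops) and that any subsequential limit is supported on simple loops generating $\pi_1(A)$ — i.e., the limit does not degenerate onto a boundary component or develop self-touchings. Here I would use a priori estimates on LERW (e.g. Beurling-type bounds, and the fact that $\SLE_2$ is a simple curve) transported through the marked-point construction, plus control near the piecewise-$\C^1$ boundary. The second, and I expect the \emph{main obstacle}, is the removal of the auxiliary marked point and the proof that the resulting object is genuinely well-defined and independent of the choices made in Wilson's algorithm: this requires a commutation/consistency argument showing that the two natural ways of exploring $\l^\d_{A^\d}$ (from the inner vs. outer boundary, or from different marked points) have the same limit — a reversibility statement for the doubly-connected $\SLE_2$ loop. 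This is the step where the bulk of the analytic work lies, and it is also what makes the limiting measure intrinsic enough to later extend to general Riemann surfaces.
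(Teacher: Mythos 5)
Your route differs fundamentally from the paper's, and the difference matters: the paper never uses the convergence of loop-erased random walk to $\SLE_2$. Instead it works with the \emph{exploration (Peano) process} $e^\d_{A^\d}$ of the outer component of the wired UST, whose convergence to $\SLE_8$ in simply-connected domains is the known input (Lawler--Schramm--Werner); the loop $\l^\d_{A^\d}$ is then recovered as the boundary of the limiting exploration process (together with the a priori arm estimates showing the limit is a simple loop away from $\partial A$). The annulus topology is handled not by a radial/annulus variant of $\SLE_2$ but by an absolute-continuity argument: up to the hitting time of a neighborhood of a cut $c$, the exploration in $A^\d$ is absolutely continuous with respect to the exploration in the simply-connected domain $A^\d\sm c^\d$, with a Radon--Nikodym derivative written as a ratio of determinants of compositions of harmonic extension operators, which converge to Fredholm determinants. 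This is precisely engineered to avoid the step you correctly identify as your main obstacle.

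That obstacle is a genuine gap, not just a technical point to be deferred. First, the loop $\l^\d_{A^\d}$ is the unique cycle in the dual of the wired spanning tree, i.e.\ the full interface between the two boundary-rooted components; it is not obtained from a single LERW branch plus a ``further loop-erasure/dual-curve procedure,'' and no such construction is specified precisely enough to pass to a limit. Second, even granting a marked-point construction that produces a candidate limit law for each choice of marked boundary point, the ``integration out'' of the marked point requires a reversibility/consistency statement for an annulus $\SLE_2$-type curve that was not available (reversibility results of this kind are deep theorems even in the chordal case, and the doubly-connected $\SLE_2$ loop is exactly the object being constructed, so one cannot invoke its symmetries to build it). The paper gets the needed intrinsic-ness for free: the limit is characterized by its restrictions before the stopping times $T^\e_c$, where it is absolutely continuous with respect to the conformally invariant $\SLE_8(2)$ with a conformally invariant (harmonic) density, and the inner/outer symmetry comes from the observation that the same loop is the boundary of the inner exploration process. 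On the covariance step your instinct is closer to the paper's: the restriction property is indeed proved through ratios of spanning-tree counts, i.e.\ ratios of discrete Laplacian determinants. But be aware that individual discrete Laplacian determinants do not converge to $\zeta$-regularized determinants without renormalization; the paper's key move (Lemma \ref{lem:ratio-det}) is to rewrite the relevant \emph{ratio of four} Laplacian determinants as a single Fredholm determinant of a composition of harmonic extension operators between two cuts, and it is that quantity whose discrete-to-continuum convergence is actually proved. As stated, your appeal to ``known convergence of discrete Laplacian determinants to their continuum counterparts'' is not a usable input.
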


An important feature of the measure $\mu^\#_A$ is its invariance under conformal isomorphisms, and in particular its invariance under inversions (i.e. by the conformal isomorphisms of the annulus $A$ that switch the inner and outer boundaries).

This first statement of Theorem \ref{thm:cv-loop} will follow from the convergence of the exploration process (Definition \ref{def:explproc}) of the outer component of the wired UST. Consider a point $a$ on the outer boundary of $A$, and let $e^\d_{A^\d}$ be the counterclockwise exploration of the UST starting from $a^\d$, a lattice approximation of $a$.

\begin{thm2}
The exploration process $e^\d_{A^\d}$ converges in law (for the topology $\Top$) to a continuous process $e_A$.
\end{thm2}

To prove Theorem \ref{thm:cv-expl}, we will cut the exploration process into two parts.
Let us consider the first time $T$ when the trace of $e([0,T])$ disconnects the inner boundary from the point $a$. What happens after time $T$ is somewhat irrelevant for Theorem \ref{thm:cv-loop}, but is handed out to us along the way.

\begin{thm3}
After time T, $e_A$ behaves as a chordal $\SLE_8$ inside the remaining domain, headed towards $a$.
\end{thm3}

In order to understand the behavior of the exploration process up to time $T$, it is enough to understand it up to a certain family of stopping times $T_{c}^{\e}$ of supremum $T$. Consider a cut $c$, i.e. a smooth simple curve in $A$ connecting the two components of the boundary, and intersecting them orthogonally\footnote{Some regularity assumption on the boundary of $A$, and on the curve $c$ are needed for Theorems \ref{thm:cv-expl} and \ref{thm:cv-explbubble} to hold. It is indeed possible to construct a domain - the boundary of which is not a continuous curve - in which the continuous exploration process $e$ would not be a curve. Schramm proved certain estimates on the UST assuming $\mathcal{C}^1$ boundary, and we will follow him on this (see Theorem 11.1 of \cite{S0} and the remark that follows).}. Let $T_{c}^{\e}$ be the first hitting time of the $\e$-neighborhood of $c$ by the exploration process $e_A$.

\begin{thm4}
The law of $e_A$ stopped at all of the times $T_{c}^{\e}$ is enough to characterize the law of $e_A$ until the disconnection time $T$ of the point $a$ from the inner boundary of the annulus.
\end{thm4}

Consider a natural grid approximation $c^\d$ of the cut $c$. Up to time $T_{c^\d}^{\e}$, the exploration process $e^\d_{A^\d}$ is absolutely continuous with respect to the exploration process $e^\d_{A^\d\sm c^\d}$ of a wired UST in the simply-connected domain $A^\d\sm c^\d$.

Let us consider a curve $\g^\d$ (staying $\e$-away from $c^\d$) that traces the first steps of the exploration process of a spanning tree, and call $K^\d$ its image. The set $K^\d$ comes with a marked point on its boundary, namely the tip of the curve $\g^\d(t_0)$, and carries natural boundary conditions for the UST in the domain $A^\d \sm K^\d$: free on the counterclockwise arc $\B_{K^\d}=\arcperso{a^\d,\g^\d(t_0)}$, and wired on $\partial K^\d \sm \B_{K^\d}$. Indeed, the law of the UST restricted to $A^\d \sm K^\d$, conditioned on $\g^\d$ being the beginning of the exploration process, is a UST in $A^\d \sm K^\d$ with these boundary conditions.

The Radon-Nikodym derivative of the exploration processes can then be expressed as follows by definition:
$$
\frac{\P[e^\d_{A^\d}\mathrm{\ starts\ by\ }\g^\d]}{\P[e^\d_{A^\d\sm c^\d}\mathrm{\ starts\ by\ }\g^\d]} = \frac{\frac{\#T(A^\d\sm K^\d)}{\#T(A^\d)}}{\frac{\#T(A^\d\sm (K^\d\cup c^\d))}{\#T(A^\d\sm c^\d)}},
$$
where $\#T(\G)$ denotes the number of spanning trees on the graph $\G$.

We will need to rewrite this Radon-Nikodym derivative in a way more amenable to taking scaling limits. In order to do this, let us consider two cuts $d_1^\d$ and $d_2^\d$ that separate the cut $c^\d$ from $K^\d$. For simplicity, we will call $d^\d = d_1^\d\cup d_2^\d$ their union. We denote by $\Ha^{\alpha^\d\rightarrow\beta^\d}_{D^\d}$ the discrete operator of harmonic extension (Definition \ref{def:harmext-disc}) from a cut $\alpha^\d$ to another cut $\beta^\d$ in a domain $D^\d$. Boundary conditions for the harmonic extension (see Section \ref{sec:discharmobj}) correspond to the UST boundary conditions in the following way: Dirichlet corresponds to wired, and Neumann corresponds to free.
\begin{figure}[htb]\label{fig-Setup1}
\begin{center}
\includegraphics[width = 10cm]{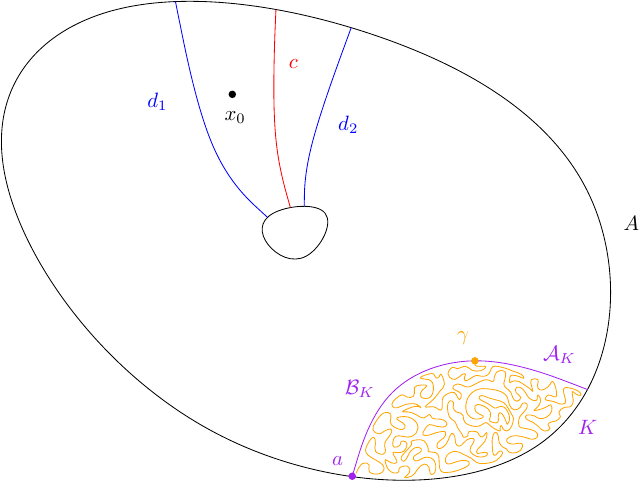}
\caption{The set-up of Lemma \ref{lem:DRN-expl}.}
\end{center}
\end{figure}

\begin{thm5}
We have:
$$
\frac{\P[e^\d_{A^\d}\mathrm{\ starts\ by\ }\g^\d]}{\P[e^\d_{A^\d\sm c^\d}\mathrm{\ starts\ by\ }\g^\d]}  = \frac{\det(\Id - \Ha^{c^\d\rightarrow d^\d}_{A^\d\sm{K^\d}} \circ \Ha^{d^\d\rightarrow c^\d}_{A^\d\sm{K^\d}})}{\det(\Id - \Ha^{c^\d\rightarrow d^\d}_{A^\d} \circ \Ha^{d^\d\rightarrow c^\d}_{A^\d})}.
$$
\end{thm5}

Hence, showing the convergence of $e^\d_{A^\d}$, can be reduced to two steps. First, showing the convergence of the reference process $e^\d_{A^\d\sm c^\d}$:

\begin{thm6}
The exploration process $e^\d_{A^\d\sm c^\d}$ converges in law (for the topology $\Top$) towards a counterclockwise $\SLE_8(2)$, i.e. an $\SLE_8$ aiming at its starting point.
\end{thm6}

And second, showing convergence of the following determinants:

\begin{thm7}
The determinants of the operators $\Id - \Ha^{c^\d\rightarrow d^\d}_{A^\d} \circ \Ha^{d^\d\rightarrow c^\d}_{A^\d\sm{d^\d}}$ and $\Id - \Ha^{c^\d\rightarrow d^\d}_{A^\d\sm{K^\d}} \circ \Ha^{d^\d\rightarrow c^\d}_{A^\d\sm{d^\d}}$ converge towards the determinants of their continuous counterparts.
\end{thm7}

The argument consisting in controlling the convergence of Radon-Nikodym derivatives in order to exploit directly the convergence to chordal SLE established in \cite{LSW_LERW} is somewhat novel and differs from the treatment of SLE convergence in multiply-connected domains of e.g. \cite{Zhan_LERWmult,Izy_Isingmult}.

The convergence of the discrete loop measures $\l^\d_{A^\d}$ towards a measure invariant under conformal isomorphisms (Theorem \ref{thm:cv-loop}) was already established by Adrien Kassel and Rick Kenyon (\cite{KasKen_RandomCurves}, Corollary 20). However, our approaches to this result are essentially disjoint, and complementary: Kassel and Kenyon characterize the limiting loop measure $\mu^\#_A$ via the law of its homotopy class in the annulus $A$ punctured at finitely many arbitrary interior points, relying in particular on difficult algebraic topology results of Fock and Goncharov. In this paper, we moreover investigate what becomes of the discrete restriction property in the continuous setting, which allows us to extend the family of measures to all Riemann surfaces.

The manuscript is organized as follows. We will start by fixing basic notations in Section \ref{def}, and discuss some combinatorial facts related to the UST, in particular how determinants of harmonic operators appear.  In Section \ref{precomp}, we will show tightness of the exploration process. We will then show convergence of the exploration process $e^\d_{A^\d\sm c^\d}$ in Section \ref{expl}, following the approach outlined above. In Section \ref{loop}, we will first prove Theorem \ref{thm:cv-loop} and look at the restriction properties of the family of loop measures $\mu^\#_A$. We will then extend this family to general Riemann surfaces, which is our main result, Theorem~\ref{thm:KSloop-2}.

\section{Background}\label{def}

\subsection{Riemann surfaces}

Let us first clarify the set-up of our paper regarding Riemann surfaces (we refer to \cite{FarKra_RS} for details).

\begin{itemize}

\item A \emph{Riemann surface} $\Si$ is a topological space that is modelled on the complex plane $\Co$: in particular, there is a notion of holomorphic functions on a Riemann surface.

\item The Riemann surfaces we consider will always be orientable and of finite topological type (i.e. the fundamental group $\pi_1(\Si)$ is finitely generated: we are excluding surfaces with infinitely many handles). In order to simplify notations, we do not assume our Riemann surfaces to be connected.

\item There is a unique compactification $\overline{\Si}$ of $\Si$ obtained by glueing a \emph{boundary} $\partial \Si$ (topologically a disjoint union of finitely many points and finitely many circles $\S^1$) such that any point of $\overline{\Si}$ has a neighborhood which is isomorphic (holomorphically) to a neighborhood of $0$ in either the complex plane $\Co$ or the upper half-plane $\H$.

\item A surface whose boundary contains no points is called \emph{puncture free}. The measures on loops we will consider give zero measure to the set of loops going through a predetermined point. Hence, we do not need to distinguish between two Riemann surfaces that are isomorphic up to finitely many punctures. Accordingly, we can and will assume all of our Riemann surfaces to be puncture free.

\item A Riemann surface which has an empty boundary (i.e. which is compact) is called \emph{closed}. An \emph{open} Riemann surface $\Si$ is a Riemann surface that is not compact. To such an open surface, we associate a closed Riemann surface $\widehat{\Si}$ - its \emph{Schottky double} - consisting of $\Si$ and a mirror copy of it, glued alongside their boundaries. For example, the Schottky double of a simply-connected domain is a sphere, and the Schottky double of a conformal annulus is a torus.

\item A (conformal) metric $g$ on a Riemann surface gives a notion of distance compatible with the complex structure. Given a (local) holomorphic isomorphism to $\Co$, a metric $g$ can be written as $e^{2\sigma}|dz|^2$ where $|dz|^2$ is the Euclidean metric on the plane. We call the metric smooth if the function $\sigma$ is smooth on the compactification $\overline{\Si}$, i.e. if partial derivatives of $\sigma$ of all orders exist and can be continuously extended to $\overline{\Si}$.

\item On an open Riemann surface $\Si$, a \emph{well-behaved} metric $g$ is a smooth metric such that each boundary component has a neighborhood which is isometric to a flat cylinder $[0,\e)\times(\R/2\pi\Z)$. A well-behaved metric $g$ naturally extends to a smooth metric $\hat{g}$ on the Schottky double $\widehat{\Si}$.

\item We say that a metric on a Riemann surface $\Si$ is \emph{normalized} if each connected component of $\Sigma$ has area $1$.

\item An important class of Riemann surfaces consists of \emph{domains}, i.e. open subsets of the complex plane. We call a domain \emph{smooth} if its boundary is a smooth (infinitely differentiable) curve. The Euclidean metric restricted to a domain is smooth (as defined above) if and only if the domain is. Moreover, note that the Euclidean metric is never well-behaved.

\end{itemize}

\subsection{Discretization of a continous set-up}
In the course of this paper, we will be interested in different discrete objects (living on planar graphs) that converge towards continuous objects defined on planar domains. These convergences are quite robust, and in particular would hold for any reasonable choice of graphs that approximate a planar domain. Let us describe how we will relate the discrete and continuous set-ups.

\subsubsection{The Carath\'eodory topology}

There is a natural topology on simply-connected domains of the complex plane with a marked interior point, called the Carath\'eodory topology. Let us first give a geometric description of it.

\begin{defn}\label{Car-top}
A sequence of simply-connected domains $(D_n,x_n)$ is said to Carath\'eodory-converge towards $(D_0,x_0)$ if
\begin{itemize}
\item $x_n\rightarrow x_0$.
\item any compactly-contained open subset of $D_0$ is included in $D_n$ for $n$ large enough.
\item any boundary point of $D_0$ is the limit of a sequence of boundary points of $D_n$.
\end{itemize}
\end{defn}

Taking the unit disc $(\Di,0)$ as a simply-connected domain of reference, we can rephrase convergence in the Carath\'edory topology.

\begin{thm}[Carath\'eodory's kernel theorem]\label{Car-ker-thm}
A sequence of marked domains $(D_n,x_n)$ Carath\'eodory-converges towards $(D_0,x_0)$ if and only if the uniformizing maps\footnote{We fix all degrees of freedom in the choice of the uniformizing map $\phi_n$ using the marked point, i.e. we require that $\phi_n(0)=x_n$, and for $\phi'_n(0)$ to be a positive real number.} $\phi_n : (\Di,0) \rightarrow (D_n,x_n)$ converge uniformly on compact subsets towards the uniformizing map  $\phi_0 : (\Di,0) \rightarrow (D_0,x_0)$.
\end{thm}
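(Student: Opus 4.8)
This is an equivalence, so I would treat the two implications separately. The main tools are all classical: Montel's theorem (locally bounded families of holomorphic functions are normal), the Koebe one-quarter and distortion/growth theorems for univalent maps, Hurwitz's theorem (a locally uniform limit of univalent maps is univalent or constant), the argument principle, and the Schwarz--Pick lemma. Throughout, all domains are assumed to be proper simply connected subdomains of $\Co$, so the normalized uniformizing maps $\phi_n$ exist and are unique.

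Assume first that $\phi_n\to\phi_0$ locally uniformly, and check the three conditions of Definition~\ref{Car-top}. The first is immediate: $x_n=\phi_n(0)\to\phi_0(0)=x_0$. For the second, given an open $K$ with $\overline K\subset D_0$ compact, I would note that $\phi_0^{-1}(\overline K)$ is a compact subset of $\Di$, hence contained in some disc $\{|z|<r\}$ with $r<1$ on whose boundary circle $\phi_0$ winds once around each point of $\overline K$; uniform convergence on $\{|z|=r\}$ then makes $\phi_n(\{|z|=r\})$ wind once around each such point for $n$ large, so $\overline K\subset\phi_n(\{|z|<r\})\subset D_n$ by the argument principle. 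The third condition is the subtle one: if some $b\in\partial D_0$ stayed at distance $\geq\eta>0$ from $\partial D_n$ along a subsequence, then each such $D_n$ either contains $B(b,\eta)$ or misses it; the second alternative contradicts the (already proved) second condition since $D_0$ has points near $b$, and in the first alternative the inverse maps $\phi_n^{-1}|_{B(b,\eta)}$ form a normal family whose limit analytically continues $\phi_0^{-1}$ across $b$ and sends $b$ into the open disc, so applying $\phi_0$ would place $b$ inside $D_0$ --- a contradiction.

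Conversely, assume Carath\'eodory convergence. I would first show $\{\phi_n\}$ is a normal family. Choosing $b\in\partial D_0$ (nonempty as $D_0\neq\Co$) and boundary points $b_n\to b$ of $D_n$ bounds $\operatorname{dist}(x_n,\partial D_n)\leq|x_n-b_n|$, so Koebe's one-quarter theorem bounds $\phi_n'(0)$ from above; since some fixed ball $B(x_0,2\rho)\subset D_0$ is eventually contained in $D_n$ and $x_n\to x_0$, the Schwarz lemma applied to $\phi_n^{-1}$ on that ball bounds $\phi_n'(0)$ from below. The Koebe growth theorem then bounds $\phi_n$ uniformly on compact subsets of $\Di$, so by Montel some subsequence $\phi_{n_k}$ converges locally uniformly to a map $\phi$ with $\phi(0)=x_0$ and $\phi'(0)=\lim\phi_{n_k}'(0)>0$; in particular $\phi$ is nonconstant, hence univalent by Hurwitz. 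It remains to show $\phi(\Di)=D_0$, for then $\phi=\phi_0$ by uniqueness of the Riemann map, and since every subsequence of $\{\phi_n\}$ has a sub-subsequence converging to $\phi_0$, the whole sequence converges.

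The identification $\phi(\Di)=D_0$ is the heart of the matter and, I expect, the main obstacle. For $\phi(\Di)\subseteq D_0$: were this false, connectedness of $\phi(\Di)$ forces a point $w^*=\phi(z^*)\in\phi(\Di)\cap\partial D_0$, and the argument-principle computation around a small circle about $z^*$ shows $D_{n_k}$ contains a fixed ball about $w^*$ for $k$ large, contradicting the third Carath\'eodory condition at $w^*$. For $D_0\subseteq\phi(\Di)$: given $w_0\in D_0$, the second condition puts a fixed ball $B(w_0,\epsilon)\subset D_n$ eventually, so $\psi_n:=\phi_n^{-1}$ is defined on $B(w_0,\epsilon)$ and bounded by $1$; the key point is that a subsequential limit $g$ of $\{\psi_n\}$ is nonconstant, which I would establish by chaining the Schwarz--Pick lemma along a fixed compact path in $D_0$ from $x_0$ to $w_0$ (an $\epsilon$-neighborhood of which lies in $D_n$ eventually) --- this keeps $\psi_n(w_0)$ at bounded hyperbolic distance from $\psi_n(x_n)=0$, hence away from $\partial\Di$. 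Passing to the limit in $\phi_n(\psi_n(w))=w$ gives $\phi\circ g=\mathrm{id}$ on $B(w_0,\epsilon)$, whence $w_0=\phi(g(w_0))\in\phi(\Di)$. In both directions the recurring difficulty is the same --- controlling the inverse maps, i.e.\ keeping the conformal radius from degenerating --- which is exactly what the lower bound on $\phi_n'(0)$ and the hyperbolic chaining handle.
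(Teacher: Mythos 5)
Your argument is correct and, in outline, complete; the paper gives no proof of its own but simply cites Pommerenke (Theorem 1.8 of \cite{Pom_BoundaryConfMaps}) with the remark that the result rests on controlling geometry by analytic data via the Schwarz lemma and the Koebe quarter theorem --- which is exactly the toolkit you deploy. The two implications, the Koebe/Schwarz bounds on $\phi_n'(0)$ giving normality, and the argument-principle and hyperbolic-chaining steps controlling the inverse maps are the standard ingredients of the referenced proof, so your route coincides with the one the paper defers to.
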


\begin{proof}See e.g. Theorem 1.8 in \cite{Pom_BoundaryConfMaps}. This theorem relies on the fact that it is possible to completely control the geometry with analytic data and vice versa (e.g. by using the Schwarz lemma and the Koebe quarter theorem).
\end{proof}

The Carath\'eodory topology can also be defined on the set of doubly-connected domains with a marked point, using the same geometric description. There is also an analytic point of view, even though the moduli space of doubly-connected domains is non-trivial. As reference domains, we can take the circular annuli $A(0,1,r)=\{z, 1<|z|<r\}$ with marked point $x_0\in (1,r)$ (and we will ask for the uniformizing maps to map inner boundary to inner boundary). A sequence of annular domains $A_n$ converges towards $A_0$ if their moduli $r_n$ converge towards the moduli $r_0$ of $A_0$, if the marked points converge, and if the uniformizing maps $A(0,1,r_n) \rightarrow A_n$ converge towards the uniformizing map $A(0,1,r_0) \rightarrow A_0$, uniformly on compact sets of $A(0,1,r_0)$.

The Carath\'eodory topology can be extended to sets of domains with a marked point $x_0$, carrying additional decoration, for example additional marked interior or boundary points, curves $c$ drawn inside the domain, or a hull\footnote{A hull $K$ is a compact subset $K \subset \overline{D}$, such that $D\sm K$ has the topology of $D$} $K$ not containing $x_0$. Marked points and drawn curves are compared on reference domains via the uniformization maps (using the topology $\Top$ of supremum norm up to reparametrization to compare curves). We compare hulls $K$ using the Carath\'eodory topology for $(D\sm K,x_0)$.

\subsubsection{Domain approximation}

We call a discrete domain of mesh size $\delta$ a connected union of faces of the lattice $\d \Z^2$. We can see them alternatively as open subsets of $\mathbb{C}$ or as graphs. They can also carry decorations living on the lattice $\d \Z^2$.

\begin{defn}\label{disc-approx}A sequence $(D_n, x_n)$ of discrete decorated domains of mesh size $\d_n \rightarrow 0$ is said to be an approximation of a decorated domain $(D, x_0)$ if $(D_n, x_n)$ Carath\'eodory-converges towards $(D, x_0)$.
\end{defn}

There is a natural choice of approximation of mesh size $\d$ for a domain with a marked point, which will allow us to state uniform convergence results.

\begin{defn}\label{natural-approx}
Let us consider a domain $D$, with a marked point $x_0 \in D$. The natural approximation $D^\d$ of $D$ at mesh size $\d$ is the largest discrete domain of mesh size $\d$ included in $D$ and containing $x_0$. In other words, $D^\d$ is the connected component containing $x_0$ of the set of all faces of the graph $\d \Z^2$ that are sitting inside $D$. We approximate marked points by taking the closest\footnote{Once $x_0^\d$ has been chosen, proximity should be measured after having mapped $D$ to its reference domain.} point of $\d \Z^2$. To approximate a simple curve $c$, we take $c^\d$ to be one of the two simple curves living on $\d \Z^2$ that stay the closest possible to $c$ without intersecting it. The natural approximation $K^\delta$ of a hull $K\subset \overline{D}$ is the complement in $D^\delta$ of the natural approximation of $(D\sm K,x_0)$.
\end{defn}

\subsection{Harmonic analysis}

\subsubsection{Discrete harmonic objects}\label{sec:discharmobj}

Consider $\G$ a finite subgraph of $\Z^2$. Let $F$ be a function defined on the vertices of $\G$. We define the discrete partial derivative $\partial F$ on oriented edges $e=xy$ of $\G$ as the difference of the values taken by $F$ at the endpoints: $F(y)-F(x)$.

Let us consider a subset $\partial \G$ of vertices of $\G$ that we call boundary (the complement $\G \sm \partial\G$ of which we call interior vertices), and let us split this boundary in two parts: a Dirichlet boundary $\partial \G_D$, and a Neumann boundary\footnote{We need to have one Neumann vertex for each edge connecting the Neumann boundary to the interior of $\G$. To achieve this, one can modify the graph $\G$ by splitting each Neumann vertex in as many vertices as they are edges connecting it to the interior of $\G$.} $\partial \G_N$. We say that a function $F$ has Dirichlet boundary condition given by $f$ on $\partial \G_D$ if $F=f$ there. When no function $f$ is specified, we always imply $f=0$. We say that $F$ has Neumann boundary condition on $\partial \G_N$ if its derivative $\partial F$ is $0$ on all edges connecting $\partial \G_N$ to the interior of $\G$.

\begin{defn}\label{def:Laplacian}
Let $F$ be a function defined on the interior of $\G$ (and naturally extended to the boundary\footnote{So that it has Dirichlet boundary conditions on $\partial \G_D$ and Neumann boundary conditions on $\partial \G_N$.}). On interior vertices of $\G$, we can define the Laplacian of $F$ on the interior of $\G$ to be
$$
\D F(z) = F(z+1) + F(z+i) + F(z-1) + F(z-i) - 4F(z).
$$
A discrete function $F$ is said to be harmonic if $\D F = 0$.
\end{defn}

Let us now define some harmonic objects on $\G$. For any vertex $x \in \G$, the harmonic measure $\mu_x(.)$ is a probability measure on $\partial \G_D$, or equivalently, a collection of non-negative numbers $(\mu_x(\{y\}))_{y\in\partial\G_D}$, summing to $1$.

\begin{defn}\label{def:disc-Harmonic-measure}
The function $x \mapsto \mu_x(\{y\})$ is the unique harmonic function on $\G$ with Dirichlet boundary condition $0$ on $\partial \G_D \sm\{y\}$, $1$ on $\{y\}$, and Neumann boundary condition on $\partial\G_N$.
\end{defn}

Alternatively, $\mu_x(.)$ is the exit measure of a simple random walk starting from $x$, ``reflected'' on $\partial\G_N$ and stopped upon hitting $\partial \G_D$.

We now fix a distinguished vertex $x_0$ in $\G$.

\begin{defn}\label{def:disc-Poisson-kernel}
For $x\in\G$ and $y \in \partial\G_D$, the Poisson kernel normalized at $x_0$ is the quantity
$$
P^\G_{x_0}(y,x) = \frac{\mu_x(\{y\})}{\mu_{x_0}(\{y\})}.
$$
\end{defn}

Finally, let us consider two disjoint discrete cuts $\alpha^\d$ and $\beta^\d$ in a domain $D^\d$.

\begin{defn}\label{def:harmext-disc}
Given a function $f$ defined on the cut $\a^\d$, we can extend it to a function $F(x)=\sum \mu_x(\{y\}) f(y)$, which is harmonic on the domain $D^\d\setminus\alpha^\d$. We then denote by $\Ha^{\alpha^\d\rightarrow\beta^\d}_{D^\d} : f \mapsto F_{|\beta^\d}$ the operator of harmonic extension from $\alpha^\d$ to $\beta^\d$ in the domain $D^\d$ that maps the function $f$ to the restriction of its harmonic extension $F$ to the cut $\beta^\d$.
\end{defn}

\subsubsection{Continuous harmonic objects}

Consider a Riemann surface equipped with a conformal metric $(\Si,g)$ - an important particular case of this set-up being a domain $(D,|dz|^2)$ of the complex plane equipped with the Euclidean metric. We split the boundary of $\Si$ in a Dirichlet and a Neumann part, $\partial \Si_D$ and $\partial \Si_N$ (such that each one is a finite union of boundary arcs). We are going to consider smooth functions on $\Si$ that continuously extend to the boundary (except possibly at a finite number of points). If the metric is given in local coordinates by $g(z) |dz|^2$, we define the Laplacian to be $\D^g= g^{-1}(z) \D$ , where $\D = \partial^2_{xx} +\partial^2_{yy}$ is the positive Euclidean Laplacian. A harmonic function is a real-valued function $F$ such that $\D^g F=0$.

Until further notice, we now work on a domain $D$ of the complex plane equipped with the Euclidean metric.

\begin{defn}\label{def:Harmonic-conjugate}
Let $F$ be a harmonic function on $D$. Its harmonic conjugate\footnote{The real and imaginary parts of a holomorphic function are harmonic conjugates.} $G$ is locally defined up to an additive constant, as the function that satisfies $\partial_x G = - \partial_y F$ and $\partial_y G = \partial_x F$.
\end{defn}

This allows to make sense of Neumann boundary conditions for a harmonic function $F$, even if the boundary $\partial D_N$ is not smooth (as in \cite{LSW_LERW}): we can require its harmonic conjugate $G$ to extend continuously to, and be constant on (connected components of) $\partial D_N$.

Let us now define some harmonic objects on $D$. For any point $x \in D$, the harmonic measure $\mu_x(.)$ is a probability measure on $\partial D_D$:

\begin{defn}\label{def:cont-Harmonic-measure}
The harmonic measure $\mu_x(.)$ is the exit measure of planar Brownian motion starting at $x$, reflected normally\footnote{The trace of planar Brownian motion being conformally invariant, normally reflected Brownian motion can be defined (up to time-reparametrization) when the boundary is not smooth, by uniformizing the domain.} on $\partial D_N$ and stopped on $\partial D_D$.
\end{defn}

Alternatively, if $I$ is a subarc of $\partial D_D$, $x \mapsto \mu_x(I)$ is the unique bounded harmonic function on $D$ with Dirichlet boundary condition $0$ on $\partial D_D \sm I$, $1$ on $I$, and Neumann boundary condition on $\partial D_N$.

Let us now fix a point $x_0$ in $D$.

\begin{defn}\label{def:cont-Poisson-kernel}
For $(y,x) \in \partial D_D \times D$, the Poisson kernel normalized at $x_0$ is the Radon-Nikodym derivative:
$$
P^D_{x_0}(y,x)=\frac{\text{\normalfont d}\mu_{x}}{\text{\normalfont d}\mu_{x_0}}(y).
$$
\end{defn}

The function $P^D_{x_0}(y,.)$ is harmonic. It is actually the kernel for the Poisson problem: given a continuous function $f(y)$ on $\partial D_D$, the unique bounded harmonic function $F$ on $D$ that has Dirichlet boundary condition $f$ on $\partial D_D$ and Neumann boundary condition on $\partial D_N$ is given by
$$
F(x) = P_{x_0}^D f= \int_{\partial D_D}{P_{x_0}^D(y,x)f(y)\mu_{x_0}(dy)}.
$$
For example, in the upper half-plane $\mathbb{H}$ with full Dirichlet boundary conditions, the Poisson kernel is given by $P_i(0,z)=-\Im(1/z)$.

Finally, let us consider two disjoint cuts $\alpha$ and $\beta$ in a domain $D$. To a continuous function $f$ defined on the cut $\a$, we can associate a function $\Ha^{\alpha\rightarrow\beta}_{D}(f)$ on the cut $\beta$, by first extending $f$ to a harmonic function $F=P_{x_0}^{D\setminus\alpha} f$ on $D\setminus\alpha$ and then restricting $F$ to $\beta$.

\begin{defn}\label{def:harmext}
We denote by $\Ha^{\alpha\rightarrow\beta}_{D} : f \mapsto F_{|\beta}$ the operator of harmonic extension from $\alpha$ to $\beta$ in the domain $D$.

\end{defn}

\subsubsection{Harmonic analysis toolbox}

Let $B_x^\d(r)$ be the approximation of mesh size $\d$ of the ball of radius $r$ centered at a point $x$.

\begin{lem}[Harnack inequality]\label{lem:harnack}
There is an absolute constant $c$ such that for any non-negative discrete harmonic function $f$ defined on the ball $B_x^\d(R)$, and for any point $y\in B_x^\d(r)\subset B_x^\d(R)$, with $r<R/2$, we can bound the increments of $f$:
$$
|f(x)-f(y)| \leq c \frac{r}{R} f(x).
$$

\end{lem}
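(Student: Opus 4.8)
The plan is to treat this as the classical Lipschitz-type bound for discrete harmonic functions and reduce it to two standard inputs. By scale invariance I would set $\d=1$ (so $B_x(r):=B_x^1(r)$), write $B=B_x(R)$, and first record that
$$\sup_{B_x(3R/4)}f\le C_0\,f(x)$$
for some absolute constant $C_0$. This is just the \emph{non-quantitative} discrete Harnack inequality, obtained by a Harnack-chain argument (see e.g.\ \cite{S0}, \cite{LSW_LERW}); alternatively it follows from the harmonic-measure representation $f(z)=\int_{\partial B}f\,\dis\mu^B_z$ on $B$ together with the comparability, up to an absolute multiplicative constant, of the exit distributions $\mu^B_x$ and $\mu^B_z$ for $z\in B_x(3R/4)$.

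The linear dependence on $r/R$ would then come from the discrete gradient (first-difference) estimate: there is an absolute constant $C_1$ such that $|g(z')-g(z)|\le (C_1/s)\,\sup_{B_z(s)}|g|$ for every function $g$ that is discrete harmonic on a ball $B_z(s)$ and every neighbour $z'$ of $z$. Granting this, fix any $z$ with $|z-x|\le r<R/2$; then $B_z(R/4)\subset B_x(3R/4)\subset B$, so $f$ is harmonic on $B_z(R/4)$ and, since $f\ge 0$,
$$|f(z')-f(z)|\le \frac{4C_1}{R}\sup_{B_z(R/4)}f\le \frac{4C_1}{R}\sup_{B_x(3R/4)}f\le \frac{4C_1C_0}{R}\,f(x)$$
for every neighbour $z'$ of $z$. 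I would then join $x$ to $y$ by a monotone nearest-neighbour lattice path staying within distance $r$ of $x$ (e.g.\ inside the axis-parallel rectangle with opposite corners $x$ and $y$), which has at most $2r$ edges, and sum the increments of $f$ along it: $|f(x)-f(y)|\le 2r\cdot \frac{4C_1C_0}{R}\,f(x)=8\,C_1C_0\,(r/R)\,f(x)$, which is the assertion with $c=8C_1C_0$.

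The only genuine obstacle is the gradient estimate with its \emph{sharp}, i.e.\ linear rather than merely H\"older, dependence on the scale: iterating the ordinary Harnack inequality as in the De~Giorgi--Nash--Moser argument only produces $\mathrm{osc}_{B_x(r)}f\le C\,(r/R)^{\a}\,f(x)$ for some exponent $\a\in(0,1)$, which is too weak here. I would simply quote the gradient estimate from the literature, where it is obtained by a last-exit decomposition of the discrete Poisson kernel of a disc (equivalently, by coupling two nearest-neighbour walks started at adjacent vertices). Two minor remarks: when $\d$ is not small compared with $R$ the path in the last step has only $O(1)$ edges and the inequality is immediate, so no regularity of $\partial B_x^\d(R)$ is needed; and for small $\d$ all the constants above are genuinely absolute because $\d\Z^2$ is merely a rescaling of $\Z^2$.
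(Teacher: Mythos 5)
Your argument is correct, but it is worth noting that the paper does not actually prove this lemma: it simply cites a stronger statement (Proposition 2.7~(ii) of \cite{SmiChe_isoradial}, which is a uniform Harnack/Lipschitz estimate valid on general isoradial graphs). What you have written is the standard self-assembled proof on $\Z^2$: a qualitative Harnack bound $\sup_{B_x(3R/4)}f\le C_0 f(x)$, the discrete gradient estimate $|g(z')-g(z)|\le (C_1/s)\sup_{B_z(s)}|g|$ (Lawler's last-exit/coupling argument, e.g.\ Theorem 6.3.8 in Lawler--Limic), and a telescoping sum over a monotone lattice path of $O(r/\d)$ edges staying in $B_x(r)$, where each vertex $z$ of the path satisfies $B_z(R/4)\subset B_x(3R/4)\subset B_x(R)$. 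The inclusions and the bookkeeping of constants all check out, and you are right to insist that the linear (rather than H\"older) dependence on $r/R$ cannot come from iterating Harnack alone and must be imported via the gradient estimate --- that is precisely the nonstandard input, and quoting it is as legitimate as the paper's wholesale citation. The only trade-off is scope: your route is tied to $\Z^2$ (which is all this paper needs, since it only works on the square lattice), whereas the Chelkak--Smirnov reference covers isoradial graphs; on the other hand your version is essentially self-contained and makes the mechanism visible. One small point of care: state the gradient estimate with $\sup|g|$ on the right (as in the references) so that it applies before you invoke non-negativity --- here $f\ge0$ so it is harmless, but the two hypotheses play different roles and should not be conflated.
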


\begin{proof}
See e.g. Proposition 2.7 (ii) in \cite{SmiChe_isoradial} for a stronger estimate.
\end{proof}

\begin{lem}[Beurling estimate]\label{lem:beurling}
Consider, on a discrete domain $D$ of mesh size $\d$, a harmonic function $f$ bounded by $1$ that has $0$ Dirichlet boundary conditions on some boundary arc $\A$. For any point $x\in D$, call $\e$ its distance to $\A$, and $d$ its distance to the rest of the boundary $\partial D \sm \A$. There is an absolute constant $\beta>0$ such that $f(x) = O((\e/d)^\beta)$, uniformly on Carath\'eodory-compact sets of decorated domains, and in $\d$. 
\end{lem}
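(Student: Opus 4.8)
The plan is to reduce the bound to the classical discrete Beurling estimate for simple random walk (Beurling's projection theorem), which comes with absolute constants, and to transport it through the probabilistic representation of $f$. Concretely, let $X$ be the simple random walk started at $x$, reflected on $\partial D_N$, and killed at the first hitting time $\tau$ of $\partial D_D$. Comparing $f$ with $y\mapsto\E_y[f(X_\tau)]$ — both are harmonic in the interior, satisfy Neumann conditions on $\partial D_N$, and coincide on $\partial D_D$ — gives $f(x)=\E_x[f(X_\tau)]$; since $|f|\le 1$ everywhere and $f$ vanishes on $\A\subset\partial D_D$, this already yields $|f(x)|\le\P_x[\,X_\tau\in\partial D\sm\A\,]$.

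Next I would localize to the ball $B_x(d)$. By definition of $d$ the set $\partial D\sm\A$, and in particular the whole Neumann boundary $\partial D_N$, lies at distance $\ge d$ from $x$, so until the first time $\sigma$ that $|X-x|\ge d$ the walk never reflects and agrees in law with an ordinary simple random walk $S$ from $x$; on the event $\{X_\tau\in\partial D\sm\A\}$ one has $\sigma\le\tau$ and $X$ has avoided $\A$ on $[0,\sigma]$, whence $|f(x)|\le\P_x[\,S\text{ exits }B_x(d)\text{ before hitting }\A\,]$. The geometric point that makes Beurling applicable is that $\A$, being a boundary \emph{arc}, has two endpoints lying in $\overline{\partial D\sm\A}$ and hence at distance $\ge d$ from $x$, while $\A$ also meets the circle of radius $\e=\mathrm{dist}(x,\A)$; being connected, $\A$ therefore contains, for every $k$ with $2^{k+1}\e\le d$, a connected lattice path crossing the dyadic annulus $\{2^k\e\le|z-x|\le 2^{k+1}\e\}$ from its inner to its outer circle. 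Invoking the one-scale crossing estimate underlying Beurling's theorem — there is an absolute $q<1$ such that, from any point of the inner circle, $S$ reaches the outer circle of such an annulus without touching a connected crossing set with probability at most $q$ — and iterating over the $\asymp\log_2(d/\e)$ scales by the strong Markov property, I obtain $\P_x[\,S\text{ exits }B_x(d)\text{ before hitting }\A\,]=O\big((\e/d)^{\beta}\big)$ with $\beta=\log_2(1/q)>0$.

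The uniformity is then automatic: the exponent $\beta$ and the implied constant furnished by Beurling's theorem are absolute — independent of the domain, of any decoration, and of $\d$ (and since $x$ and $\A$ live on $\d\Z^2$ one always has $\e\ge\d$, so the discrete estimate is never used below the lattice scale); Carath\'eodory compactness intervenes only if one wishes to read $\e$ and $d$ off a uniformizing coordinate, in which case the Koebe distortion bounds — uniform on Carath\'eodory-compact families — change the ratio $\e/d$ by at most a bounded factor, absorbed into the constant. The only genuinely nontrivial ingredient is the one-scale crossing estimate, where the planar topology of the connected crossing piece of $\A$ is exactly what is used; the probabilistic representation, the localization inside $B_x(d)$, and the remark about the endpoints of the arc are all routine.
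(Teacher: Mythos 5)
Your argument is correct. For this lemma the paper offers no proof of its own --- it simply cites Proposition 2.11 of Chelkak--Smirnov --- so there is nothing to compare at the level of technique; what you wrote is essentially the standard derivation of the (weak) discrete Beurling estimate that underlies the cited result: reduce to the probability that the walk escapes to distance $d$ without hitting $\A$, observe that the connected arc $\A$ crosses every dyadic annulus between radii $\e$ and $d$ centered at $x$, and iterate a one-scale crossing bound with an absolute constant $q<1$. The uniformity in $\d$ and over Carath\'eodory-compact families is indeed automatic since every constant is absolute; the aside about Koebe distortion is unnecessary because $\e$ and $d$ are Euclidean distances in the statement.

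One small streamlining worth making: your opening step identifies $f$ with $y\mapsto\E_y[f(X_\tau)]$ for the globally defined reflected/stopped walk, which tacitly assumes $f$ carries Neumann conditions on all of $\partial D\sm\A$; the lemma only hypothesizes that $f$ is harmonic, bounded by $1$, and vanishes on $\A$. You can avoid this by running the optional stopping argument directly in $D\cap B_x(d)$: stop an ordinary simple random walk at the first time $\sigma$ it hits $\A$ or leaves $B_x(d)$ (no other boundary of $D$ is reachable before then, by definition of $d$), so that $f(x)=\E_x[f(S_\sigma)]\leq\P_x[S\text{ exits }B_x(d)\text{ before hitting }\A]$ with no assumption on the behaviour of $f$ on $\partial D\sm\A$ beyond $|f|\leq 1$. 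The rest of your proof then applies verbatim.
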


\begin{proof}
See e.g. Proposition 2.11 in \cite{SmiChe_isoradial}.
\end{proof}

\begin{lem}\label{lem:poisson-kernel-bounded}
The discrete Poisson kernel is uniformly bounded away from its boundary singularity, on Carath\'eodory-compact sets of decorated domains.
\end{lem}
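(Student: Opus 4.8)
\emph{Proof proposal.} The plan is to reduce the estimate to a uniform bound on one non-negative discrete-harmonic function and then play the interior Harnack inequality against the Beurling estimate. Fix a Carath\'eodory-compact family $\F$ of decorated domains and an $\e>0$; we must produce $C=C(\F,\e)$ so that, for all $(D,x_0)\in\F$, all meshes $\d$, all $y\in\partial\G_D$ and all $x\in\G$ with $\mathrm{dist}(x,y)\ge\e$ (where $\G=D^\d$), one has $P^\G_{x_0}(y,x)\le C$. Since $\{x:\mathrm{dist}(x,y)\ge\e\}$ shrinks as $\e$ increases, it suffices to prove the bound for all $\e$ below a threshold depending only on $\F$ (controlling the uniform $\C^1$ modulus of the boundaries and the depth $d_0:=\inf_\F\mathrm{dist}(x_0,\partial D)>0$; in particular $\e<d_0$). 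Put $h:=P^\G_{x_0}(y,\cdot)=\mu_\cdot(\{y\})/\mu_{x_0}(\{y\})$; by Definition~\ref{def:disc-Harmonic-measure}, $h$ is non-negative and discrete-harmonic on $\G$, vanishes on $\partial\G_D\sm\{y\}$, obeys the Neumann condition on $\partial\G_N$, and $h(x_0)=1$. Reflecting across the Neumann arcs — these are $\C^1$, hence analytic in a uniformizing chart, so the reflection is available at a scale uniform over $\F$, and concretely one reflects the random walk rather than the graph — erases $\partial\G_N$ at the cost of a mirror Dirichlet point $y'$ (the only other boundary point where $h\ne0$); since $\G$ lies on one side of the reflection, $\mathrm{dist}(x,\{y,y'\})\ge\e$ still holds for every $x\in\G$ (exactly so across a straight line, hence up to an error $<\e/4$ across a $\C^1$ arc once $\e$ is small). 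So it suffices to bound, uniformly over $\F$, a non-negative harmonic function normalized at the uniformly-deep point $x_0$ and vanishing on its whole Dirichlet boundary off the two $\e$-far points $y,y'$. I treat $y$; the argument at $y'$ is identical.

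\emph{Step 1: reduction to a sphere.} Let $S=\{z:\mathrm{dist}(z,y)=\e/2\}$ and let $S'$ be the corresponding sphere about $y'$. Granting $h\le C_1$ on $S\cup S'$: on the complement of $B_y(\e/2)\cup B_{y'}(\e/2)$ the function $h$ is harmonic, vanishes on the Dirichlet boundary, and is at most $C_1$ on the two inner spheres, so by the maximum principle $h\le C_1$ there — and that region contains $\{x\in\G:\mathrm{dist}(x,y)\ge\e\}$. Hence everything comes down to the bound $h\le C_1$ on $S$.

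\emph{Step 2: bounding $h$ on $S$.} Given $z\in S$, I would choose an interior reference point $q=q(z)$ with $\mathrm{dist}(q,\partial\G)\asymp\e$ and $\mathrm{dist}(q,\{y,y'\})\asymp\e$: if $\mathrm{dist}(z,\partial\G)\ge\e/8$, take $q=z$; otherwise take $q=p+(\e/8)\nu$, with $p$ a nearest boundary point and $\nu$ the inward unit normal, so that the uniform $\C^1$ modulus gives $\mathrm{dist}(q,\partial\G)\asymp\e/8$ while $\mathrm{dist}(q,\{y,y'\})\ge\mathrm{dist}(z,y)-|z-q|\ge\e/4$. I then claim $h(q)\le C_1$ and $h(z)\le C_2\,h(q)$. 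The first is Harnack chaining: walk from $q$ inward to depth $\asymp\e$ and then across the bulk to $x_0$, along a path that stays at distance of order $\e$ from $\{y,y'\}$ and from $\partial\G$ and whose combinatorial length is controlled by $\F$ and $\e$ (Carath\'eodory-compactness makes the relevant compact part of the reference domain uniformly connected at scale $\e$, and the $\C^1$ hypothesis rules out pinching there), and then apply Lemma~\ref{lem:harnack} repeatedly along it. For the second: if $q=z$ it is trivial; if $q\ne z$, then $z$ is near $\partial\G$, hence near $\partial\G_D\sm\{y\}$ since $\mathrm{dist}(z,y)=\e/2$, and as $y,y'\notin B_z(\e/4)$ the function $h$ is harmonic on $\G\cap B_z(\e/4)$ and vanishes on the single boundary arc $\partial\G\cap B_z(\e/4)$, which by the uniform $\C^1$ modulus makes $\G\cap B_z(\e/4)$ a uniformly nice Lipschitz domain; the discrete boundary Harnack (Carleson) estimate — with constant depending only on the uniform Lipschitz character, and itself a consequence of Lemmas~\ref{lem:harnack} and~\ref{lem:beurling} — then yields $h(z)\le C_2\,h(q)$. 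Combining, $h\le C_1\max(1,C_2)$ on $S$, which closes Step 2 and the lemma.

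\emph{Main obstacle.} The delicate point is the last, boundary-Harnack step. Near the Dirichlet boundary but away from the singularity, the trivial bound $\mu_\cdot(\{y\})\le1$ is worthless because the normalizing mass $\mu_{x_0}(\{y\})$ tends to $0$ with the mesh, so one genuinely needs to compare $h$ at a near-boundary point with its value at a nearby interior point; this is exactly where the $\C^1$ regularity of $\partial D$ enters — the same reason Schramm imposes it (see the footnote near Lemma~\ref{lem:charact-cv-expl}) — to keep the Carleson constant uniform across $\F$. A quicker but less self-contained route would invoke the known convergence of the discrete Poisson kernel to the continuous one together with continuity of the latter off the diagonal; carrying out the argument directly, as above, keeps this subsection's toolbox closed.
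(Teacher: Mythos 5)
Your architecture --- maximum principle outside a small sphere around the singularity, interior Harnack chaining back to $x_0$, and a boundary Harnack (Carleson) estimate to pass from near-boundary points of that sphere to interior points at scale $\e$ --- is the standard ``local study of the singularity,'' and it is essentially what the paper delegates wholesale to the proof of Theorem 3.13 of Chelkak--Smirnov; the paper's own proof is nothing but that citation plus the assertion that the argument survives Neumann boundary and a second boundary component. So you are supplying the detail where the paper supplies a reference, which is legitimate. One caveat: the Carleson estimate is named but not proved. It does follow from Lemmas \ref{lem:harnack} and \ref{lem:beurling}, but only via the usual iteration (a large value of $h$ near the Dirichlet arc forces, through the Beurling decay, a much larger value at a point closer to the boundary, which after finitely many steps contradicts the subpolynomial growth supplied by Harnack chains of controlled length); that iteration is the actual content of the step and deserves at least a sentence, since the trivial bound $\mu_\cdot(\{y\})\le 1$ is, as you say, useless here.

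The one step that fails as written is the reduction ``reflect across the Neumann arcs to erase $\partial\G_N$ at the cost of a mirror Dirichlet point $y'$.'' The Neumann arcs to which this lemma is actually applied are the free sides of exploration hulls $K^\d$ and the loop $\l^\d$ --- rough lattice paths with no $\C^1$ regularity and no lattice reflection; and even where a combinatorial double (two copies of $\G$ glued along $\partial\G_N$) makes sense, the double is not a subgraph of $\Z^2$, so Lemmas \ref{lem:harnack} and \ref{lem:beurling} as stated do not apply to it near the gluing locus. Reflecting ``the walk rather than the graph'' is simply the definition of the Neumann condition and produces no mirror point $y'$. Fortunately the reflection is unnecessary: in every application the singularity $y$ lies on a $\C^1$ Dirichlet arc (a cut $c$, $d$, or $\partial H$) at a definite distance from the Neumann boundary, so your sphere $S$ and the Carleson estimate never meet $\partial\G_N$; the maximum principle of Step 1 tolerates Neumann boundary as is (a non-constant discrete harmonic function whose derivative vanishes on edges into $\partial\G_N$ attains its maximum on $\partial\G_D$); and the Harnack chain from $q$ to $x_0$ can be routed through the region bounded by the cuts, away from the Neumann arcs. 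With the reflection deleted and the Carleson iteration spelled out, the argument is sound.
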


\begin{proof}
See the proof of Theorem 3.13 in \cite{SmiChe_isoradial} for the case of simply-connected domains with full Dirichlet boundary. The core of the argument is a local study of the singularity, and carries through for doubly-connected domains, as well as when there is a non-trivial Neumann boundary.
\end{proof}

\begin{lem}[\cite{LSW_LERW}, Proposition 4.2]\label{lem:harm-meas-conv}
Let us consider a simply-connected domain $(D,x_0)$, with two disjoint boundary arcs $\A_1$ and $\A_2$ that are not both empty. Let $\B = \partial D \sm (\A_1\cup \A_2)$. We also consider the natural approximation of this setting.
The discrete harmonic measure of $\A^\d_1$ with Neumann boundary conditions on $\B^\d$ seen from $x^\d_0$ converges towards its continuous counterpart, uniformly on Carath\'eodory-compact sets of such decorated domains.
\end{lem}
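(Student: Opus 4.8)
I would argue by compactness, proving convergence first along a single Carath\'eodory-convergent sequence of decorated domains and then deducing the uniformity. Suppose the uniform statement failed: there would be $\e_0>0$ and a sequence of decorated domains $(D_n,x_n,\A_{1,n},\A_{2,n})$ ranging in a fixed Carath\'eodory-compact family, with mesh sizes $\d_n\to 0$, along which the discrete harmonic measure of $\A_{1,n}^{\d_n}$ seen from $x_n^{\d_n}$ stays at distance $\ge\e_0$ from the continuous harmonic measure $\mu_{x_n}(\A_{1,n})$. Extracting a subsequence, $(D_n,x_n,\A_{1,n},\A_{2,n})$ Carath\'eodory-converges to some $(D,x_0,\A_1,\A_2)$; since the continuous harmonic measure of an arc (with Neumann conditions on the complementary boundary) is a conformal invariant --- the conformal invariance of the trace of normally reflected Brownian motion --- and the uniformizing maps converge locally uniformly (Carath\'eodory's kernel theorem), one has $\mu_{x_n}(\A_{1,n})\to\mu_{x_0}(\A_1)$. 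So it is enough to show the discrete quantities also converge to $\mu_{x_0}(\A_1)$, contradicting the choice of $\e_0$. Hence I now fix such a sequence and write $u^\d$ for the discrete harmonic measure of $\A_1^\d$, i.e.\ the discrete harmonic function on $D^\d\sm\A_1^\d$ equal to $1$ on $\A_1^\d$, to $0$ on $\A_2^\d$, with Neumann conditions on $\B^\d$; the goal is $u^\d(x_0^\d)\to\mu_{x_0}(\A_1)$.

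\emph{Precompactness and reduction to boundary values.} Both $u^\d$ and $1-u^\d$ are non-negative discrete harmonic functions on $D^\d$ bounded by $1$, so the Harnack inequality (Lemma \ref{lem:harnack}) yields, on each compact $K\Subset D$ and for $\d$ small, a Lipschitz bound on $u^\d$ with constant depending only on $\mathrm{dist}(K,\partial D)$. By Arzel\`a--Ascoli and a diagonal extraction, a subsequence of $u^\d$ converges uniformly on compact subsets of $D$ to a continuous $u\colon D\to[0,1]$, which is harmonic (standard: pass to the limit in the discrete mean-value property). By uniqueness of the bounded harmonic function on $D$ with Dirichlet datum $\ind_{\A_1}$ on $\A_1\cup\A_2$ and Neumann condition on $\B$, it suffices to check that $u$ attains these boundary values; then $u=\mu_\cdot(\A_1)$, and using $x_0^\d\to x_0$ together with the equicontinuity of $u^\d$ near $x_0$ we get $u^\d(x_0^\d)\to u(x_0)=\mu_{x_0}(\A_1)$.

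\emph{The Dirichlet arcs.} Let $z$ be a point in the relative interior of $\A_1$, so that $\rho:=\mathrm{dist}(z,\A_2\cup\B)>0$. Applying the Beurling estimate (Lemma \ref{lem:beurling}) to $1-u^\d$, which has vanishing Dirichlet condition on $\A_1$, gives $1-u^\d(x)=O\big((\mathrm{dist}(x,\A_1)/\rho)^\beta\big)$ uniformly in $\d$ for $x$ near $z$; hence $u^\d(x)\to 1$ as $x\to z$, uniformly in $\d$, so $u(z)=1$. Symmetrically, $u\equiv 0$ on the relative interior of $\A_2$. The finitely many endpoints of the arcs carry no harmonic measure and may be disregarded.

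\emph{The Neumann arc --- the main obstacle.} The delicate point is to show that $u$ satisfies the Neumann condition on $\B$ --- i.e.\ that its harmonic conjugate extends continuously to $\B$ and is constant on each component (Definition \ref{def:Harmonic-conjugate}) --- because the fixed lattice $\d\Z^2$ does not interact cleanly with a curved reflecting boundary. I would exploit the standing $\C^1$-regularity hypothesis on $\partial D$: near a relative interior point of $\B$, conformally straighten a boundary neighbourhood and then double $D$ across $\B$ by reflection. The discrete Neumann condition satisfied by $u^\d$ on $\B^\d$ is precisely what makes the reflected extension $\widehat u^\d$ of $u^\d$ discrete harmonic across $\B^\d$ in the doubled domain $\widehat D^\d$ (up to a harmless deformation of the lattice near $\B$, absorbed by the robustness of the random walk convergence), so $\widehat u^\d$ is the discrete harmonic measure of $\A_1^\d$ together with its mirror image in $\widehat D^\d$ with \emph{full} Dirichlet boundary conditions. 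The convergence of the latter is the simply-connected full-Dirichlet case, which follows from Harnack and Beurling alone by the argument of the previous two steps; its limit $\widehat u$ is symmetric across $\B$ by uniqueness, so $u=\widehat u|_D$ has vanishing normal derivative on $\B$, as required. A variant adapted to merely piecewise-$\C^1$ boundary is to run the same precompactness argument on the discrete harmonic conjugate $v^\d$ of $u^\d$ on the dual graph --- it is locally constant along $\B^\d$, hence obeys a Dirichlet-type condition there --- and to identify its limit with the harmonic conjugate of $u$. Once $u$ is pinned down on $\A_1$, $\A_2$ and $\B$, the proof is complete; all inputs used (Harnack, Beurling, and the lower bound $\rho$) are uniform on Carath\'eodory-compact families, so the uniformity claimed in the lemma follows from the compactness reduction of the first step.
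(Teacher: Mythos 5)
The paper does not actually prove this lemma: it is imported wholesale from Lawler--Schramm--Werner (\cite{LSW_LERW}, Proposition 4.2), so the relevant comparison is with their argument. Your overall architecture --- the compactness reduction to a single Carath\'eodory-convergent sequence, Harnack precompactness and extraction of a harmonic limit, identification of that limit by its boundary behaviour, Beurling on the Dirichlet arcs --- is the standard one and matches theirs. The problem is the step you yourself flag as the main obstacle.

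The reflection argument for the Neumann arc does not work as stated. You propose to conformally straighten a neighbourhood of $\B$ and then reflect; but the conformal map destroys the lattice, so the straightened $u^\d$ is no longer a discrete harmonic function on a subgraph of $\d\Z^2$, and its reflected extension $\widehat u^\d$ is not discrete harmonic on any graph to which the full-Dirichlet convergence result applies. If instead you reflect in the original lattice across the staircase path $\B^\d$, the extension fails to be discrete harmonic at the corner vertices of the staircase, and such corners have positive density along any non-axis-parallel $\C^1$ arc. The parenthetical ``harmless deformation of the lattice near $\B$, absorbed by the robustness of the random walk convergence'' is precisely the nontrivial content that would need to be proved; no off-the-shelf robustness statement covers a reflected staircase. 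The correct route --- and the one \cite{LSW_LERW} actually take --- is the one you relegate to a ``variant'': work with the pair $(u^\d,v^\d)$ where $v^\d$ is the discrete harmonic conjugate on the dual graph, observe that $v^\d$ is constant along each component of $\B^\d$ (a Dirichlet-type condition to which Harnack and Beurling do apply), prove precompactness of the pair, and identify the limit $(u,v)$ as a conjugate pair, so that constancy of $v$ on $\B$ encodes the Neumann condition for $u$ exactly in the sense of Definition \ref{def:Harmonic-conjugate}. As written, your proposal mentions this only as an aside and does not carry out the key identification (that the discrete Cauchy--Riemann relations survive the passage to the limit), so the Neumann step remains a genuine gap.
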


\begin{rem}\label{rem:harm-meas-conv-bound}
Using the Beurling estimate, we can see that the above convergence is actually uniform in $x_0 \in D$ staying away from $\partial D \sm \A_2$
\end{rem}

\begin{rem}\label{rem:harm-meas-conv-2-conn}
The result of Lemma \ref{lem:harm-meas-conv} can be easily extended to doubly-connected domains in two special cases: when $\B$ either consists of a whole boundary component, or when $\B$ is empty and $\A_2$ is a union of boundary arcs. Indeed, when $\B$ is empty, the arguments given in the proof of \cite{SmiChe_isoradial},Theorem 3.12 will apply. When $\B$ is a whole boundary component, harmonic conjugates are single-valued, and the proof in \cite{LSW_LERW} carries through.
\end{rem}

\subsection{Determinants and loop measures}

For our purposes, it will be useful to rewrite some expressions involving the determinant of the discrete Laplacian, in a way that easily allows to take scaling limits.
These determinants are related to probabilistic objects, namely loop measures, that we will use only peripherally in this paper (we refer to \cite{LeJan_Loops} and \cite{LW} for precise definitions of these loop measures).

\subsubsection{Loop measures}

To a symmetric Markov process on a finite space $\G$ (e.g. the simple random walk on a finite subgraph of $\Z^2$, with mixed stopped/reflected boundary conditions), one can associate a natural measure $\mu^{\lo}$ on loops (closed paths) living on $\G$ (see Section 2.1 in \cite{LeJan_Loops}).

Let $\G$ be a subgraph of $\Z^2$, and consider the loop measure $\mu^{\lo}_{\G}$ associated to the simple random walk on $\G$. We have the following expression for the total mass of loops.

\begin{lem}[\cite{LeJan_Loops}, Equation 2.5]\label{lem:det-Lap-loop-meas}
$|\mu^{\lo}_{\G}| = -\log\det(\D_{\G})$.
\end{lem}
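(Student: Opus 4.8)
The plan is to reduce the identity to the classical generating-function formula for traces of powers of the random-walk kernel. Write $P$ for the sub-Markovian transition matrix of the simple random walk on $\G$ killed on $\partial\G_D$ and reflected on $\partial\G_N$ (performing the vertex-splitting of the footnote to Definition~\ref{def:Laplacian} so that the walk is reversible). If $\partial\G_D=\emptyset$ then $\D_\G$ is singular and $P$ is stochastic, so both sides of the asserted identity equal $+\infty$ and there is nothing to do; so I assume $\partial\G_D\neq\emptyset$, in which case (as $\G$ is connected) the killed walk dies almost surely, $P^n\to 0$, and the spectrum of $P$ lies in $(-1,1)$. Recall from \cite{LeJan_Loops} that $\mu^{\lo}_\G$ is the image under the root-forgetting map of $\tfrac1n$ times the rooted-loop measure, which gives a nearest-neighbour loop $(x_0,\dots,x_n=x_0)$ of length $n\geq1$ the weight $\prod_{i=0}^{n-1}P_{x_i,x_{i+1}}$.

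First I would compute the total mass directly. Summing the pushforward over all unrooted loops equals summing $\tfrac1n$ times the weight over all rooted loops --- this is exactly the role of the factor $\tfrac1n$, so no subtlety about minimal periods of loops survives --- and for fixed length $n$ the total weight of rooted loops is $\sum_{x\in\G}(P^n)_{x,x}=\Tr(P^n)$. Hence
$$
|\mu^{\lo}_\G| \;=\; \sum_{n=1}^\infty \frac1n\,\Tr(P^n)\,,
$$
a series that converges absolutely because every eigenvalue of $P$ has modulus $<1$. Applying $-\log(1-\lambda)=\sum_{n\geq1}\lambda^n/n$ to each eigenvalue $\lambda$ and using $\Tr\log=\log\det$ gives
$$
\sum_{n=1}^\infty \frac1n\,\Tr(P^n) \;=\; -\Tr\log(I-P) \;=\; -\log\det(I-P)\,.
$$
Finally, on $\G\subset\Z^2$ every interior vertex has degree $4$, so by Definition~\ref{def:Laplacian} one has $-\D_\G=4(I-P)$ as operators on functions carrying the stated boundary conditions; reading $\det(\D_\G)$ in the normalization for which the Laplacian is the walk generator $I-P$ (the factor $4^{\#\{\text{interior vertices of }\G\}}$ between the two conventions is harmless, as only ratios of such determinants enter the paper) yields $|\mu^{\lo}_\G|=-\log\det(\D_\G)$.

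I do not expect a genuine obstacle here: the proof is the generating-function manipulation above. The two points that call for a little care are the passage from the a priori period-sensitive measure on unrooted loops to a plain sum over rooted loops, which the $\tfrac1n$ weight handles automatically, and checking that $P$ has spectral radius strictly below $1$ --- this is where the hypothesis $\partial\G_D\neq\emptyset$ is used, and it is precisely what makes the loop-series, the logarithm, and the determinant all finite at once.
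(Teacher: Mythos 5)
Your argument is correct and is exactly the standard derivation underlying the citation: the paper offers no proof of this lemma, deferring to \cite{LeJan_Loops}, and the identity there is obtained precisely by your computation $|\mu^{\lo}_\G|=\sum_{n\geq1}\tfrac1n\Tr(P^n)=-\log\det(I-P)$, valid because the Dirichlet part of the boundary forces the spectral radius of $P$ below $1$. You are also right to flag that with the paper's normalization $\D_\G=-4(I-P)$ the stated equality only holds up to the factor $(\pm4)^{\#\{\text{interior vertices}\}}$, which is indeed immaterial since the paper only ever uses alternating ratios of such determinants in which these factors cancel.
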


These loop measures have a natural equivalent in the continuous setting: a loop measure $\mu_D^{\lo}$ associated to Brownian motion can be defined in any domain $D$ of the complex plane (see Section 4 of \cite{LW}).

We will now discuss two different notions of determinants for certain infinite-dimensional linear operators.

\subsubsection{Fredholm determinant}

Let $\T$ be an integral kernel operator on the function space $\mathcal{L}^2([0,a],dx)$, i.e. an endomorphism of this function space of the form $\T f(y)=\int_{[0,a]} \T(y,x)f(x)dx$ for some bicontinuous function $\T$.

\begin{defn}
The Fredholm determinant of $\Id + \T$ is:
$$
{\det}_F(\Id+\T) = \sum_n \frac{1}{n!} \int_{x_1,\cdots,x_n} \det\left([\T(x_i,x_j)]_{1\leq i,j\leq n}\right) \ dx_1 \cdots dx_n.
$$
\end{defn}

In Chapter 3 of \cite{SimTrace}, it is explained why this gives a reasonable generalization of the notion of determinant (e.g. it encodes information on the invertibility of the operator $\Id+\T$).

\subsubsection{$\zeta$-regularized determinant}

Let us first give an overview of $\zeta$-regularization before getting into details.

Suppose we have a countable Hilbert basis of eigenvectors corresponding to regularly increasing positive eigenvalues $\lambda_i$ of an operator $L$ (e.g. $L$ is the positive Laplacian on a Riemann surface $(\Si,g)$ with a smooth metric and with a non-trivial Dirichlet boundary). We can then build the zeta function $\zeta_L(s) = \sum_i \lambda_i^{-s}$. This series converges when $\Re(s)$ is large enough to a function that admits a meromorphic extension to the whole complex plane, which is moreover holomorphic near the origin.

\begin{defn}\label{def:zetadet}
The $\zeta$-regularized determinant of $L$ is $\det_\z(L)= e^{-\z_L'(0)}$.
\end{defn}

Note that this definition gives the usual determinant when $L$ is a finite-dimensional operator.
On the other hand, the quantity $-\log\det_\z(\D_{D})$ can be interpreted (by analogy with Lemma \ref{lem:det-Lap-loop-meas}) as a regularization of the total mass of Brownian loops on the domain $D$.

\begin{rem}\label{def:zetadetprime}
If $L$ has a zero eigenvalue (e.g. $L$ is the positive Laplacian on a Riemann surface $(\Si,g)$ with a smooth metric and without Dirichlet boundary), we can define $\det'_\z(L)$ in a similar way by ignoring the zero eigenvalue in the series defining the zeta function $\zeta_L(s)$.
\end{rem}

As there can be no ambiguity, we will in the following indiscriminately use $\det_\z$ to denote either $\det_\z$ or $\det'_\z$, depending on whether the surface under consideration has a non-trivial Dirichlet boundary.

Let us now give some more details (we assume that $L$ is some Laplacian operator). We mainly refer to \cite{BerGetVer_HeatKernels}. It is actually easier to define $\zeta_L(s)$ as the Mellin transform of the trace of the heat kernel\footnote{The heat kernel $e^{-tL}$ is the fundamental solution to the heat equation $\frac{\partial}{\partial t} + L = 0$. The heat kernel of the Laplacian is trace class (\cite{BerGetVer_HeatKernels}, Proposition 2.32), i.e. it is sufficiently well-behaved so that its trace can be defined unambiguously.} $e^{-tL}$, namely: $\zeta_L(s)=\Me[\Tr(e^{-tL})](s)$ where the Mellin transform of a function $f$ defined on the positive real line is given by the integral formula:
$$
\Me[f](s)=\frac{1}{\Gamma(s)}\int_0^{\infty} f(t) t^{s-1} dt.
$$

To relate this to the function $\zeta_L$ discussed above, note that the Mellin transform of an exponential is given by $\Me[e^{-t\lambda}](s)=\lambda^{-s}$, by definition of the $\Gamma$ function. Summing over eigenvalues of the Laplacian (formally) gives:
$$
\Me[\Tr(e^{-tL})](s) = \sum_i \Me[e^{-t\lambda_i}](s) = \sum_i \lambda_i^{-s} = \zeta_L(s).
$$
If $s$ is of real part large enough, Weyl's asymptotics for the eigenvalues of the Laplacian (\cite{BerGetVer_HeatKernels}, Corollary 2.43) ensures that the above series converges fast enough, so that the computation rigorously holds.
If $L$ admits zero as an eigenvalue, we only want to sum over positive eigenvalues, and so we should correspondingly consider the Mellin transform of the trace of $P_{(0,\infty)}e^{-tL}$ where $P_{(0,\infty)}$ is the orthogonal projection on the space generated by the eigenfunctions corresponding to positive eigenvalues.

General properties of the Mellin transform (\cite{BerGetVer_HeatKernels}, Lemma 9.34) show that if a function $f$ has a nice short-time asymptotics (i.e. if there is an integer $n$ such that $f$ behaves for small times $t$ like $\sum_{k\geq -n} f_k t^{k/2}$) and is also well-behaved at $\infty$ (i.e. $f$ decays exponentially fast), the Mellin transform $\Me[f](s)$, a priori only well-defined for numbers $s$ of large enough real part, actually extends to a meromorphic function of the whole plane, which is moreover holomorphic at $0$.

We can hence ensure our definitions \ref{def:zetadet} and \ref{def:zetadetprime} make sense if we can show exponential decay of the trace of the heat kernel of the Laplacian, and compute its short-time asymptotics (in two dimensions, we have a short-time expansion with $n=2$).

Proposition 2.37 in \cite{BerGetVer_HeatKernels} gives smoothness (in particular measurability) of $\Tr(e^{-tL})$ and exponential decay of the trace of the heat kernel (restricted to the positive eigenspaces) for large time. The same argument extends to manifolds with boundary.

Short-time asymptotics of the heat kernel associated to the Laplacian (on a manifold with or without boundary) are nicely discussed in \cite{Grieser_HeatKernel}. The Minakshisundaram-Pleijel short-time expansion (for manifolds without boundary) can also be found as Proposition 2.47 in \cite{BerGetVer_HeatKernels}. For the short-time expansion of the heat kernel on a surface with Dirichlet boundary, we also refer to the original paper of McKean and Singer \cite{McKeanSinger_Laplacian}.

\subsubsection{Determinantal identities}
We will now state an identity between determinants of harmonic operators and masses of loops that will be useful later on.

Let $D$ be a bounded domain of the complex plane, and let $K_1$ and $K_2$ be two disjoint connected compact subsets of its closure. We moreover consider a (possibly empty) boundary arc $\B$ that is disjoint from the two compact sets $K_1$ and $K_2$, and call $\A = \partial D\sm \B$ the rest of the boundary of $D$.
\begin{figure}[htb]\label{fig-Setup3}
\begin{center}
\includegraphics[width = 6cm]{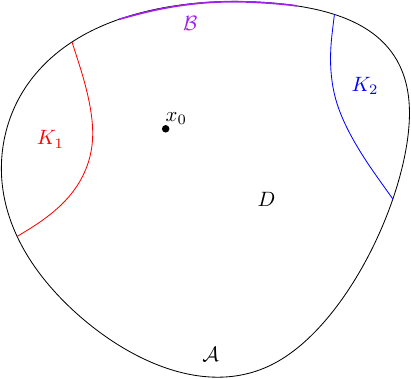}
\caption{The set-up of Lemma \ref{lem:DRN-expl}.}
\end{center}
\end{figure}
We put Dirichlet boundary conditions on $\partial K_1$, $\partial K_2$ and $\A$ and Neumann boundary conditions on $\B$.

Let us also consider a discrete approximation at mesh size $\d$ of this setting.

\begin{lem}\label{lem:ratio-det}
We have the following discrete identities:
$$
\frac{\det(\D_{D^\d})\det(\D_{D^\d \sm (K_1^\d \cup K_2^\d)})}{\det(\D_{D^\d\sm K_1^\d})\det(\D_{D^\d\sm K_2^\d})} = e^{-\mu^{\lo}_{D^\d}\{l|l\cap K^\d_1\neq\emptyset, l\cap K^\d_2\neq\emptyset\}} = \det(\Id - \Ha^{\partial K_1^\d\rightarrow \partial K_2^\d}_{D^\d} \circ \Ha^{\partial K_2^\d\rightarrow \partial K_1^\d}_{D^\d}).
$$
The following continuous equalities hold (where $\zeta$-regularized determinants are defined only if the domains are smooth):
$$
\frac{\det_\z(\D_{D})\det_\z(\D_{D \sm (K_1 \cup K_2)})}{\det_\z(\D_{D\sm K_1})\det_\z(\D_{D\sm K_2})} = e^{-\mu^{\lo}_{D}\{l|l\cap K_1\neq\emptyset, l\cap K_2\neq\emptyset\}} = {\det}_{F}(\Id - \Ha^{\partial K_1\rightarrow \partial K_2}_{D} \circ \Ha^{\partial K_2\rightarrow \partial K_1}_{D}).
$$
\end{lem}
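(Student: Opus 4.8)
The plan is to establish the discrete identities first, then pass to the continuous ones by a scaling-limit argument combined with the definitions of the Fredholm and $\zeta$-regularized determinants. In the discrete setting there are two equalities to prove, and I would treat them separately as they come from different mechanisms. For the first equality, relating the ratio of Laplacian determinants to the exponential of a loop mass, I would invoke Lemma \ref{lem:det-Lap-loop-meas}, which gives $|\mu^{\lo}_{\G}| = -\log\det(\D_{\G})$ for any finite graph $\G$. Applying this to each of the four graphs $D^\d$, $D^\d\sm K_1^\d$, $D^\d\sm K_2^\d$, $D^\d\sm(K_1^\d\cup K_2^\d)$, the logarithm of the left-hand side becomes $-|\mu^{\lo}_{D^\d}| + |\mu^{\lo}_{D^\d\sm K_1^\d}| + |\mu^{\lo}_{D^\d\sm K_2^\d}| - |\mu^{\lo}_{D^\d\sm(K_1^\d\cup K_2^\d)}|$. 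The key observation is that $\mu^{\lo}_{D^\d\sm K_i^\d}$ is exactly the restriction of $\mu^{\lo}_{D^\d}$ to loops avoiding $K_i^\d$ (loops of the walk killed on $\partial K_i^\d$ are precisely loops in $D^\d$ that do not touch $K_i^\d$), so by inclusion-exclusion on the events $\{l\cap K_1^\d\neq\emptyset\}$ and $\{l\cap K_2^\d\neq\emptyset\}$ this alternating sum collapses to $-\mu^{\lo}_{D^\d}\{l : l\cap K_1^\d\neq\emptyset,\ l\cap K_2^\d\neq\emptyset\}$, which is the desired middle term.

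For the second discrete equality, relating the loop mass to the determinant of $\Id - \Ha^{\partial K_1^\d\to\partial K_2^\d}_{D^\d}\circ\Ha^{\partial K_2^\d\to\partial K_1^\d}_{D^\d}$, I would expand the loop mass $\mu^{\lo}_{D^\d}\{l : l\cap K_1^\d\neq\emptyset,\ l\cap K_2^\d\neq\emptyset\}$ by decomposing such a loop according to its successive excursions between $\partial K_1^\d$ and $\partial K_2^\d$. A loop meeting both sets alternates between $K_1^\d$-visits and $K_2^\d$-visits; recording the first hitting points of $\partial K_1^\d$ and $\partial K_2^\d$ along these excursions, the mass organizes into a sum over the number $2n$ of alternations, and the contribution at level $n$ is (up to the cyclic symmetry factor $1/n$ coming from the loop measure) the trace of the $n$-th power of the composition $\Ha^{\partial K_1^\d\to\partial K_2^\d}_{D^\d}\circ\Ha^{\partial K_2^\d\to\partial K_1^\d}_{D^\d}$, since the harmonic extension operator is exactly the transition kernel of the random walk watched on the two cuts. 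Summing $\sum_n \frac{1}{n}\Tr(M^n) = -\log\det(\Id - M)$ gives the claim; this is the standard loop-soup/transfer-operator computation, e.g. as in \cite{LeJan_Loops} or the excursion-theoretic decomposition of \cite{LW}, and I would cite it rather than reproduce every bookkeeping step.

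Finally, for the continuous identities I would argue that all three quantities are continuous under the $\d\to 0$ limit along the natural approximations. The middle term converges because the discrete loop measure $\mu^{\lo}_{D^\d}$ converges to the Brownian loop measure $\mu^{\lo}_D$ and the mass of loops meeting both $K_1$ and $K_2$ is a continuity point (this is a standard fact about the Brownian loop soup when $K_1,K_2$ are disjoint compact sets at positive distance, or separated by $\B$; the crossing-loop mass is finite). The right-hand side converges because the discrete harmonic extension operators converge to their continuous analogues by Lemma \ref{lem:harm-meas-conv} (and Remarks \ref{rem:harm-meas-conv-bound}, \ref{rem:harm-meas-conv-2-conn}), and the composition is a trace-class perturbation of the identity with a continuous kernel, so the finite-dimensional determinants converge to the Fredholm determinant; this is where I would use the material of Chapter 3 of \cite{SimTrace} on continuity of ${\det}_F$. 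For the left-hand side, the convergence $-\log\det(\D_{D^\d}) \to -\log\det_\z(\D_D) + (\text{area/perimeter counterterms})$ is the content of the Polyakov-type asymptotics of the discrete Laplacian determinant; the crucial point is that in the \emph{ratio} the divergent counterterms — which are local functionals of the metric, area and boundary length — cancel exactly, because $\mathrm{area}(D) - \mathrm{area}(D\sm K_1) - \mathrm{area}(D\sm K_2) + \mathrm{area}(D\sm(K_1\cup K_2)) = 0$ and similarly for boundary length, so the ratio of regularized determinants is the genuine limit of the ratio of discrete determinants. I expect this last point — making the cancellation of the regularization counterterms rigorous, and hence justifying the $\zeta$-regularized form of the left-hand side — to be the main obstacle; the discrete combinatorics and the operator-convergence are comparatively routine given the toolbox already assembled.
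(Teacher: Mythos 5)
Your treatment of the two discrete identities is correct and is exactly what the paper intends: the first equality is inclusion--exclusion applied to Lemma \ref{lem:det-Lap-loop-meas} over the four graphs, and the second is the standard excursion decomposition of loops alternating between $\partial K_1^\d$ and $\partial K_2^\d$, with the cyclic factor $1/n$ producing $\sum_n \frac{1}{n}\Tr(M^n)=-\log\det(\Id-M)$. So far so good.

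The gap is in your route to the continuous identities. You propose to obtain them as the $\d\to 0$ limit of the discrete ones, and you correctly identify that this requires the asymptotic expansion $\log\det(\D_{D^\d})=(\text{local counterterms})+\log\det_\z(\D_D)+o(1)$, with counterterms that cancel in the four-fold ratio. But this expansion --- identifying the $\zeta$-regularized determinant as the constant term in the asymptotics of the discrete Laplacian determinant on lattice approximations of a general planar domain with mixed Dirichlet/Neumann boundary --- is not a routine fact and is not available at the generality needed here (general piecewise-$\C^1$ domains, complements of hulls, Neumann arcs). It is a substantial open-ended problem in its own right, known only in special geometries, and nothing in the toolbox assembled in Section \ref{def} supplies it. Note that the paper is careful never to claim convergence of ratios of discrete Laplacian determinants to ratios of $\zeta$-determinants: Proposition \ref{prop:cv-det} only proves convergence of the determinants $\det(\Id-\Ha\circ\Ha)$, i.e.\ of the rightmost quantity, which is exactly the term for which your operator-convergence argument (via Lemma \ref{lem:harm-meas-conv} and the continuity of ${\det}_F$) does work.

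The paper's actual proof of the continuous identities is intrinsic to the continuum: both equalities are quoted from Propositions 2.1 and 2.2 of \cite{Dub_SLEGFF} (which relate $\zeta$-determinants, the Brownian loop measure, and the Fredholm determinant directly, e.g.\ via heat-kernel/gluing arguments and the continuous excursion decomposition), with the observation that those proofs survive the presence of a Neumann arc. If you want a self-contained argument you should follow that route --- prove the continuous identities among themselves, without reference to the lattice --- rather than trying to push the discrete identities to the limit. Your scaling-limit argument for the middle and right-hand terms is fine as a consistency check, but it cannot by itself deliver the leftmost $\zeta$-determinant identity.
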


\begin{proof}The equalities in the continuous setting are stated as Proposition 2.1 and 2.2 in \cite{Dub_SLEGFF} (the proof carries through if there is some Neumann boundary). The second equality in the discrete setting is proved similarly as its continuous counterpart, whereas the first one is a consequence of Lemma \ref{lem:det-Lap-loop-meas}.
\end{proof} 

\begin{rem}\label{rem:ratio-det-non-smooth}
Note that the notion of harmonic function does not depend on the underlying metric. In particular, the quantity $\det_F(\Id - \Ha^{\partial K_1\rightarrow \partial K_2}_{D} \circ \Ha^{\partial K_2\rightarrow \partial K_1}_{D})$ only depends on the complex structure, and not on the metric. Hence if $D$ is any domain of the plane, and $K_1$ and $K_2$ are smooth subdomains of $D$\footnote{Meaning that if we conformally map $D$ to a smooth domain, $K_1$ and $K_2$ are mapped to compact sets with smooth boundary.}, then for any smooth metric $g$ on $D$, we have $\frac{\det_\z(\D^g_{D})\det_\z(\D^g_{D \sm (K_1 \cup K_2)})}{\det_\z(\D^g_{D\sm K_1})\det_\z(\D^g_{D\sm K_2})} = \det_F(\Id - \Ha^{\partial K_1\rightarrow \partial K_2}_{D} \circ \Ha^{\partial K_2\rightarrow \partial K_1}_{D})$.
\end{rem}

\subsection{The determinant line bundle}\label{subsec-cocycle}

Let us now define some objects (following Kontsevich and Suhov \cite{KontSuh}) that will allow us to extend our family of loop measures $\mu^\#_A$ to general Riemann surfaces.

\subsubsection{Real line bundles}

We briefly recall some facts about real line bundles.

\begin{itemize}

\item Given a topological space $X$, a topological \emph{real line bundle} $L$ over the base space $X$ is the data of a (continuously varying) one-dimensional real vector space $l(x)$ for every point $x$ - which is called the fiber or the line above $x$. The \emph{trivial line bundle} over $X$ is the space $X\times\R$ where all fibers are canonically identified to the vector space $\R$. Line bundles can be interesting because of their global topology: on $\S^1$ for example, we can construct a line bundle homeomorphic to a Moebius strip.

We call a line bundle \emph{oriented} if its fibers carry a (locally consistent) orientation.

\item This provides a way to generalize functions, by looking at \emph{sections} of a line bundle. A section is the data for any $x\in X$ of a (continuously varying) point $s(x)\in l(x)$. Sections of the trivial line bundle are canonically identified with continuous functions over $X$.

\item We call a line bundle $L$ \emph{trivializable} if there exists an isomorphism\footnote{Isomorphisms of line bundles restrict to the identity on the base space $X\times\{0\}$ and are linear on the fibers $l(x)$.} $\phi$ to the trivial line bundle $X\times\R$. The data of such a \emph{trivialization} $\phi$ is equivalent to the data of a non-vanishing section of $L$. Indeed the trivial line bundle $X\times\R$ has a canonical non-vanishing section: the constant function $s(x)=1$, which can be pushed to a non-vanishing section of $L$ via any isomorphism. Conversely, a non-vanishing section of $L$ gives a trivialization. Identifying a trivializable line bundle as the space $X\times\R$ is usually not canonical.

\item Given an oriented real line $l$, we can define its $c$-th power $l^{\otimes c}$ for any real number $c$ in the following way. For $c=0$, we set $l^{\otimes 0}=\R$. For a non-zero real number $c$, we define the positive half-line of the space $l^{\otimes c}$ as the set of formal vectors $v^{\otimes c}$ for positive $v\in l$, equipped with the scalar multiplication $\lambda v^{\otimes c} := (\lambda^{1/c} v)^{\otimes c}$ for positive $\lambda$, and with the unique additive structure compatible with this scalar multiplication. One can check that, for integer powers $c=n$, this is consistent with the usual $n$-th tensor powers. Moreover, the spaces $l^{\otimes c}$ and $l^{\otimes -c}$ are canonically dual (there exists a pairing such that $v^{\otimes c}\cdot v^{\otimes -c}=1$ for any non-zero $v\in l$). For a trivializable oriented line bundle $L$ with fibers $l(x)$, we define its $c$-th power $L^{\otimes c}$ as the line bundle whose fibers are the lines $l(x)^{\otimes c}$.

\item A measure is dual to functions on $X$, or in other words dual to sections of the trivial line bundle $X\times\R$. Given a trivializable line bundle $L$, we call $L^*$-\emph{valued measures} the objects dual to sections of the line bundle $L$. Given a non-vanishing section $s$ of the dual line bundle $L^*$, any $L^*$-valued measure $\mu$ can be written $s\mu_s$ for some (scalar, signed) measure $\mu_s$ on the base space $X$. Indeed, note that $s$ pairs pointwise with any section of $L$ to give a function, that can then be integrated against $\mu_s$

\end{itemize}

In the following, our base space will be the set of simple loops $X=\L(\Sigma)$ on a Riemann surface $\Sigma$. We will describe a trivializable oriented line bundle $L=|\Det|_\Sigma$ on $X$, called the determinant line bundle\footnote{We follow the notations and terminology of \cite{KontSuh}. The standard determinant line bundle $|\det|$ (implicitly defined in Section \ref{sec:detlinRS}) is a real line bundle with base space the moduli space of Riemann surfaces.}. Any embedding of Riemann surfaces $\Sigma'\hookrightarrow\Sigma$ will provide a map $|\Det|_\Sigma' \hookrightarrow |\Det|_\Sigma$ (see Proposition \ref{prop:incldetbund}). The geometric interest of the family of determinant line bundles lies in the fact that, even if each one of them is trivializable, the family as a whole cannot be trivialized in a way consistent with all possible embeddings of Riemann surfaces.

Moreover (see Section \ref{from-ann-to-rs}), we will construct a measure $\mu_\Sigma$ on the space of simple loops $\L(\Sigma)$. We will argue that a more natural object than the family $\mu_\Sigma$ is the family of twisted measures $\lambda_\Sigma = s_\Sigma^c \mu_\Sigma$ where $s_\Sigma^c$ is a (non-vanishing) section of the $c$-th power of the determinant line bundle over $X$, i.e $\lambda_\Sigma$ is an $|\Det|_\Sigma^{\otimes c}$-valued measure. Any choice of trivialization of the line bundle $|\Det|_\Sigma$ (e.g. $s_\Sigma$) then gives a (scalar) measure on the base space $\L(\Sigma)$.

\subsubsection{The determinant line associated to a Riemann surface}\label{sec:detlinRS}

To a Riemann surface $\Sigma$, we associate an oriented line $|\det|_\Sigma$, the vector space generated by formal vectors $[g]$ associated to smooth (and well-behaved - if the surface is open) metrics compatible with the complex structure, and quotiented by the relations\footnote{Remark 2.1 and Lemma 2.1 in \cite{KontSuh} show why the quotient is a half-line and not a point.}:
$$
[e^{2\sigma}g]=\exp(S_L(g,\sigma))[g],
$$
where $S_L$ is the Liouville action ($K$ denotes the scalar curvature and $dA$ is the area form):
$$
S_L(g,\sigma)=-\frac{1}{12\pi}\int_\Sigma\left(\frac 12\|\nabla_g\sigma\|^2+K_g\sigma\right)dA_g.
$$
Note that the map $g\mapsto [g]$ is not necessarily homogeneous: on a closed surface, by the Gauss-Bonnet formula, $[\lambda g]=\lambda^p[g]$ where the power $p$ is given by $3p=\text{genus}(\Sigma)-1$.

A linear form on $|\det|_\Sigma$ is an element $\psi\in|\det|_\Sigma^{\otimes-1}$, i.e. is such that $\psi([e^{2\sigma} g])=\exp(S_L(g,\sigma))\psi([g])$, and it thus may be identified with a functional (also denoted $\psi$) defined on the space of metrics satisfying the anomaly formula
$$
\psi(e^{2\sigma} g)=\exp(S_L(g,\sigma))\psi(g).
$$
From this representation of $|\det|_\Sigma^{\otimes -1}$ (and by definition of fractional powers), we see that an element of the fractional power of the determinant line $|\det|_\Sigma^{\otimes c}=\left[|\det|_\Sigma^{\otimes -1}\right]^{\otimes -c}$ may be identified with a functional $\psi$ such that
\begin{equation}\label{eq:detlinpow}
\psi(e^{2\sigma} g)=\exp(-cS_L(g,\sigma))\psi(g).
\end{equation}

%This leads to the canonical isomorphism $|\det_\Sigma|\simeq|\det_{\widehat\Sigma}|^{\otimes\frac12}$ where $\widehat\Sigma$ designates the Schottky double of the open surface $\Sigma$.

\subsubsection{The Polyakov-Alvarez conformal anomaly formula}\label{polalv}

On a Riemann surface $\Si$, consider two metrics agreeing with the complex structure, $g$ and $g'=e^{2\s}g$.

\begin{thm}[Polyakov-Alvarez conformal anomaly formula, \cite{OsgPhiSar_DetLaplacians}, Equations 1.13 and 1.15]

If $\Si$ is a Riemann surface without boundary,
$$
\log \frac{\det'_\z(\D^{g'})}{\det'_\z(\D^{g})} = 2 S_L(g,\sigma) + \log \frac{\mathrm{Area}_{g'}(\Sigma)}{\mathrm{Area}_g(\Sigma)}.
$$
If $\Si$ has a non-trivial boundary with full Dirichlet boundary conditions,
$$
\log \frac{\det_\z(\D^{g'})}{\det_\z(\D^{g})} = 2 S_L(g,\sigma)  -\frac{1}{6\pi}\int_{\partial \Si} k_g \s ds_g -\frac{1}{4\pi} \int_{\partial \Si} \partial_n\s ds_g,
$$
where $k$ is the curvature of the boundary, $\partial_n$ is the outer normal derivative, and $ds$ is the element of arclength on $\partial \Si$.
\end{thm}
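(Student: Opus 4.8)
The plan is to prove the infinitesimal form of each identity and then integrate. Fix $\s$ and interpolate by $g_u=e^{2u\s}g$ for $u\in[0,1]$, so that $g_0=g$ and $g_1=g'$; write $\D_u=\D^{g_u}$. It then suffices to compute $\frac{d}{du}\log\det_\z(\D_u)$ and integrate over $u\in[0,1]$. The feature of dimension two that makes this tractable is that the Laplacian is conformally covariant in the simplest possible way, $\D^{e^{2\s}g}=e^{-2\s}\D^{g}$, so that along the interpolation $\dot\D_u=-2\s\,\D_u$ (as operators on the fixed underlying manifold, self-adjoint for the moving inner product $L^2(dA_{g_u})$, or after conjugating to a fixed $L^2$ space).

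Next I would establish the variational identity $\frac{d}{du}\log\det_\z(\D_u)=-2\,a_0(\s,\D_u)$, where $a_0(\s,\D_u)$ denotes the coefficient of $t^0$ in the short-time asymptotics of $\Tr(\s\,e^{-t\D_u})$, with the kernel of $\D_u$ projected out when there is no Dirichlet boundary. Concretely, writing $\z_{\D_u}(s)=\Me[\Tr(P_{(0,\infty)}e^{-t\D_u})](s)$ and using Duhamel's formula together with $\dot\D_u=-2\s\D_u$, one gets $\frac{d}{du}\Tr(e^{-t\D_u})=-t\,\Tr(\dot\D_u e^{-t\D_u})=-2t\,\partial_t\Tr(\s\,P_{(0,\infty)}e^{-t\D_u})$; an integration by parts in $t$ then gives $\frac{d}{du}\z_{\D_u}(s)=2s\,\Me[\Tr(\s P_{(0,\infty)}e^{-t\D_u})](s)$, and because of the explicit factor $s$ the derivative of this at $s=0$ equals $2\,\Me[\,\cdot\,](0)$, i.e. twice the $t^0$ heat coefficient. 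Interchanging $\frac{d}{du}$ with the trace and with the Mellin integral, the integration by parts, and the meromorphic continuation together with holomorphy at $0$ are all legitimized by the uniform short-time asymptotics and exponential large-time decay of the localized heat trace recalled before Definition \ref{def:zetadet}. In the boundaryless case one notes in addition that the zero eigenspace is the metric-independent space of constants, so projecting it out only subtracts $\Tr(\s P_0)=\mathrm{Area}(g_u)^{-1}\int_\Si\s\,dA_{g_u}$ from the naive coefficient; this subtracted term is what eventually produces the $\log\mathrm{Area}$ discrepancy.

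The remaining ingredient is purely local, and is the only point at which the two cases differ. The Minakshisundaram--Pleijel expansion on a closed surface, respectively the McKean--Singer / Gilkey boundary heat expansion in the Dirichlet case, evaluates $a_0(\s,\D_u)$ as a universal multiple of $\int_\Si\s K_{g_u}\,dA_{g_u}$, plus, in the boundary case, universal multiples of $\int_{\partial\Si}\s k_{g_u}\,ds_{g_u}$ and $\int_{\partial\Si}\partial_n^{g_u}\s\,ds_{g_u}$. To conclude I would integrate over $u\in[0,1]$, using the conformal transformation laws $K_{g_u}dA_{g_u}=(K_g-u\,\D_g\s)dA_g$ and $k_{g_u}ds_{g_u}=(k_g+u\,\partial_n^g\s)ds_g$ and the conformal invariance $\partial_n^{g_u}\s\,ds_{g_u}=\partial_n^g\s\,ds_g$: a single integration by parts on $\Si$ turns $\int_0^1\int_\Si\s K_{g_u}dA_{g_u}\,du$ into $\int_\Si(\tfrac12\|\nabla_g\s\|^2+K_g\s)dA_g$ plus a boundary remainder which, in the Dirichlet case, cancels against the remainder produced by the $\int_{\partial\Si}\s k_{g_u}ds_{g_u}$ term, leaving exactly $2S_L(g,\s)$ together with the two boundary integrals of the statement; in the closed case the $\Tr(\s P_0)$ term integrates, via $\frac{d}{du}\log\mathrm{Area}(g_u)=2\,\mathrm{Area}(g_u)^{-1}\int_\Si\s\,dA_{g_u}$, to $\log\frac{\mathrm{Area}(g')}{\mathrm{Area}(g)}$.

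The main obstacle is the bookkeeping of the universal constants in the $t^0$ heat coefficient --- above all the coefficient of the $\int_{\partial\Si}\partial_n\s\,ds$ term, which is invisible to the purely formal computation and genuinely requires the boundary heat-kernel expansion --- together with verifying that the integrations by parts on a surface with boundary produce exactly the boundary terms that are needed and leave no spurious $\int_{\partial\Si}\s\,\partial_n\s\,ds$ contribution. A secondary point is that the boundary heat expansion, hence the entire boundary case, requires the boundary-smoothness hypothesis on $(\Si,g)$ that the excerpt has been carrying along; without smoothness $\det_\z$ is not even defined.
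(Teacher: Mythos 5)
The paper offers no proof of this statement---it is quoted directly from Osgood--Phillips--Sarnak (Equations 1.13 and 1.15 of the cited reference)---so your sketch is being compared against a citation rather than an internal argument, and it is in fact precisely the standard variational heat-kernel proof from that reference: conformal covariance $\D^{e^{2\s}g}=e^{-2\s}\D^g$, the identity $\tfrac{d}{du}\log\det_\z(\D_u)=-2a_0(\s,\D_u)$ via Duhamel and integration by parts in the Mellin variable, the zero-mode correction producing the $\log$-area term in the closed case, and the cancellation of the $\int_{\partial\Si}\s\,\partial_n\s\,ds$ remainders between the Gauss-curvature and geodesic-curvature contributions, all of which check out. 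The only inputs you defer to the literature are the universal constants in the Dirichlet boundary heat expansion (McKean--Singer/Gilkey), which you flag honestly and which are exactly the same inputs the paper itself takes on faith by citing the result.
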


Note that both boundary integrands vanish if the metrics $g$ and $g'$ are well-behaved (indeed, $\partial_n\s=0$, as can be seen by symmetry on the Schottky double of $\Si$).

\begin{rem}\label{rem:pol-alv-neum}
We can deduce from these formulas a similar explicit formula when $(\Si,g)$ has some Neumann boundary components. Indeed, consider the doubling of $\Si$ consisting of $\Si$ and its mirror copy, glued alongside their Neumann boundaries, which we denote $\widehat{\Si}$ by abuse of notation. The metric $g$ being well-behaved near the Neumann boundary of $\Si$, it extends to a smooth metric $\widehat{g}$ on the doubled surface $\widehat{\Si}$. Let us call $\Si^D$ the surface $\Si$ with all its boundary conditions changed to Dirichlet. Via symmetrization (resp. antisymmetrization), Laplacian eigenfunctions on $\widehat{\Si}$ are in correspondence with Laplacian eigenfunctions on $\Si$ (resp. $\Si^D$). As a consequence, the spectrum of the Laplacian on $\widehat{\Si}$ is the union of the Laplacian spectra on $\Si$ and $\Si^D$. In other words, $\log\det_\z(\D^{\widehat{g}}_{\widehat{\Si}})=\log\det_\z(\D^g_\Si)+\log\det_\z(\D^g_{\Si^D})$.
\end{rem}

On a closed Riemann surface $\Si$, the conformal anomaly formula can be rephrased (see (\ref{eq:detlinpow})), by saying that the functional
\begin{equation}\label{eq:psi1}
\psi_{\Si}(g)=\frac{{\det}'_\zeta(\D^g)}{\mathrm{Area}_g(\Sigma)}
\end{equation}
is an element of the line $|{\det}|_\Sigma^{\otimes-2}$. Incidentally, the conformal anomaly formula shows that $S_L(g,\sigma)=0$ whenever $(\Sigma,g)$ and $(\Sigma,e^{2\sigma}g)$ are isometric. Moreover, on an open connected surface $\Si$, if $\D_{\Si^D}$ and $\D_{\Si^N}$ designate the Laplacian with Dirichlet (resp. Neumann) boundary conditions, the functionals $\psi_{\Si^D}$ and $\psi_{\Si^N}$ given by
\begin{equation}\label{eq:psi2}
\psi_{\Si^D}(g)={\det}_\zeta(\D_{\Si^D}^g)\ \mathrm{and}\ \psi_{\Si^N}(g)=\frac{{\det}'_\zeta(\D_{\Si^N}^g)}{\mathrm{Area}_g(\Sigma)}
\end{equation}
are elements of the line $|\det|_\Sigma^{\otimes-2}$. If $\Sigma$ has multiple connected components, and if we assign different boundary conditions on different components of the boundary, the same holds where $\mathrm{Area}_g(\Sigma)$ is replaced with the product of the areas of connected components of $\Si$ that have no Dirichlet boundary. Alternatively, one can restrict these functionals to normalized metrics, and drop the area correction.

\subsubsection{The determinant line associated to a loop}

If $\Sigma$ is a Riemann surface, recall that we denote by ${\mathcal L}(\Sigma)$ the set of simple loops drawn on $\Sigma$. For any loop $\ell\in{\mathcal L}(\Sigma)$, we define
$$
|\det|_{\ell,\Sigma}=|\det|_\Si\otimes|\det|_{\Si\setminus\ell}^{-1}.
$$

The assignment $\ell\mapsto|\det|_{\Sigma,\ell}$ defines an oriented line bundle $|\Det|_\Sigma$ over the base space ${\mc L}(\Sigma)$, the \emph{determinant line bundle}.

\begin{prop}\label{prop:incldetbund}
Any embedding $\Si'\hookrightarrow \Si$ induces a map $\phi_{\Si'}^{\Si}:|\Det|_{\Si'}\hookrightarrow|\Det|_\Si$ between the associated determinant line bundles.

This map is natural in the sense that given Riemann surfaces $\Si_1\hookrightarrow \Si_2 \hookrightarrow \Si_3$, we have $\phi_{\Si_1}^{\Si_2}\circ\phi_{\Si_2}^{\Si_3}=\phi_{\Si_1}^{\Si_3}$.
\end{prop}

Let us first note that any embedding $\Si'\hookrightarrow \Si$ induces an embedding ${\mc L}(\Sigma')\hookrightarrow{\mc L}(\Sigma)$ of the spaces of simple loops, i.e. of the base spaces of the determinant line bundles.
Given two Riemann surfaces $\Si'\subset \Si$ and a simple loop $\ell\in{\mc L}(\Si')$, we will define $\phi_{\Si'}^{\Si}$ by giving its restriction $\phi_{\Si'|\l}^{\Si}$ to fibers, which is a natural isomorphism between the determinant lines
$$
|\det|_{\ell,\Sigma'}\simeq|\det|_{\ell,\Sigma},
$$
i.e. a natural isomorphism
$$
|\det|_\Si'\otimes|\det|_{\Si'\setminus\ell}^{-1}\simeq|\det|_{\Si}\otimes|\det|_{\Si\setminus\ell}^{-1}.
$$

\begin{defn}\label{def:neutcollmet}
A \emph{neutral collection} of metrics $(g,g_\l,g',g'_\l)$ is the data of four well-behaved metrics on $\Si$, $\Si\setminus\ell$, $\Si'$ and $\Si'\setminus\ell$ that satisfy the following property.
There exists a set $A$, disjoint union of two annuli, one in each component of a tubular neighborhood of $\l$ in the surface $\Si'\setminus\l$ (so that $A$ disconnects a small neighborhood of the loop $\ell$ from points of $\Si$ and $\Si'$ that are away from the loop $\ell$; inside of $A$ means near $\ell$ and outside of $A$ means away from $\l$) such that all four metrics agree on $A$; $g$ and $g_\l$ (resp. $g'$ and $g'_\l$) agree outside of $A$; $g$ and $g'$ (resp. $g_\l$ and $g'_\l$) agree inside of $A$.
\end{defn}

Given a neutral collection of metrics $(g,g_\l,g',g'_\l)$ on $\Si$, $\Si\setminus\ell$, $\Si'$ and $\Si'\setminus\ell$, we would like to define the isomorphism $\phi_{\Si'|\l}^{\Si}$ by
\begin{equation}\label{eq:isomdet}
[g]\otimes[g_\l]^{-1}\simeq [g']\otimes [g'_\l]^{-1}.
\end{equation}
Given another choice of a neutral collection $(e^{2\sigma}g,e^{2\sigma_\l}g_\l,e^{2\sigma'}g',e^{2\sigma'_\l}g'_\l)$, we can assume without loss of generality (by cutting and pasting) that the union of annuli $A$ in Definition \ref{def:neutcollmet} is the same for our two neutral collections. In particular, $\sigma=\sigma'=\sigma_\l=\sigma'_\l$ on the set $A$; $\sigma$ and $\sigma_\l$ (resp. $\sigma'$ and $\sigma'_\l$) agree outside of $A$; $\sigma$ and $\sigma'$ (resp. $\sigma_\l$ and $\sigma'_\l$) agree inside of $A$. By locality of the Liouville action, we then have
$$
S_L(g,\sigma) - S_L(g_\l,\sigma_\l) = S_L(g',\sigma') - S_L(g'_\l,\sigma'_\l),
$$
so that the isomorphism $\phi_{\Si'|\l}^{\Si}$ - as defined in (\ref{eq:isomdet}) - does not depend on the choice of a neutral collection of metrics. 

We can rephrase the fact that (\ref{eq:isomdet}) is a non-ambiguous definition in the following way (where $\det_\z$ denotes either $\det_\z$ or $\det'_\z$, and boundary conditions are Neumann on $\l$ and Dirichlet elsewhere):

\begin{prop}\label{prop-defcocycle}
The quantity
$$
\M(\Si,\Si';\l) = \log \left(\frac{\det_\z(\D^{g'}_{\Si'})\det_\z(\D^{g_\l}_{\Si\setminus\l})}{\det_\z(\D^{g}_{\Si})\det_\z(\D^{g'_\l}_{\Si'\setminus\l})}\right)
$$
is independent of the choice of a neutral collection of normalized metrics $(g,g_\l,g',g'_\l)$.
\end{prop}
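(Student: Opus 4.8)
The plan is to deduce the statement from the Polyakov--Alvarez anomaly formula, reducing it to exactly the locality property of the Liouville action $S_L$ that was used just above to check that the canonical isomorphism $|\det|_{\Si,\l}\simeq|\det|_{\Si',\l}$ is well defined.

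First I would record the consequence of Section \ref{polalv} and Remark \ref{rem:pol-alv-neum} (the latter handling the Neumann arc along $\l$) that is actually needed here: because the metrics in a neutral collection are well-behaved and all their connected components have area $1$, the boundary-curvature, normal-derivative and area terms in the anomaly formula all drop out, so that for any well-behaved $g$ and any $\s$ with $e^{2\s}g$ well-behaved,
$$
\log\det_\z(\D^{e^{2\s}g}_{S}) - \log\det_\z(\D^{g}_{S}) = 2\,S_L(g,\s)
$$
for each of $S\in\{\Si,\ \Si\sm\l,\ \Si',\ \Si'\sm\l\}$ carrying the prescribed boundary conditions. Feeding this into the definition of $\M$ for two neutral collections $(g,g_\l,g',g'_\l)$ and $(e^{2\s}g,e^{2\s_\l}g_\l,e^{2\s'}g',e^{2\s'_\l}g'_\l)$, the two values of $\M$ differ by
$$
2\,\Big(S_L(g',\s') + S_L(g_\l,\s_\l) - S_L(g,\s) - S_L(g'_\l,\s'_\l)\Big),
$$
so everything comes down to showing this vanishes.

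I would first treat the case where both neutral collections are taken relative to the same collar $A$. Comparing the two sets of neutrality conditions, the conformal factors then automatically satisfy the compatibility relations of Section \ref{subsec-cocycle}: $\s=\s_\l=\s'=\s'_\l$ on $A$; $\s=\s_\l$ and $\s'=\s'_\l$ on the part of the surface away from $\l$; and $\s=\s'$, $\s_\l=\s'_\l$ on the part near $\l$ (for instance $\s=\s'$ near $\l$ is just the identity $e^{2\s}g=e^{2\s'}g'$ there, which holds since $g=g'$ there by neutrality of the first collection and $e^{2\s}g=e^{2\s'}g'$ there by neutrality of the second). Splitting each Liouville action --- whose integrand $\tfrac12\|\nabla_g\s\|^2+K_g\s$ is local in $(g,\s)$ --- into its contributions from the region near $\l$, from $A$, and from the region away from $\l$, the $A$-contributions are the same in all four terms and cancel; the ``near $\l$'' contributions cancel between $S_L(g',\s')$ and $S_L(g,\s)$, and between $S_L(g_\l,\s_\l)$ and $S_L(g'_\l,\s'_\l)$; and the ``away from $\l$'' contributions cancel between $S_L(g_\l,\s_\l)$ and $S_L(g,\s)$, and between $S_L(g',\s')$ and $S_L(g'_\l,\s'_\l)$. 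Hence the displayed quantity is $0$; this is exactly the identity $S_L(g,\s)-S_L(g_\l,\s_\l)=S_L(g',\s')-S_L(g'_\l,\s'_\l)$ of Section \ref{subsec-cocycle}, now read at the level of $\z$-regularized determinants.

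The step I expect to be the real obstacle is the reduction, for two arbitrary neutral collections, to a common collar $A$: since two collars of $\l$ in $\Si'\sm\l$ need not be nested --- they may be disjoint --- one genuinely has to cut and paste the metrics of one collection so as to make it neutral relative to the collar of the other. One must verify that this can be carried out while keeping all metrics well-behaved and, crucially, keeping each connected component of unit area (the final rescaling back to unit area is a constant conformal factor, whose contribution to each $\det_\z$ is precisely the term already present in the anomaly formula above), and that the resulting quadruple is again a neutral collection. Once this routine-but-delicate bookkeeping is in place, the previous paragraph finishes the argument.
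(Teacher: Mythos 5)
Your argument is correct and is essentially the paper's own proof: the paper likewise combines the Polyakov--Alvarez anomaly formula (with the boundary and area terms vanishing for well-behaved, unit-area metrics, and Remark \ref{rem:pol-alv-neum} handling the Neumann arc along the loop) with locality of the Liouville action to reduce everything to the identity $S_L(g,\sigma)-S_L(g_\ell,\sigma_\ell)=S_L(g',\sigma')-S_L(g'_\ell,\sigma'_\ell)$. The reduction of two arbitrary neutral collections to a common collar, which you rightly flag as the delicate point, is dispatched in the paper with the same ``without loss of generality, by cutting and pasting'' that you describe, so you have not missed any idea present in the original argument.
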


From the definition of $\M$, we get the following cocycle property.

\begin{prop}[Cocycle property]\label{prop:cocycle}Suppose we have three Riemann surfaces $\Si_1\subset \Si_2 \subset \Si_3$, and a loop $\l \subset \Si_1$. Then $\M(\Si_3,\Si_2;\l) + \M(\Si_2,\Si_1;\l) = \M(\Si_3,\Si_1;\l)$.
\end{prop}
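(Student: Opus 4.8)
The plan is to evaluate all three quantities $\M(\Si_3,\Si_2;\l)$, $\M(\Si_2,\Si_1;\l)$ and $\M(\Si_3,\Si_1;\l)$ with a single \emph{coherent} family of well-behaved metrics, after which the identity falls out of the defining formula by cancellation. Since $\l\subset\Si_1\subset\Si_2\subset\Si_3$, a tubular neighborhood $N$ of $\l$ chosen inside $\Si_1$ is simultaneously a tubular neighborhood of $\l$ in $\Si_2$ and in $\Si_3$; set $A=N\sm\l$, the associated union of two annuli, which lies in $\Si_i\sm\l$ for every $i$. In particular ``inside $A$'' (near $\l$) and ``outside $A$'' (away from $\l$) mean the same thing for all three surfaces, and $A$ is an admissible choice of annuli in the definition of $\M$ for each of the pairs $(\Si_3,\Si_2)$, $(\Si_2,\Si_1)$, $(\Si_3,\Si_1)$.

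First I would construct metrics $g_i$ on $\Si_i$ and $g_{i,\l}$ on $\Si_i\sm\l$, for $i=1,2,3$, subject to: all six agree on $A$; $g_i$ agrees with $g_{i,\l}$ outside of $A$; $g_1,g_2,g_3$ agree inside of $A$; and $g_{1,\l},g_{2,\l},g_{3,\l}$ agree inside of $A$. These are produced by the same cutting-and-pasting recipe used above to show $\M$ is well defined: fix a well-behaved conformal metric on $A$; extend it across $\l$ once smoothly (this fixed germ is used for every $g_i$) and once as a flat cylinder near $\l$, so as to respect the Neumann condition there (used for every $g_{i,\l}$); then extend over the bulk of each $\Si_i$ — the region where the surfaces genuinely differ — using the \emph{same} extension for $g_i$ and $g_{i,\l}$, rescaling in the bulk so that each connected component has unit area. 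With this choice, $(g_2,g_{2,\l},g_3,g_{3,\l})$ is a neutral collection for the pair $(\Si_3,\Si_2)$, $(g_1,g_{1,\l},g_2,g_{2,\l})$ is one for $(\Si_2,\Si_1)$, and $(g_1,g_{1,\l},g_3,g_{3,\l})$ is one for $(\Si_3,\Si_1)$.

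Then, using Proposition \ref{prop-defcocycle} to evaluate each $\M$ with the corresponding neutral collection, the defining formula gives $\M(\Si_3,\Si_2;\l)=\log\frac{\det_\z(\D^{g_2}_{\Si_2})\det_\z(\D^{g_{3,\l}}_{\Si_3\sm\l})}{\det_\z(\D^{g_3}_{\Si_3})\det_\z(\D^{g_{2,\l}}_{\Si_2\sm\l})}$ and $\M(\Si_2,\Si_1;\l)=\log\frac{\det_\z(\D^{g_1}_{\Si_1})\det_\z(\D^{g_{2,\l}}_{\Si_2\sm\l})}{\det_\z(\D^{g_2}_{\Si_2})\det_\z(\D^{g_{1,\l}}_{\Si_1\sm\l})}$. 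Adding these, the factors $\det_\z(\D^{g_2}_{\Si_2})$ and $\det_\z(\D^{g_{2,\l}}_{\Si_2\sm\l})$ cancel, and what remains is exactly $\log\frac{\det_\z(\D^{g_1}_{\Si_1})\det_\z(\D^{g_{3,\l}}_{\Si_3\sm\l})}{\det_\z(\D^{g_3}_{\Si_3})\det_\z(\D^{g_{1,\l}}_{\Si_1\sm\l})}=\M(\Si_3,\Si_1;\l)$, as computed from $(g_1,g_{1,\l},g_3,g_{3,\l})$.

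The only step needing care is the construction of the coherent family of metrics: one must check that the ``well-behaved'' requirement — flat cylinders near the two copies of $\l$ in each $\Si_i\sm\l$, together with the unit-area normalization of every connected component — is compatible with forcing the metrics to coincide on $A$ and on the fixed collar of $\l$. This is handled exactly as in the discussion preceding Proposition \ref{prop-defcocycle}, choosing $N$ thin and the flat collars short so that each surface retains enough freedom in its bulk to meet the area normalization; the possible disconnectedness of $\Si_i\sm\l$ (when $\l$ separates) causes no difficulty since the normalization is imposed componentwise. Once the metrics are in place, the remainder of the argument is the one-line cancellation above.
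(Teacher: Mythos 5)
Your proof is correct and follows the same route the paper intends (the paper simply asserts the cocycle property ``from the definition of $\M$'' without writing out the argument): build one coherent family of six well-behaved metrics whose restrictions give neutral collections for each of the three pairs, then telescope the defining formula. Your attention to the compatibility of the coherence constraints with the well-behavedness and area-normalization requirements is exactly the point the paper leaves implicit, and you resolve it the same way the paper's construction of a single neutral collection does.
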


Note that, from the Polyakov-Alvarez anomaly formula, other choices of boundary conditions in Definition \ref{prop-defcocycle} would produce a cocycle $\widetilde{\M}$ differing from $\M$ by a coboundary $f$: $\widetilde{\M}(\Si,\Si';\l) = \M(\Si,\Si';\l) + f(\Si;\l) - f(\Si';\l)$.

We can now finish the proof of Proposition \ref{prop:incldetbund}.

\begin{proof}
The cocycle property \ref{prop:cocycle} shows that the composition of canonical isomorphisms $\phi$ between the lines $|\det|_{\ell,\Sigma_i}$ is itself a canonical isomorphism. As a consequence, the line $|\det|_{\ell,\Sigma}$ depends only on the loop $\ell$ and on an arbitrarily thin tubular neighborhood of it. In particular, any embedding $\xi:\Sigma'\hookrightarrow\Sigma$ extends to the bundle map $|\Det|_{\Sigma'}\hookrightarrow|\Det|_{\Sigma}$ in a way consistent with composition of maps.
\end{proof}

We will later see that the quantity $e^{-\M(A, A';\l)}$ correspond to the Radon-Nikodym derivative of our continuous $\SLE_2$ loop measure under restriction to a smaller annulus $A'\subset A$ (Proposition \ref{prop:covar-conf}).

\begin{rem}\label{rem:MregBl}
Comparing the definition of $\M$ to Lemma \ref{lem:ratio-det}, we can think of $\M$ as a regularization of a certain mass of Brownian loops, namely $\mu^{\lo}_{\Si}\{l|l\cap (\Si \sm \Si')\neq\emptyset\} - \mu^{\lo}_{\Si \sm \l}\{l|l\cap (\Si \sm \Si')\neq\emptyset\}$.
\end{rem}

\begin{rem}\label{rem:cont-cocycle}
The $\zeta$-regularized determinants of a smooth family of Laplacians is a smooth function (\cite{BerGetVer_HeatKernels}, Proposition 9.38). Hence the quantity $\M(\Si,\Si';\l)$ is regular in its parameters. In particular, it is measurable in $\l$ (for the Borel $\sigma$-algebra associated to the topology $\Top$).
\end{rem}

\subsection{Uniform spanning trees}

\subsubsection{Definitions}

Let $\G$ be a connected graph, possibly with a boundary (i.e. a distinguished subset of vertices). Fix some boundary conditions on $\G$, by declaring some of the boundary vertices wired and the others free. We can build from $\G$ a graph $\widetilde{\G}$ that encodes these boundary conditions, by contracting all the wired boundary vertices of $\G$ into one distinguished vertex and by deleting all free vertices, as well as edges having a free vertex as one of their endpoints.

\begin{defn}\label{def:UST}
The uniform spanning tree (UST) on $\G$ is the uniform measure on the set of subgraphs of $\widetilde{\G}$ that contain all vertices of $\widetilde{\G}$ (i.e. are spanning), and that are connected and cycle-free (i.e. are trees).
It is seen as a measure on subgraphs of $\G$.
\end{defn}

Suppose now that $\G$ is a connected and finite subgraph of $\Z^2$, and consider one of its spanning trees $T$. The graph $T^\dagger$ dual to $T$ is the set of edges of the grid $(\frac{1}{2},\frac{1}{2})+\Z^2$ that intersect edges of $\G\sm T$. 

\begin{defn}\label{def:explproc}
The exploration process $e$ of the spanning tree $T$ is a path drawn on the graph $(\frac{1}{4},\frac{1}{4})+\frac{1}{2}\Z^2$. It consists of all edges neighboring $\G$, that do not intersect $T\cup T^\dagger$.
\end{defn}

The curve $e$ is the interface between the spanning tree $T$ and its dual graph $T^\dagger$, it is a simple curve that follows the contour of the tree $T$ as closely as possible. One can actually build it as an exploration of the spanning tree $T$: take a boundary point (in $(\frac{1}{4},\frac{1}{4})+\frac{1}{2}\Z^2$) of the planar graph $\G$, and move forward in $\G$, by choosing the rightmost path in $(\frac{1}{4},\frac{1}{4})+\frac{1}{2}\Z^2$ that does not cross any edge of $T$. This will draw some connected component of the exploration process of $T$.

If we know the first steps of the exploration process $e$, we get local information on the tree $T$: the edges of $\G$ sitting to the right of $e$ are in $T$, whereas dual edges to its left are in the dual tree $T^\dagger$.
\begin{figure}[htb]
\begin{center}
\includegraphics[width = 7cm]{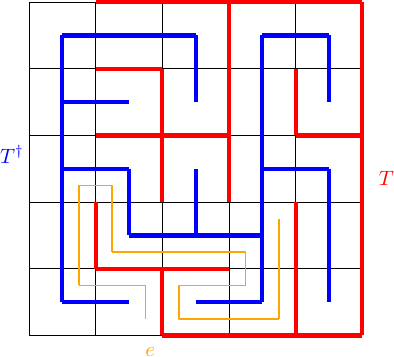}
\caption{A spanning tree $T$ with mixed boundary conditions, its dual tree, and the first steps of its exploration process $e$.}
\end{center}
\end{figure}

The UST measure has a spatial Markov property.

\begin{prop}\label{prop:USTMarkov}
Conditioned on the initial steps of the exploration process $e$, the law of $T$ in the unexplored domain has the law of a UST with free boundary conditions on the left side of $e$, and wired boundary condition on its right side.
\end{prop}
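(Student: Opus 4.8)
The plan is to reduce Proposition~\ref{prop:USTMarkov} to the elementary fact that, for a finite connected graph, conditioning the uniform spanning tree to contain a fixed sub-forest $F$ and to avoid a fixed set of edges $E'$ produces exactly the uniform spanning tree of the graph $\widetilde{\G}/F \setminus E'$ obtained by contracting $F$ and deleting $E'$ — both operations commute with the ``spanning'' and ``cycle-free'' constraints and alter the cardinality of the relevant set of trees only through a trivial bijection. Everything then comes down to two translations: rephrasing ``conditioning on the initial steps of $e$'' as such an edge-status conditioning, and recognizing the resulting contracted-deleted graph as the one that encodes the UST in the unexplored domain with the announced boundary conditions.

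First I would make the conditioning precise. Since the exploration $e$ of $T$ is a deterministic functional of $T$ (run the rightmost-turn algorithm of Definition~\ref{def:explproc}), fixing an admissible initial segment $\gamma$ of length $k$ amounts to conditioning on $\{e[0,k] = \gamma\}$. I claim this event coincides with $\{R_\gamma \subseteq T,\ L_\gamma \cap T = \emptyset\}$, where $R_\gamma$ denotes the set of primal edges of $\G$ sitting immediately to the right of $\gamma$ and $L_\gamma$ the set of primal edges whose dual edge sits immediately to the left of $\gamma$. That $\{e[0,k]=\gamma\}$ is contained in this event is the local readability of $T$ from its exploration already recorded after Definition~\ref{def:explproc} (edges to the right of $e$ lie in $T$; dual edges to its left lie in $T^\dagger$, i.e.\ the corresponding primal edges are absent from $T$). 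For the reverse inclusion one checks that, as long as $T$ contains $R_\gamma$ and misses $L_\gamma$, the rightmost-turn rule at each of the first $k$ steps only consults primal edges of $\G$ incident to the current tip, all of which lie in $R_\gamma \cup L_\gamma$, and is therefore forced to retrace $\gamma$. One also records here the structural points that $R_\gamma$ is a sub-forest of $\widetilde{\G}$, that the component of $R_\gamma$ meeting the starting point of $\gamma$ contains the distinguished wired vertex (so the explored tree hangs off the wire), and that $L_\gamma$ is disjoint from $R_\gamma$.

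With this in hand, the contraction-deletion identity provides a measure-preserving bijection between $\{T : R_\gamma \subseteq T,\ L_\gamma \cap T = \emptyset\}$, carrying the conditioned uniform law, and the spanning trees of $\widetilde{\G}/R_\gamma \setminus L_\gamma$, carrying the uniform law. It then remains to identify this graph with $\widetilde{\G'}$, where $\G'$ is the unexplored domain equipped with wired boundary along the right side of $\gamma$ and free boundary along its left side (compare the description of the hull $K^\d$ and its boundary conditions given just before Lemma~\ref{lem:DRN-expl}). Contracting $R_\gamma$ fuses the explored subtree, together with the starting point, onto the distinguished wired vertex, turning the arc of $\partial \G'$ running along the right of $\gamma$ into wired boundary; deleting $L_\gamma$ removes precisely the primal edges of $\G'$ crossing to the left of $\gamma$, whose far endpoints are exactly the vertices that should be declared free — and free vertices and their incident edges are deleted when forming $\widetilde{\G'}$ by Definition~\ref{def:UST}. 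Pushing the uniform measure forward along these identifications yields the claim.

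I expect the main obstacle to be the first step: the careful planar-combinatorics verification that the first $k$ steps of the rightmost-turn exploration are determined by — and in turn determine — precisely the edge-status data $(R_\gamma, L_\gamma)$, together with the structural assertions that $R_\gamma$ is a forest whose relevant component reaches the wire and that it is disjoint from $L_\gamma$. This is bookkeeping on the interface between $T$ and its dual tree $T^\dagger$; once it is in place, the contraction-deletion identity and the identification of $\widetilde{\G}/R_\gamma \setminus L_\gamma$ with $\widetilde{\G'}$ are routine.
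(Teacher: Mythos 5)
Your proposal is correct. Note that the paper itself offers no proof of Proposition \ref{prop:USTMarkov}: it is stated as the classical spatial Markov property of the UST, implicitly resting on the remark following Definition \ref{def:explproc} (edges to the right of $e$ are in $T$, dual edges to its left are in $T^\dagger$) and on the standard literature. Your argument is the standard one and fills this in completely: (i) the event that $e$ starts with an admissible segment $\gamma$ coincides with the edge-status event $\{R_\gamma\subseteq T,\ L_\gamma\cap T=\emptyset\}$, because the rightmost-turn rule at each step consults only the primal/dual edge at the current corner, whose status is pinned by $(R_\gamma,L_\gamma)$; (ii) conditioning the uniform measure on such an event is, by the contraction--deletion bijection, the UST of $\widetilde{\G}/R_\gamma\setminus L_\gamma$; (iii) that graph is exactly the unexplored domain with wired boundary on the right of $\gamma$ (since $R_\gamma$, being the explored part of the tree, is attached to the wired boundary through the starting point, contraction merges it into the distinguished wired vertex) and free boundary on the left (deletion of $L_\gamma$ matches the convention of Definition \ref{def:UST}). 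You correctly single out step (i) and the structural facts about $R_\gamma$ (forest, disjoint from $L_\gamma$, attached to the wire) as the only places requiring genuine planar bookkeeping; these do hold for the contour exploration, and the rest is routine. No gap.
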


Wilson gave in \cite{Wi} an algorithm to sample from the UST measure, by generating branches as loop-erased random walks. Boundary conditions are enforced by having the random walks be reflected (resp. stopped) upon hitting the free (resp. wired) boundary. This relates USTs to simple random walks, and hence to discrete harmonic analysis. In particular, wired and Dirichlet boundary conditions should correspond to each other, and similarly for free and Neumann boundary conditions. 

This connection between USTs and harmonic analysis will allow us to rewrite the Radon-Nikodym derivative of exploration processes. Let $\#T(\G)$ be the number of spanning trees on a graph $\G$. It is related to the discrete Laplacian on $\G$.

\begin{thm}[Matrix-tree theorem]
Suppose the wired boundary of $\G$ is non-empty.
Then $\#T(\G) = \det(\D_{\G})$.
\end{thm}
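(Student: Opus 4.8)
The plan is to present the standard proof via the Cauchy--Binet formula, adapted to the graph $\G$ with its wired boundary contracted to a single vertex as in Definition \ref{def:UST}. First I would set up notation: after contracting the wired boundary of $\G$ into a single distinguished vertex $\ast$ and deleting free vertices and their incident edges, we obtain the graph $\widetilde\G$ on vertex set $V = \{\ast\} \cup V_{\mathrm{int}}$, where $V_{\mathrm{int}}$ are the interior vertices. The operator $\D_\G$, as defined in Definition \ref{def:Laplacian}, acts on functions on $V_{\mathrm{int}}$ with Dirichlet condition at $\ast$; concretely it is the principal minor of the full (combinatorial) graph Laplacian $L$ of $\widetilde\G$ obtained by deleting the row and column indexed by $\ast$. (Up to the sign convention in Definition \ref{def:Laplacian}, $\D_\G = -L_{\mathrm{int}}$, and since $L_{\mathrm{int}}$ has even size or the sign is absorbed, $\det(\D_\G) = \det(L_{\mathrm{int}})$; I would state this sign bookkeeping explicitly.)

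Next I would introduce the (signed) incidence matrix: fix an arbitrary orientation of each edge of $\widetilde\G$ and let $B$ be the $V \times E$ matrix with $B_{v,e} = +1$ if $v$ is the head of $e$, $-1$ if $v$ is the tail, $0$ otherwise. A direct computation gives $B B^{\mathsf T} = L$, the full Laplacian of $\widetilde\G$. Deleting the row of $B$ corresponding to $\ast$ yields the reduced incidence matrix $B_\ast$ (of size $|V_{\mathrm{int}}| \times |E|$), and $B_\ast B_\ast^{\mathsf T} = L_{\mathrm{int}}$. Applying the Cauchy--Binet formula,
$$
\det(L_{\mathrm{int}}) = \det(B_\ast B_\ast^{\mathsf T}) = \sum_{S} \det(B_\ast[S])^2,
$$
where the sum is over subsets $S \subseteq E$ with $|S| = |V_{\mathrm{int}}|$ and $B_\ast[S]$ is the corresponding square submatrix.

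The final step is the combinatorial identification: $\det(B_\ast[S]) \in \{-1,0,+1\}$ for every such $S$, and it is nonzero precisely when the edge set $S$ is a spanning tree of $\widetilde\G$. For this I would argue that if $S$ contains a cycle, the columns of $B_\ast[S]$ satisfy the linear relation coming from that cycle, so the determinant vanishes; and if $S$ is acyclic with $|V_{\mathrm{int}}|$ edges then (since $\widetilde\G$ has $|V_{\mathrm{int}}|+1$ vertices) it is a spanning tree, and one can order the vertices by peeling off leaves to make $B_\ast[S]$ triangular with $\pm 1$ diagonal entries, so its determinant is $\pm 1$ and its square is $1$. Summing, $\det(\D_\G) = \#T(\widetilde\G) = \#T(\G)$ (the trees of $\widetilde\G$ being exactly the spanning subgraphs of $\G$ counted by the UST measure). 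I do not expect a genuine obstacle here; the only point requiring a little care is the sign and size convention relating $\D_\G$ of Definition \ref{def:Laplacian} to the positive combinatorial Laplacian $L_{\mathrm{int}}$, together with the hypothesis that the wired boundary is non-empty, which guarantees $\ast$ exists so that $L_{\mathrm{int}}$ is a proper principal minor (equivalently, that $\widetilde\G$ is connected with a genuinely deleted vertex, so the determinant is not forced to be zero by the all-ones kernel of $L$).
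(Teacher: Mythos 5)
Your proposal is correct and follows essentially the same route as the paper: the key step in both is the observation that $\D_\G$ (after contracting the wired boundary to the single vertex $\ast$ and deleting the free vertices) is the principal minor of the full Laplacian of $\widetilde{\G}$ obtained by erasing the row and column of $\ast$, the paper then simply citing the classical cofactor form of the matrix-tree theorem where you instead prove it in full via the incidence matrix and Cauchy--Binet. The only point to watch is the sign convention of Definition \ref{def:Laplacian} (under which $\D_\G$ is minus the positive combinatorial Laplacian), which both you and the paper treat only up to absolute value.
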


We refer the interested reader to e.g. Theorem 1.19 in \cite{HarHirMos_GraphTheory} for the proof of another version of this theorem.

\subsubsection{Radon-Nikodym derivatives of USTs}\label{subsec:spandet}

Let us get back to our annular set-up: consider a discrete annulus $A^\d$, a cut $c^\d$, and two cuts $d_1^\d$ and $d_2^\d$ that disconnect $c^\d$ from points of $A^\d$ that are at distance more than $\e$ from $c^\d$.

\begin{lem}\label{lem:DRN-expl}
The exploration process $e^\d_{A^\d}$ is absolutely continuous with respect to the process $e^\d_{A^\d\sm c^\d}$, until the first hitting time $T_{c^\d}^{\e}$ of the $\e$-neighborhood of the cut $c^\d$.

Explicitly, letting $\g^\d$ be a curve defined until it first hits the $\e$-neighborhood of the cut $c^\d$, we have (for any mesh size $\d \ll \e$):
$$
\frac{\P[e^\d_{A^\d}\mathrm{\ starts\ by\ }\g^\d]}{\P[e^\d_{A^\d\sm c^\d}\mathrm{\ starts\ by\ }\g^\d]}  = \frac{\det(\Id - \Ha^{c^\d\rightarrow d^\d}_{A^\d\sm{K^\d}} \circ \Ha^{d^\d\rightarrow c^\d}_{A^\d\sm{K^\d}})} {\det(\Id - \Ha^{c^\d\rightarrow d^\d}_{A^\d} \circ \Ha^{d^\d\rightarrow c^\d}_{A^\d})},
$$
where $K^\d$ is the image of $\g^\d$, and carries a natural Neumann arc $\B_{K^\d} = \overline{a^\d,\g^\d(T_{c^\d}^{\e})}$.
\end{lem}

\begin{proof}
The Radon-Nikodym derivative is a ratio of numbers of trees, that can be rewritten thanks to the matrix-tree theorem:
$$
\frac{\P[e^\d_{A^\d}\mathrm{\ starts\ by\ }\g^\d]}{\P[e^\d_{A^\d\sm c^\d}\mathrm{\ starts\ by\ }\g^\d]}  = \frac{\frac{\#T(A^\d\sm K^\d)}{\#T(A^\d)}}{\frac{\#T(A^\d\sm (K^\d\cup c^\d))}{\#T(A^\d\sm c^\d)}} = \frac{\det(\D_{A^\d\sm K^\d})\det(\D_{A^\d\sm c^\d})}{\det(\D_{A^\d})\det(\D_{A^\d\sm (K^\d\cup c^\d)})}.
$$
Cutting along $d_1^\d$ and $d_2^\d$ disconnects $c^\d$ from $K^\d$ in the domain $A^\d$. Hence, we have
\begin{equation}\label{eq:1}
1 = \frac{\det(\D_{A^\d\sm (K^\d\cup c^\d\cup d^\d)})\det(\D_{A^\d\sm d^\d})}{\det(\D_{A^\d\sm (c^\d\cup d^\d)})\det(\D_{A^\d\sm (K^\d\cup d^\d)})}.
\end{equation}
We can thus multiply the above Radon-Nikodym derivative by (\ref{eq:1}), to get an expression involving $8$ determinants of Laplacians. Using Lemma \ref{lem:ratio-det} twice yields the claim:
$$
\frac{\P[e^\d_{A^\d}\mathrm{\ starts\ by\ }\g^\d]}{\P[e^\d_{A^\d\sm c^\d}\mathrm{\ starts\ by\ }\g^\d]}  = \frac{\det(\Id - \Ha^{c^\d\rightarrow d^\d}_{A^\d\sm{K^\d}} \circ \Ha^{d^\d\rightarrow c^\d}_{A^\d\sm{K^\d}})} {\det(\Id - \Ha^{c^\d\rightarrow d^\d}_{A^\d} \circ \Ha^{d^\d\rightarrow c^\d}_{A^\d})}.
$$
\end{proof}
\subsection{A brief word on SLE}

Chordal Schramm-Loewner evolutions (SLEs) form a one parameter family of conformally invariant random curves defined in simply-connected domains of the complex plane, with prescribed starting point and endpoint on the boundary. They are not simple curves, but will not cross their past paths (when touching their past, they will bounce off it).

Let us first give the definition of $\SLE_\kappa$ in the upper half-plane $(\mathbb{H},0,\infty)$. It is a random curve $\g : \R^+ \rightarrow \overline{\mathbb{H}}$, growing from the boundary point $0$ to $\infty$.

Suppose that such a curve $\g$ is given to us. Let $H_s$ be the unbounded connected component of $\mathbb{H} \sm \g([0,s])$, and consider the uniformizing map $g_s : H_s \rightarrow \mathbb{H}$, normalized at $\infty$ such that $g_s(z) = z + 2a_s/z + o(1/z)$. The quantity $a_s$ is the so-called half-plane capacity of the compact hull $K_s=\H\sm H_s$ generated by $\g([0,s])$. Under additional assumptions\footnote{The curve $\gamma$ needs to be instantaneously reflected off its past and the boundary in the following sense: the set of times $s$ larger than some time $s_0$ that $\gamma$ spends outside of the domain $H_{s_0}$ should be of empty interior.}, the half-plane capacity $a_s$ is an increasing bijection of $\R^+$, and so we can reparametrize our curve by $t=a_s$.

With this parametrization, the family of functions $g_t$ solves the Loewner differential equation:
\begin{equation} \nonumber
\left\{\begin{array}{l}g_0(z) = z\\
\partial_t g_t(z) = \frac{2}{g_t(z)-W_{t}},\end{array}\right.
\end{equation}
where $W_t= g_t(\g_t)$ is the (real-valued) driving function.

Conversely, starting from a continuous real-valued driving function, it is always possible to solve the Loewner equation, and hence to recover a family of compact sets $K_t$ in $\overline{\H}$, growing from $0$ to $\infty$, namely $K_t$ is the set of initial conditions $z$ that yield a solution $g_u(z$) blowing up before time $t$. It may happen that the compact sets $K_t$ coincides with the set of hulls generated by the trace of a curve $\g$, which can in this case be recovered as $\g_t= \lim_{\e\rightarrow 0} g_t^{-1}(W_t +i \e)$.

\begin{prop}\label{prop:SLE}
The process $\SLE_\kappa^\H(0\rightarrow\infty)$ is the curve obtained from the solution of the Loewner equation with driving function $W_t = \sqrt{\kappa} B_t$, where $B_t$ is a standard Brownian motion.
\end{prop}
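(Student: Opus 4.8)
The plan is to treat the statement as having two logically separate halves: first, that the random curve picked out by conformal invariance and the domain Markov property is \emph{forced} to be driven by $\sqrt{\kappa}B_t$; and second, that the Loewner chain driven by $\sqrt{\kappa}B_t$ is genuinely generated by a continuous curve, so that the sentence even makes sense. I would make the first half the core argument and quote the second.

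For the first half, I would start from the two defining features of $\SLE_\kappa^\H(0\to\infty)$ as a conformally invariant random curve: (i) its law is invariant under the conformal automorphisms of $(\H,0,\infty)$, i.e. under the dilations $z\mapsto\lambda z$ and the reflection $z\mapsto-\bar z$; and (ii) the domain Markov property, i.e. conditionally on the hull $K_t$ (equivalently on $g_t$ and on $(W_s)_{s\le t}$), the image curve $g_t(\gamma[t,\infty))-W_t$ has the law of the original curve. Parametrizing by half-plane capacity and reading (ii) through the Loewner flow $\partial_t g_t = 2/(g_t-W_t)$, the domain Markov property says exactly that $(W_{t+s}-W_t)_{s\ge0}$ is independent of $(W_u)_{u\le t}$ with the same law as $(W_s)_{s\ge0}$; that is, $W$ has stationary independent increments, and the dilation invariance in (i) makes it Brownian-scaling invariant. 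Granting that $W$ is continuous---which is in any case part of what it means for the chain to come from a curve---L\'evy's characterization gives $W_t=\sigma B_t+\theta t$ for a standard Brownian motion $B$ and constants $\sigma\ge0$, $\theta\in\R$; applying the reflection $z\mapsto-\bar z$ turns $W$ into $-W$, so by (i) $W\overset{d}{=}-W$ and hence $\theta=0$. Writing $\kappa=\sigma^2$ yields $W_t=\sqrt{\kappa}B_t$.

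For the second half one must check that the Loewner chain driven by $W_t=\sqrt{\kappa}B_t$ is almost surely generated by a curve, i.e. that $\gamma_t=\lim_{\varepsilon\to0}g_t^{-1}(W_t+i\varepsilon)$ exists and is continuous in $t$ with $K_t=\H\setminus H_t$. For $\kappa\ne8$ this is the Rohde--Schramm theorem: one bounds a well-chosen moment of $|(g_t^{-1})'(W_t+iy)|$ using the reverse Loewner flow, deduces a modulus of continuity for $y\mapsto g_t^{-1}(W_t+iy)$, and upgrades it to a.s. H\"older continuity of the trace by a Kolmogorov/Borel--Cantelli argument; for $\kappa=8$ the trace is recovered as the scaling limit of the UST Peano curve / loop-erased walk of \cite{LSW_LERW}. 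Since only $\kappa\in\{2,8\}$ (and $\SLE_8(\cdot)$ variants) enter this paper, the $\kappa=8$ input suffices and is quoted directly. The honest way to present all of this is as a definition followed by a theorem: \emph{define} $\SLE_\kappa$ by the driving function $\sqrt{\kappa}B_t$, record that it satisfies (i)--(ii) and that Schramm's argument above shows it is the \emph{only} such process, and cite Rohde--Schramm and \cite{LSW_LERW} for the existence of the trace. The one genuinely hard point---and the place where circularity must be avoided---is exactly this existence of the trace: the moment estimates degenerate as $\kappa\to8$, and continuity of $W$ cannot really be assumed before the curve is known to exist, which is why in practice the Loewner-chain description is taken as the definition and being-a-curve is proved afterwards.
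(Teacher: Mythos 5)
The paper offers no proof of this proposition at all: it sits in the background section ``A brief word on SLE'' and functions there as a \emph{definition} of $\SLE_\kappa^\H(0\to\infty)$ (the preceding paragraph even says ``Let us first give the definition of $\SLE_\kappa$\dots''), with the existence of the trace implicitly deferred to the literature. So there is nothing in the paper to compare your argument against line by line; what you have written is the standard justification one would import from Schramm, Rohde--Schramm, and Lawler--Schramm--Werner, and it is essentially correct. Your first half (conformal invariance plus the domain Markov property force $W_t=\sqrt{\kappa}B_t$) actually proves more than the proposition asserts --- the statement does not claim uniqueness --- but it is the right way to see why this driving function and no other is the natural one, and your handling of the reflection symmetry to kill the drift is correct. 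Your second half correctly isolates the only genuinely nontrivial content of the sentence, namely that the chain driven by $\sqrt{\kappa}B_t$ is generated by a continuous curve, and correctly notes that for this paper only $\kappa=2$ and $\kappa=8$ matter, with $\kappa=8$ being precisely the case excluded from Rohde--Schramm and supplied instead by the UST convergence of \cite{LSW_LERW} --- which is consistent with how the paper itself leans on Theorem \ref{prop:USTcvSLE} throughout. Your closing remark about the circularity (one cannot assume $W$ is continuous before knowing the curve exists, so the Loewner-chain description must be taken as the definition) is exactly the reading the paper implicitly adopts. One minor point: once you grant continuity and stationary independent increments, L\'evy's characterization already gives $\sigma B_t+\theta t$ without invoking scaling; the dilation invariance is only needed either to avoid assuming continuity or to see that $\kappa$ is a genuine parameter rather than a reparametrization artifact, and it would be worth saying which role you intend it to play.
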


The law of $\SLE_\kappa^\H(0\rightarrow\infty)$ is invariant by scaling. Hence, given a simply-connected domain $(\O,a,b)$ with two marked points on its boundary, we can define $\SLE_\kappa^\O(a\rightarrow b)$ to be the image of an $\SLE_\kappa^\H(0\rightarrow\infty)$ by any conformal bijection $(\H,0,\infty) \rightarrow (\O,a,b)$.

SLE curves have a spatial Markov property built into them similar to the one satisfied by the exploration process of the UST (Proposition \ref{prop:USTMarkov}):

\begin{prop}\label{prop:markov-sle} The law of $\SLE_\kappa^\H(0\rightarrow\infty)$ after a stopping time $\tau$ conditioned on its past has the law of an $\SLE_\kappa^{H_\tau}(\g_\tau\rightarrow\infty)$.
\end{prop}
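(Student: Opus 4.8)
The plan is to obtain this as the \emph{domain Markov property} of chordal SLE by transferring the strong Markov property of Brownian motion through the Loewner correspondence. Let $(\F_t)$ be the filtration generated by the driving process $W$; since $W_t=g_t(\g_t)$ and, conversely, $\g$ is recovered from $W$ by solving the Loewner equation, this is the same as the filtration generated by the curve $\g$. Fix an $(\F_t)$-stopping time $\tau$. The goal is to describe the conditional law, given $\F_\tau$, of the remaining curve $(\g_{\tau+s})_{s\geq 0}$.

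First I would introduce the shifted and recentered Loewner flow. With $H_\tau$ the unbounded component of $\H\sm\g([0,\tau])$, $g_\tau:H_\tau\to\H$ its hydrodynamic normalization, and $W_\tau=g_\tau(\g_\tau)$, set
$$
\hat g_s(z)=g_{\tau+s}\bigl(g_\tau^{-1}(z+W_\tau)\bigr)-W_\tau .
$$
The key lemma to prove is that $(\hat g_s)_{s\geq 0}$ is itself a hydrodynamically normalized Loewner chain in $\H$, run in the capacity parametrization, with driving function $\hat W_s:=W_{\tau+s}-W_\tau$; equivalently $\hat g_0=\Id$, $a_{\tau+s}=a_\tau+s$, and $\partial_s\hat g_s(z)=2/(\hat g_s(z)-\hat W_s)$. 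This is a direct chain-rule computation from $\partial_t g_t(z)=2/(g_t(z)-W_t)$, together with the facts that $K_{\tau+s}$ decomposes as $K_\tau$ union a hull growing inside $H_\tau$ and that half-plane capacity is additive under this composition, so the time parametrization is preserved. Its trace is $\hat\g_s=g_\tau(\g_{\tau+s})-W_\tau$.

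Next I would invoke the strong Markov property of Brownian motion. Since $W_t=\sqrt{\kappa}B_t$ and $\tau$ is an $(\F_t)$-stopping time, conditionally on $\F_\tau$ the process $\hat W_s=\sqrt{\kappa}(B_{\tau+s}-B_\tau)$ has the law of $\sqrt{\kappa}\tilde B_s$ for a standard Brownian motion $\tilde B$ independent of $\F_\tau$. By Proposition \ref{prop:SLE}, the hull family $(\hat K_s)$ and its trace $\hat\g$ are therefore, conditionally on $\F_\tau$, distributed as $\SLE_\kappa^\H(0\to\infty)$. Undoing the conformal change of coordinates, $\g_{\tau+s}=g_\tau^{-1}(\hat\g_s+W_\tau)$ is the image of this $\SLE_\kappa^\H(0\to\infty)$ under the conformal bijection $(\H,0,\infty)\to(H_\tau,\g_\tau,\infty)$, $z\mapsto g_\tau^{-1}(z+W_\tau)$ (note $g_\tau^{-1}(W_\tau)=\g_\tau$ and $g_\tau^{-1}(\infty)=\infty$); by the scale/conformal-invariant definition of $\SLE_\kappa$ in a general simply-connected domain recalled just above the proposition, this is precisely an $\SLE_\kappa^{H_\tau}(\g_\tau\to\infty)$, as claimed.

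The one step that needs genuine care is the first one: verifying that the shifted maps $\hat g_s$ satisfy the Loewner equation with the recentered driving function $\hat W$ in the capacity parametrization, i.e.\ the stationarity of the Loewner flow, including additivity of half-plane capacity under composition. Everything else is the strong Markov property packaged through conformal invariance, which is classical. Throughout, I would take the existence of the trace of $\SLE_\kappa$ as part of the standing assumptions, so that the statement is meaningful.
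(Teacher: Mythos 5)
Your argument is correct and is the standard one: the paper states Proposition \ref{prop:markov-sle} as classical background without giving a proof, and your derivation (recentering the Loewner flow at time $\tau$, checking via the chain rule and additivity of half-plane capacity that $\hat g_s$ is again a normalized Loewner chain driven by $\hat W_s=W_{\tau+s}-W_\tau$, then applying the strong Markov property of Brownian motion and the conformal-image definition of $\SLE_\kappa$ in $H_\tau$) is exactly the textbook proof one would supply. You also correctly flag the two points that need care — the capacity-additivity/stationarity computation and the existence of the trace after time $\tau$ — so nothing is missing.
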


Certain SLEs arise as the scaling limits of discrete curves coming from statistical mechanics. In particular, $\SLE_8$ appears in the scaling limit of USTs. Let $(\O,x_0,a,b)$ be a simply-connected domain of the plane, with two points $a,b$ marked on its boundary. Let us consider its natural approximation $\O^\d$ at mesh size $\d$, and look at the UST $T^\d$ on $\O^\d$, with wired boundary conditions on the counterclockwise arc $\arcperso{a^{\d}b^{\d}}$, and free boundary conditions on $\arcperso{b^{\d}a^{\d}}$. Let $e^\d$ be the exploration process of $T^\d$ going from $a^{\d}$ to $b^{\d}$.

\begin{thm}[\cite{LSW_LERW}, Theorem 4.4]\label{prop:USTcvSLE}
The discrete curve $e^\d$ converges\footnote{In the sense of convergence of Loewner driving functions (after uniformization to $\H$), for the topology of uniform convergence on compact sets of times. Convergence of the curve itself follows when the boundary of $\O$ is smooth enough, e.g. piecewise $\C^1$.} towards $\SLE_8^\O(a\rightarrow b)$, when the mesh size $\d$ goes to $0$, uniformly on Carath\'eodory-compact sets of decorated domains $(\O,x_0,a,b)$.
\end{thm}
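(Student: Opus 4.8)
The plan is to follow the now-standard scheme for proving convergence of a discrete interface to a Schramm--Loewner evolution: encode the curve through its Loewner driving function, establish tightness of the laws, and identify every subsequential limit by means of a martingale observable. First I would reparametrize the exploration in terms of the unexplored region: after mapping the complement of $e^\d([0,t])$ in $\O$ back to $(\H,0,\infty)$ by the uniformizing map $g^\d_t$, record the image $W^\d_t$ of the tip $e^\d(t)$, so that $e^\d$ is described (in the continuum parametrization by half-plane capacity) by the real-valued driving function $W^\d_\cdot$. The goal is to show $W^\d_\cdot \Rightarrow \sqrt 8\, B_\cdot$ and that the curves themselves converge in $\Top$; by Proposition \ref{prop:SLE} the driving function $\sqrt 8\, B$ is exactly that of $\SLE_8$. (Historically, Lawler--Schramm--Werner reach this through the dual statement that the tree branches, which are loop-erased walks, converge to $\SLE_2$; I will instead argue directly on the Peano curve, which is what the tools assembled in the excerpt are tailored to.)

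The engine of the identification is a discrete harmonic observable that the domain Markov property of the UST turns into a martingale. Fix a bulk point $z$ and let $M^\d_t(z)$ be the discrete harmonic measure from $z$ of a marked boundary arc in the unexplored domain $\O^\d\setminus e^\d([0,t])$, carrying the updated boundary data --- wired (Dirichlet) on the right side of the explored curve together with $\arcperso{ab}$, free (Neumann) on its left side together with $\arcperso{ba}$ --- read off in the coordinate $g^\d_t(z)$. Proposition \ref{prop:USTMarkov} says that, conditionally on $e^\d([0,t])$, the tree in the unexplored region is again a UST with these boundary conditions; combined with the harmonicity of $M^\d_t(z)$ in the remaining domain and optional stopping for the underlying reflected random walk, this makes $(M^\d_t(z))_t$ a martingale for the exploration filtration. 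Crucially, Lemma \ref{lem:harm-meas-conv} together with Remark \ref{rem:harm-meas-conv-bound} supplies exactly the convergence of this discrete harmonic measure to its continuous counterpart, uniformly on Carath\'eodory-compact families of decorated domains; hence along any subsequential scaling limit of $e^\d$ the continuous observable $M_t(z)$ is again a martingale.

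It then remains to extract tightness and to pin down $\kappa$. Tightness of the laws of $e^\d$ in $\Top$, and the fact that subsequential limits are genuine Loewner curves with a continuous driving function, follow from uniform regularity estimates: the Beurling estimate (Lemma \ref{lem:beurling}) and the Harnack inequality (Lemma \ref{lem:harnack}), supplemented by Schramm's crossing estimates for the UST (Theorem 11.1 of \cite{S0}, valid under the standing piecewise-$\C^1$ boundary assumption), rule out the bottlenecks that would prevent the driving function from being continuous. On a subsequential limit I would write $M_t(z)=F(g_t(z),W_t)$ for the explicit conformal image $F$ of the harmonic observable in $(\H,W_t,\infty)$, substitute the Loewner flow $\partial_t g_t(z)=2/(g_t(z)-W_t)$, and apply It\^o's formula assuming $W_t$ is a semimartingale. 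The martingale property of $M_t(z)$ forces the finite-variation part to vanish for every $z$; separating the contribution of the drift of $W_t$ from that of its quadratic variation in this identity yields that $W_t$ has zero drift and $d\langle W\rangle_t=8\,dt$, i.e. $W_t=\sqrt 8\,B_t$ and $\kappa=8$. Since the limit is thereby uniquely characterized, every subsequence converges to the same law and the whole family $e^\d$ converges; uniformity over Carath\'eodory-compact sets is inherited from the uniformity built into Lemmas \ref{lem:harm-meas-conv}, \ref{lem:beurling} and \ref{lem:harnack}.

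The main obstacle is the regularity/tightness step, which is precisely where the $\C^1$ boundary hypothesis is forced: showing that subsequential limits are parametrizable by half-plane capacity with a continuous driving function --- rather than degenerating into bottlenecks or failing to be curves --- rests on the delicate UST estimates of Schramm and is far more involved than the algebraic It\^o identification. A secondary difficulty is ensuring the observable is rich enough: one must let $z$ range (or combine two observables) so that the vanishing-drift identity genuinely determines \emph{both} the zero drift and the quadratic variation of $W_t$, rather than only one linear combination of them, and one must justify a priori that $W_t$ is a semimartingale before invoking It\^o's formula --- again a consequence of the tightness together with the martingale structure.
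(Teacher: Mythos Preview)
The paper does not prove this theorem: it is quoted verbatim from Lawler--Schramm--Werner \cite{LSW_LERW} (their Theorem~4.4) and used as a black box, so there is no ``paper's own proof'' to compare against. Your task was therefore ill-posed in this instance.

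That said, your sketch is a faithful high-level outline of how the result is actually obtained in \cite{LSW_LERW}: encode the interface by its Loewner driving function, use the discrete harmonic measure observable (which is exactly their Proposition~4.2, cited here as Lemma~\ref{lem:harm-meas-conv}) together with the domain Markov property of the UST to produce an approximate martingale, and identify the limiting driving function as $\sqrt{8}\,B$. One technical point deserves comment: Lawler--Schramm--Werner do \emph{not} assume the limiting $W$ is a semimartingale and then apply It\^o's formula as you suggest. Instead they work at the discrete level, showing that conditional increments $\E[W^\d_{t+h}-W^\d_t\mid\F_t]$ and $\E[(W^\d_{t+h}-W^\d_t)^2-8h\mid\F_t]$ are $o(1)$ uniformly, and conclude $W=\sqrt{8}\,B$ via a Skorokhod-embedding/L\'evy-characterization argument that bypasses the a~priori semimartingale issue entirely. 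Your acknowledgement of this as a ``secondary difficulty'' is correct, but the fix is not to justify semimartingality --- it is to avoid It\^o's formula on the limit altogether. Apart from this, and the fact that the hard regularity input (continuity of the limiting curve, parametrizability by capacity) occupies the bulk of \cite{LSW_LERW} rather than being a side remark, your outline is sound.
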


Some geometric properties of SLE$_8$ can be easily deduced from this convergence, in particular that it has to be a reversible space-filling curve.

We will need a loop version of $\SLE_8$, where the starting and end points are the same.

\begin{prop}\label{prop:bubbleSLE}
Consider a Jordan domain $(\O,a,b)$, and let the counterclockwise arc $\arcperso{ba}$ shrink to $a$. $\SLE_8^\O(a\rightarrow b)$ then converges (weakly for the topology of uniform convergence up to reparametrization) towards a random counterclockwise loop, that we call counterclockwise $\SLE_8(2)$.
\end{prop}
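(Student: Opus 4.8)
The plan is to realize the limiting loop as an image of chordal $\SLE_8$ under a degenerating conformal map, and to obtain convergence by combining a compactness (tightness) argument with identification of all subsequential limits. First I would fix a reference configuration, say $(\H, 0, \infty)$ for the chordal picture, but since we want $a$ and $b$ to merge it is more convenient to work in a fixed domain $(\O, a)$ with the second marked point $b = b_n$ running along $\partial\O$ toward $a$ from the clockwise side (so the wired arc $\arcperso{a b_n}$ exhausts $\partial\O$). Write $\g^{(n)}$ for $\SLE_8^\O(a \to b_n)$. The key structural input is the convergence of the UST exploration process to chordal $\SLE_8$ (Theorem \ref{prop:USTcvSLE}), which is uniform on Carath\'eodory-compact sets of decorated domains $(\O, x_0, a, b)$: this uniformity is exactly what lets us send $b_n \to a$ while keeping control, provided we stay away from the singular coincidence in the right way.

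The main steps, in order, would be: \emph{(i) Tightness.} Show the family $\{\g^{(n)}\}$ is tight for the topology $\Top$ of uniform convergence up to reparametrization. For $\SLE_8$ this follows from the standard a priori estimates on Loewner chains with $\sqrt{8}\,B_t$ driving function (Aizenman--Burchard / Kemppainen--Smirnov type moment bounds on crossing events), which are uniform in the location of the target point because the driving function law does not depend on the target; equivalently one can transfer tightness from the discrete UST exploration processes using Schramm's $\C^1$-boundary estimates alluded to in the footnote to Lemma \ref{lem:charact-cv-expl}. \emph{(ii) Subsequential limits are loops.} Any weak limit $\g$ is supported on curves from $a$ back to $a$; space-fillingness of $\SLE_8$ (a consequence of Theorem \ref{prop:USTcvSLE}) passes to the limit and forces $\g$ to be a space-filling loop rooted at $a$. \emph{(iii) Identification.} Characterize the limit by its Loewner driving function: away from the terminal time, $\SLE_8^\O(a \to b_n)$ run up to the first time it disconnects a fixed compact set from $b_n$ has a driving function that converges (locally uniformly in time) to that of $\SLE_8$ targeted ``at its own starting point,'' i.e. the $\SLE_8(2)$ of the statement — here one uses that the Loewner flow depends continuously on the target point through the drift term, and that the coordinate change from $(\O, a, b_n)$ to a fixed frame degenerates in a controlled, explicit way (the force point runs to the seed). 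Uniqueness of the subsequential limit then follows from Lemma \ref{lem:charact-cv-expl}-type reasoning: the stopped laws at a rich enough family of disconnection/hitting times determine the law of the loop, and all of these converge. Finally, weak convergence of the whole sequence follows from tightness plus uniqueness of subsequential limits.

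I expect the main obstacle to be step (iii), the identification of the limit near the terminal point $a$: as $b_n \to a$, the chordal $\SLE_8(0 \to b_n)$ develops its behavior near the target on a scale that shrinks with $n$, so controlling the driving function (and the geometry of the trace) \emph{globally in time}, including as $t$ approaches the capacity at which the curve swallows $b_n$, is delicate — one must rule out that mass escapes near $a$ or that the loop fails to close. The clean way around this is to only ever characterize the limit on compact time intervals strictly before the disconnection time $T$ of a fixed reference point (exactly the device used for Theorems \ref{thm:cv-expl} and \ref{thm:cv-explbubble}), invoke the space-filling/reversibility properties of $\SLE_8$ to control the terminal phase qualitatively, and let Lemma \ref{lem:charact-cv-expl} do the rest; combined with the fact that Theorem \ref{thm:cv-explbubble} itself is stated in terms of this very object (``counterclockwise $\SLE_8(2)$''), this makes the construction consistent and the weak limit unambiguous.
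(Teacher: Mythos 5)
Your proposal is correct, and its decisive step (iii) contains exactly the paper's argument: fix an observation point $o\in\partial\O$, uniformize $(\O,a,o)\to(\H,0,\infty)$, and observe via the coordinate change of \cite{SchWi_SLECoordChange} that the driving function of the image of $\SLE_8^\O(a\to b)$, run up to the disconnection time $\tau_o$ of $o$, is the explicit functional $F_t(X)=\sqrt{8}\,X_t-\int_0^t (\sqrt{2}\,X_s)^{-1}\,ds$ of a Bessel process of dimension $2$ started from $\phi(b)$; the convergence as $b\to a$ then reduces to continuity of the Bessel$(2)$ law in its initial condition down to $0^+$, using that $\int_0^t X_s^{-1}ds$ remains a.s.\ finite. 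Where you differ is in the packaging: the paper obtains convergence of the driving functions directly from this Bessel limit and does not run a tightness-plus-identification-of-subsequential-limits scheme, nor does it appeal to Lemma \ref{lem:charact-cv-expl} (which concerns cuts of an annulus and is not the natural uniqueness device in this chordal degeneration). Your extra tightness step is not wasted, however: for $\kappa=8$, convergence of driving functions does not by itself yield convergence of traces in the topology $\Top$, and the paper's closing sentence (``one can then check that this does imply convergence of the whole curve'') is precisely where an a priori equicontinuity or crossing-estimate input of the kind you describe is needed, both to upgrade driving-function convergence to curve convergence before $\tau_o$ and to control the terminal phase by varying $o$. So your route is heavier than the paper's but, on the step both treatments leave implicit, somewhat more complete.
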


\begin{proof}
Let us fix a spectator point $o \in \partial \O$ distinct from $a$, as well as a uniformizing map $\phi_o : (\O,a,o) \rightarrow (\H,0,\infty)$. Stop $\SLE_8^\O(a\rightarrow b)$ at the first time $\tau_o$ when its trace disconnects $o$ from $b$, and push it by $\phi_o$ to obtain a random curve $\gamma$ growing in the upper half-plane. The driving function $W_t$ of $\gamma$ can be seen to be an explicit functional $F_t(X)$ of a Bessel process $X$ of dimension $2$ started from $X_0=\phi_o(b)$, namely $F_t(X) = \sqrt{8} X_t - \int_0^t \frac{1}{\sqrt{2} X_s} ds$ (see e.g. \cite{SchWi_SLECoordChange}).

Now, when the arc $\arcperso{ba}$ shrinks to the point $a$, the process $X$ converges to a 2-dimensional Bessel process started from $0$. The quantity $\int_0^t \frac{1}{X_s} ds$ being a.s. finite (even when $X_0 = 0^+$), the driving function $W_t=F_t(X)$ also converges. This implies convergence of the curve $\phi_0\left(\SLE_8^\O(a\rightarrow b)\right)$ up to time $\tau_o$, which in turn gives convergence of the curve $\SLE_8^\O(a\rightarrow b)$ up to time $\tau_o$. If we now let the counterclockwise arc $\arcperso{oa}$ shrink to $a$, we get convergence of the whole curve $\SLE_8^\O(a\rightarrow b)$.
\end{proof}

The random curve $\SLE_8(2)$ also fits into a two-parameter family of solutions to Loewner SDEs (see e.g. \cite{LSW3}), and the notation refers to the values of the parameters $\kappa$ and $\rho$.

\section{Tightness}\label{precomp}

The goal of this section is to establish the tightness of the UST exploration process. Wilson proved in \cite{Wi} that the branches of the UST can be constructed as loop-erased random walks. As a consequence, one can use simple random walk estimates - or equivalently discrete harmonic analysis - to get a priori estimates on the UST and its exploration process, which in particular imply that the exploration process of the UST (in the bulk or close to a piecewise-$\mathcal{C}^1$ boundary) form tight families.

We are going to follow very closely Schramm's argument in \cite{S0}, where he considered the simply-connected setting. There, Schramm used the fact that the graph dual to the uniform spanning tree is itself a uniform spanning tree, and hence can also be generated via Wilson's algorithm. However, in a non-simply-connected domain, graphs dual to a spanning tree are not trees, and cannot be generated by Wilson's algorithm (it is however possible to modify the original algorithm to generate these dual graphs, see \cite{KasKen_RandomCurves}). Consequently, some of the proofs in \cite{S0} will not exactly work as is. However, the use of stochastic comparison will easily allow us to transfer these estimates - or at least their proofs - to our setting.

First of all, let us state the stochastic comparison lemma, which is a consequence of negative correlation for the UST on a general graph $\G$.

\begin{lem}[Stochastic comparison]\label{lem:stocdom}
Let $\mathcal{I}$ be a collection of edges of $\G$, and let $\mathcal{I}_1 \subset \mathcal{I}_2$ be two subsets of $\mathcal{I}$ that can be completed in spanning trees of $\G$ by adding edges of $\mathcal{I}^c$.

Let $T_1$ (resp. $T_2$), be the uniform spanning tree $T$ on $\G$, conditioned on $T \cap \mathcal{I} = \mathcal{I}_1$ (resp. on $T \cap \mathcal{I} = \mathcal{I}_2$).

Then, there exists a coupling of $T_1$ and $T_2$ such that $T_2\cap\mathcal{I}^c \subset T_1\cap\mathcal{I}^c$ almost surely.
\end{lem}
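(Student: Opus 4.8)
The plan is to deduce the stochastic comparison lemma from the standard negative correlation (edge-negative-association) property of the uniform spanning tree, applied to the appropriate conditioned measures. First I would recall that for the UST on any finite graph $\G$, the events $\{f\in T\}$ are negatively associated: for any edge $f$ and any event $A$ that is increasing in the remaining edges, $\P[A\mid f\in T]\leq\P[A]$, and more generally the UST is a strongly Rayleigh / negatively associated measure, so that conditioning on a set of edges being present only makes the other edges less likely in the stochastic order. The cleanest route is via Feder--Mihail (negative association of the UST, or of any matroid with the "balanced" property), or via the transfer-current theorem of Burton--Pemantle, which gives an explicit determinantal formula showing negative correlations between distinct edges.

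The key steps, in order. (1) Observe that conditioning the UST $T$ on $T\cap\I=\I_1$ is the same as taking the UST on the graph obtained from $\G$ by contracting the edges of $\I_1$ and deleting the edges of $\I\setminus\I_1$; call this $\G/\I_1$. Likewise $T_2$ is the UST on $\G/\I_2$. (2) Since $\I_1\subset\I_2$, the graph $\G/\I_2$ is obtained from $\G/\I_1$ by further contracting the edges of $\I_2\setminus\I_1$ and deleting the rest of $\I\setminus\I_2$; both operations are among those under which the UST is monotone in the right direction. Contracting an edge $e$ stochastically decreases the rest of the tree (Feder--Mihail: $\P[A\mid e\in T]\leq\P[A]$ for increasing $A$), and deleting an edge $e$ (equivalently conditioning $e\notin T$) stochastically increases the rest (by negative association again, in the other direction). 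Both of these push $T\cap\I^c$ in the same direction, namely $T_2\cap\I^c$ is stochastically dominated by $T_1\cap\I^c$ as a subset of $\I^c$. (3) Strassen's theorem then upgrades stochastic domination to the existence of the claimed monotone coupling $T_2\cap\I^c\subset T_1\cap\I^c$ almost surely.

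The main obstacle — really the only subtle point — is making sure the monotonicity is applied in the correct direction at each elementary operation and that deleting an edge of $\I\setminus\I_2$ versus contracting an edge of $\I_2\setminus\I_1$ indeed both decrease $T\cap\I^c$ stochastically. Contracting $e$ means $e$ is forced into the tree, which by negative association makes each other edge less likely, hence decreases $T\cap\I^c$; deleting $e$ means $e$ is forbidden, which makes each remaining edge \emph{more} likely and would \emph{increase} $T\cap\I^c$ — so one must be careful: the statement compares $T_1$ (condition on $\I_1$) with $T_2$ (condition on $\I_2\supset\I_1$), and going from $\I_1$ to $\I_2$ we \emph{add} forced edges (contractions, which decrease the rest) but we do \emph{not} newly forbid any edge of $\I$, since all of $\I\setminus\I_2\subset\I\setminus\I_1$ was already forbidden under both conditionings. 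So in fact only contractions are relevant in the passage $\G/\I_1\to\G/\I_2$, and the conclusion $T_2\cap\I^c\subset T_1\cap\I^c$ follows cleanly. I would then just cite Feder--Mihail (or Benjamini--Lyons--Peres--Schramm) for the negative association statement and Strassen for the coupling, and note that this is exactly the form in which the estimate will be used to transfer Schramm's simply-connected UST bounds to the annular setting.
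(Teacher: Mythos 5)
Your argument is correct and is essentially the paper's own proof, which simply cites the discussion following Remark 5.7 in Benjamini--Lyons--Peres--Schramm for the negative-association statement and Strassen's theorem for the monotone coupling. One small point of phrasing: the edges of $\I_2\setminus\I_1$ are \emph{deleted} in $\G/\I_1$ rather than present there, so the elementary interpolation step is really switching a single edge $e$ from conditioned-absent to conditioned-present, which is handled by the two-sided inequality $\P[A\mid e\in T]\le\P[A]\le\P[A\mid e\notin T]$ for increasing events $A$ not depending on $e$ --- as you in effect observe in your final paragraph.
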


\begin{proof}
There is a discussion of this fact following Remark 5.7 in \cite{BenLyoPerSch_USF}.
For a nice overview of the Strassen domination theorem, we refer to \cite{Lin_StochDom}.
\end{proof}

Let us now investigate the geometry of the wired UST on discrete approximations of a conformal annulus, and in particular the occurrence of certain $n$-arm events\footnote{An $n$-arm event at a point $x$ is the existence of $n$ disjoint branches of the tree - the so-called arms - connecting an infinitesimal neighborhood of $x$ to some points at positive distance. The different arms may or may not be connected to each other at $x$.}.

\begin{lem}[4-arm estimate]\label{lem:4-arm}
For all $\e > 0$, there is a radius $r\in (0,\e)$ such that for all small enough mesh size $\d$, the following holds: the probability that there is a point $p$ $\e$-inside the domain such that there are $4$ disjoint branches of the uniform spanning tree crossing the circular annulus $A(p,r,\e)$ is less than $\e$.
\end{lem}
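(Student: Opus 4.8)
The plan is to transfer Schramm's four‑arm estimate (Theorem 11.1 of \cite{S0}) from the simply‑connected setting to the annular setting by the stochastic comparison lemma (Lemma \ref{lem:stocdom}). The key point is that although a four‑arm event for the wired UST on $A^\d$ is not literally the same as the corresponding event in a simply‑connected subdomain, it can be dominated by one: conditioning the annular tree so that it agrees with some fixed configuration on a collection $\mathcal I$ of edges separating a neighborhood of the central hole from the rest of the annulus turns the tree, away from $\mathcal I$, into a UST on a simply‑connected graph (the component of $A^\d$ cut along a tree‑path winding around the hole), with wired/free boundary conditions inherited from the conditioning. Since having four disjoint branches cross a small annulus $A(p,r,\e)$ is an increasing event in $T\cap\mathcal I^c$ (more edges of the tree means it is easier to find four disjoint arms — here one has to be slightly careful and phrase the arm event monotonically, e.g.\ via connectivity of complementary clusters, exactly as Schramm does), Lemma \ref{lem:stocdom} gives a coupling under which the annular four‑arm event is contained in the simply‑connected one, up to the (small probability) event that the separating configuration is itself atypical.

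First I would make precise the reduction. Fix $\e>0$. Pick an annular ``collar'' $N\subset A$ around the inner boundary, at distance $\gg\e$ from the region where the point $p$ is allowed to live (``$\e$‑inside the domain'' with the relevant points being $2\e$‑away from both boundary components, say). Let $\mathcal I=\mathcal I^\d$ be the edges of $A^\d$ meeting $N^\d$. On the event that $T\cap\mathcal I$ contains a path winding once around the hole — which happens with probability bounded below uniformly in $\d$ by a standard UST/LERW crossing estimate in the collar (Harnack, Beurling, Lemma \ref{lem:harm-meas-conv}) — cut $A^\d$ along that path to obtain a simply‑connected graph $\tilde A^\d$. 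Then I would invoke the spatial Markov property (Proposition \ref{prop:USTMarkov}) to identify the conditional law of $T$ on $\tilde A^\d$ with a UST with mixed boundary conditions, and apply Schramm's estimate: for every $\eta>0$ there is $r\in(0,\e)$ so that, uniformly in small $\d$, the probability of a four‑arm crossing of $A(p,r,\e)$ for some admissible $p$ in this simply‑connected UST is $<\eta$. (Schramm's Theorem 11.1 is stated under a $\C^1$‑boundary hypothesis; the piecewise‑$\C^1$ assumption on $\partial A$ and the fact that the cut path is in the bulk make it applicable here.) Combining, the unconditional annular probability of the four‑arm event is at most $\eta$ plus the probability that $\mathcal I$‑configuration fails to contain a winding path — but this last quantity is \emph{not} small, so instead I must condition on \emph{every} admissible value $\mathcal I_1$ of $T\cap\mathcal I$ that contains a winding path, apply stochastic comparison for each (comparing to a fixed reference $\mathcal I_2$, or directly running Schramm's argument inside each cut domain), and average. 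Choosing $\eta=\e$ then yields the claim.

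The main obstacle I expect is \textbf{phrasing the four‑arm event monotonically and handling the conditioning cleanly}. Schramm's proof of Theorem 11.1 uses crucially that the \emph{dual} of the UST is again a UST, which fails in the annulus; the workaround in this paper is precisely Lemma \ref{lem:stocdom}, but making the domination rigorous requires (i) expressing ``four disjoint tree branches cross $A(p,r,\e)$'' as an increasing function of $T\cap\mathcal I^c$ — equivalently a decreasing function of the complementary edge set, so that adding tree edges only helps — which is subtle because the four arms being \emph{disjoint} is not obviously monotone, so one should instead count arms of the tree \emph{and} of the dual and use the deterministic interface structure (Definition \ref{def:explproc}), turning the statement into one about crossings by $T\cap\mathcal I^c$ of four disjoint subannuli, which \emph{is} monotone; and (ii) checking that conditioning on $T\cap\mathcal I=\mathcal I_1$ with $\mathcal I_1$ containing a winding path genuinely yields a UST with \emph{free/wired} boundary data to which Schramm's bulk‑and‑near‑boundary estimates apply, uniformly over the (many) such $\mathcal I_1$. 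Once the event is arranged to be monotone, the stochastic comparison lemma does the rest, and the uniformity in $\d$ and over Carathéodory‑compact families comes for free from the uniformity already built into Lemmas \ref{lem:harnack}, \ref{lem:beurling}, \ref{lem:harm-meas-conv} and Schramm's estimates.
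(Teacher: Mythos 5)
Your plan diverges from the paper's and, as written, does not close. The central move — cutting $A^\d$ along a path of $T\cap\mathcal I$ that winds once around the hole, so as to land in a simply-connected domain where Schramm's Theorem 11.1/Corollary 10.11 applies — founders on the case you yourself flag: the restriction of the wired UST to a thin collar $N^\d$ around the inner boundary need \emph{not} contain a winding path (the tree may only enter the collar through short radial branches), and you concede that the probability of this failure is not small. Conditioning ``on every admissible $\mathcal I_1$ that contains a winding path and averaging'' simply ignores the remaining (non-negligible) part of the probability space, and the alternative of comparing to a more-wired reference configuration $\mathcal I_2\supset\mathcal I_1$ via Lemma \ref{lem:stocdom} runs the monotonicity the wrong way: the more-wired tree has \emph{fewer} edges in $\mathcal I^c$, so it gives a lower bound, not an upper bound, on the (increasing) four-arm event. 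A further unaddressed issue is that points $p$ that are $\e$-inside $A$ may be arbitrarily close to the random cut path, whose boundary is fractal, so Schramm's near-boundary estimates (stated for $\C^1$ boundaries) do not apply off the shelf in the cut domain.

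The paper's proof sidesteps all of this by exploiting locality rather than trying to simply-connectify the whole annulus: cover $A^\d$ by finitely many simply-connected subdomains $B_i^\d$ such that every ball of radius $\e$ lies in some $B_i^\d$. The trace on $B_i^\d$ of the annular UST is a UST with random boundary conditions induced by the connections outside $B_i^\d$; whatever these are, they are more wired than free, so by Lemma \ref{lem:stocdom} this trace is coupled to lie inside the free-boundary UST on $B_i^\d$. Since the four-arm crossing of $A(p,r,\e)$ is increasing in the edge set and is witnessed entirely inside some $B_i^\d$, any annular four-arm event forces a four-arm event for a simply-connected UST, which Corollary 10.11 of \cite{S0} makes unlikely. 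Your instinct to use stochastic comparison is the right one, but it should be applied to localize the event into a simply-connected window, not to manufacture a global winding cut.
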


\begin{proof}
The equivalent statement in the simply-connected case is proved as Corollary 10.11 in \cite{S0}.

We can use stochastic comparison to transfer Schramm's 4-arm estimate in simply-connected domains to an annulus $A^\d$. Indeed, it is possible to cover $A^\d$ by finitely many simply-connected domains $B_i^\d$ such that any ball of radius $\e$ in $A^\d$ is included in one of the $B_i^\d$. The trace on a subdomain $B_i^\d$ of the UST in $A^\d$ is itself a UST, with certain random boundary conditions given by the connections in the UST outside of $B_i^\d$. Whatever they are, these random boundary conditions are always more wired than free boundary conditions. Hence the trace on $B_i^\d$ of the UST in $A^\d$ has no more edges (Lemma \ref{lem:stocdom}) than the UST in $B_i^\d$ with free boundary conditions.

In other words, any 4-arm event in the annulus $A^\d$ gives a 4-arm event in one of the simply-connected domains $B_i^\d$, and this is very unlikely.
\end{proof}

We just proved that no 4-arm events happen in the scaling limit. Can we say something on 3-arm events ? 3-arm events around a point $x$ are of three different kinds. One possibility is that all three arms are connected at the point $x$. This corresponds to a branching point of the spanning tree, and such events happen almost surely (and densely). Another possibility (dual to the first one) is that no two arms are connected at the central point: this corresponds to a branching point of the dual graph, and, similarly, there are many such points. The last possible configuration would be a path escaping from the neighborhood of a branch: 2 of the arms are connected together at the point $x$, the last one is not. This last kind of 3-arm does not exist in the scaling limit.
\begin{figure}[htb]
\begin{center}
\includegraphics[width = 12cm]{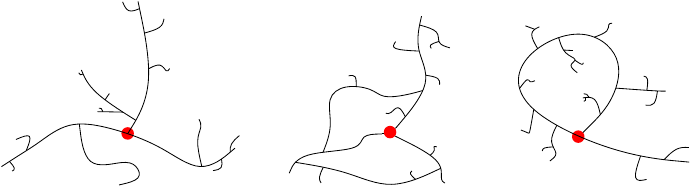}
\caption{The different kinds of 3-arm events.}
\end{center}
\end{figure}

\begin{lem}\label{lem:pointsclosetoabranchareconnectedtoit}
For all $\e > 0$, there is a radius $r\in (0,\e)$ such that for all small enough mesh size $\d$, with probability larger than $1-\e$, if there are three disjoint branches of the uniform spanning tree crossing any circular annulus $A(p,r,\e)$ where $p$ is $\e$-inside the domain, either the three branches are all connected to each other in the ball of radius $r$ around $p$, or they are all connected to each other outside of the ball of radius $\e$ around $p$.
\end{lem}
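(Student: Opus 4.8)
The plan is to reduce the statement to the $4$-arm estimate of Lemma \ref{lem:4-arm}, combined with the observation that the excluded configuration (two arms joined at $x$, the third kept separate in a whole annulus) generates a genuine $4$-arm configuration at a slightly smaller scale. First I would set up the geometry: fix $\e$, and for the radius $r$ to be chosen, consider a point $p$ that is $\e$-inside the annulus $A$ and three disjoint branches $b_1,b_2,b_3$ of the wired UST that cross the circular annulus $A(p,r,\e)$. I want to rule out the ``escaping'' configuration, in which (say) $b_1$ and $b_2$ are connected to each other inside the ball $B(p,r)$ but $b_3$ is not connected to either of them anywhere inside $B(p,\e)$. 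The key point is that in this configuration, the tree edges near $p$ look like a single branch of the tree passing close to $p$ (the $b_1$--$b_2$ piece) while $b_3$ approaches without attaching; by the last-kind-of-$3$-arm discussion, this forces, at some intermediate scale, a $4$-arm picture for the UST-together-with-its-dual.

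The mechanism I would make precise is the following. Because $b_1$ and $b_2$ are joined inside $B(p,r)$ and $b_3$ is separate, there is a dual-tree path that enters between $b_3$ and the $b_1b_2$-cluster. More carefully: since the UST is a tree, the portion of $b_3$ inside the annulus, together with the two arms of the $b_1b_2$-cluster, and (using the planar duality between $T$ and $T^\dagger$) a dual arm squeezed in between $b_3$ and the $b_1b_2$-cluster, yield four disjoint primal-or-dual crossings of an annulus $A(q, r', \e')$ for a suitably chosen center $q$ (close to where $b_3$ comes nearest the cluster) and radii $r' < \e'$ comparable to $r$ and $\e$. A $4$-arm event for the pair (tree, dual tree) is, up to constants in the radii, the same as a $4$-arm event for the tree — indeed Schramm's estimate in \cite{S0} (our Lemma \ref{lem:4-arm}) is really a statement about such mixed arms, and in any case one can dominate the dual configuration by a primal one using Lemma \ref{lem:stocdom} exactly as in the proof of Lemma \ref{lem:4-arm}. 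So: choose $r$ from Lemma \ref{lem:4-arm} applied with a rescaled $\e$ (shrunk by the fixed geometric constant coming from the reduction above), so that the probability of the relevant $4$-arm event anywhere $\e$-inside the domain is less than $\e$. On the complementary event of probability $\ge 1-\e$, no escaping $3$-arm configuration occurs; hence whenever three disjoint branches cross $A(p,r,\e)$, either all three meet inside $B(p,r)$, or no two of them meet inside $B(p,\e)$ — and in the latter case, since the ambient surface is an annulus (so only finitely much topology), the three branches must all be connected to each other outside $B(p,\e)$, via the global structure of the tree. This is exactly the dichotomy in the statement.

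The main obstacle I anticipate is the combinatorial-topological bookkeeping in extracting the fourth (dual) arm cleanly and uniformly: one has to argue that the ``escaping'' configuration genuinely produces four \emph{disjoint} crossing arms of a single circular annulus of controlled modulus, with the center and radii chosen measurably and uniformly over $p$ and over $\d$, and one has to handle the case where $b_3$ winds around and comes close to the cluster in several places. I would deal with this by a compactness/covering argument as in Lemma \ref{lem:4-arm}: cover $A$ by finitely many balls of radius $\e$, each contained in a simply-connected $B_i$; inside each $B_i$ the trace of the UST is a UST with random boundary conditions dominated (Lemma \ref{lem:stocdom}) by free ones, so Schramm's simply-connected estimates apply verbatim, and the escaping $3$-arm / genuine $4$-arm implication is purely local, hence insensitive to the annular topology. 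The only genuinely global input is the last step — turning ``no two of $b_1,b_2,b_3$ connect inside $B(p,\e)$'' into ``all three connect outside $B(p,\e)$'' — which is immediate from the fact that a spanning tree is connected, so each $b_i$ reaches the wired boundary (or the common branch point), forcing the three to be mutually connected in the complement of the ball.
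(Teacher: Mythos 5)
Your overall strategy---rule out the ``escaping'' configuration by showing it forces a higher-arm event at an intermediate scale and then invoking the $4$-arm estimate---is the right one, and it is in the spirit of what the paper does (the paper simply transfers Schramm's proof of Theorem 10.7 of \cite{S0}, substituting the annular $4$-arm estimate of Lemma \ref{lem:4-arm} wherever Schramm invokes his Corollary 10.6 or Lemma 10.8). But the step where you dispose of the fourth (dual) arm has a genuine gap, and it is exactly the point the paper is careful about. The escaping configuration gives three primal crossings of $A(p,r,\e)$ coming from only \emph{two} components of $T\cap B(p,\e)$ (the $b_1b_2$-cluster contributes two crossings, $b_3$ one), plus dual arms separating the two components. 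This does \emph{not} force four disjoint primal crossings of a comparable annulus, so your first claim---that a mixed $3{+}1$ arm event ``is, up to constants in the radii, the same as'' a primal $4$-arm event---is not justified. Your fallback, dominating the dual configuration via Lemma \ref{lem:stocdom} ``exactly as in the proof of Lemma \ref{lem:4-arm}'', also fails: the mixed event is increasing in the primal edge set in its primal coordinates and decreasing in its dual coordinates, so it is monotone in neither direction and stochastic comparison gives no control. The paper states this obstruction explicitly (``the event we are trying to control is preserved neither by adding nor by erasing edges''), and it is the reason the authors transfer Schramm's \emph{proof} rather than his statement.

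What actually closes the gap in the simply-connected case is that the dual of a UST is itself a UST (with swapped boundary conditions), so the dual arms can be generated by Wilson's algorithm and estimated by the same loop-erased-walk arguments as the primal ones; Schramm's argument for Theorem 10.7 is organized so that, step by step, it only ever needs inputs of the form of Corollary 10.6 / Lemma 10.8, and those are the places where the annular primal $4$-arm estimate can be substituted. In the annulus the dual of the spanning tree is not a tree, so any argument that treats the dual arm as ``just another UST branch'' must be rerouted through this proof-level substitution rather than through a domination of the final event. Your closing global step (``no two branches meet inside $B(p,\e)$ implies all three meet outside'') is fine, but the core reduction needs the Wilson's-algorithm bookkeeping, not stochastic domination.
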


\begin{proof}
We refer to the proof of Theorem 10.7 in \cite{S0}. We cannot directly use stochastic domination to transfer the result, as the event we are trying to control is preserved neither by adding nor by erasing edges. But we can readily transfer its proof, using our 4-arm estimate in the doubly-connected setting (Lemma \ref{lem:4-arm}) whenever Schramm calls for his Corollary 10.6 or Lemma 10.8.
\end{proof}

Close to a piecewise $\mathcal{C}^1$ boundary (assuming full wired boundary conditions), we have similar arm estimates.

\begin{lem}[Boundary 2-arm estimate]\label{lem:boundary-2-arm}
For all $\e > 0$, there is a radius $r\in (0,\e)$ such that for all small enough mesh size $\d$, the following holds: the probability that there exists a boundary point $p$ such that there are $2$ disjoint branches of the uniform spanning tree crossing the circular annulus $A(p,r,\e)$ is less than $\e$.
\end{lem}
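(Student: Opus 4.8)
The plan is to follow the template just used for the interior estimates (Lemmas \ref{lem:4-arm} and \ref{lem:pointsclosetoabranchareconnectedtoit}): reduce the doubly-connected case to Schramm's simply-connected boundary estimates in \cite{S0} via the stochastic comparison Lemma \ref{lem:stocdom}. Concretely, since the boundary of $A$ is piecewise $\C^1$, I would first cover a neighborhood of $\partial A$ by finitely many simply-connected discrete domains $B_i^\d$, each carrying full wired boundary conditions on the part of its boundary coming from $\partial A^\d$, and chosen so that every ball of radius $\e$ centered at a boundary point of $A^\d$ is contained in one of the $B_i^\d$, together with an arc of $\partial A^\d$ of length bounded below uniformly in $\d$ (this uses the piecewise $\C^1$ regularity, so that the boundary near $p$ is, up to bounded distortion, a single smooth arc). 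The trace on $B_i^\d$ of the wired UST on $A^\d$ is a UST on $B_i^\d$ with random boundary conditions that are always more wired than the free conditions, and the boundary arc coming from $\partial A^\d$ already carries wired conditions in both; by Lemma \ref{lem:stocdom} this trace is stochastically dominated (for the number of edges) by the UST on $B_i^\d$ with free conditions on the ``interior'' part of its boundary and wired conditions on the $\partial A^\d$-part.

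Next, since a $2$-arm event at a boundary point $p$ — two disjoint branches of the tree crossing $A(p,r,\e)$ — is an increasing event in the edge set of the tree, the domination implies that its probability under the wired UST on $A^\d$ is bounded by its probability under the UST on the appropriate $B_i^\d$. Schramm's simply-connected boundary arm estimate (the relevant statement in \cite{S0}, with the full wired boundary hypothesis) then gives, for each fixed $i$, a radius $r_i \in (0,\e)$ such that for all small enough $\d$ the probability of a boundary $2$-arm crossing $A(p,r_i,\e)$ for some $p$ near the $\partial A^\d$-part of $B_i^\d$ is at most $\e/N$, where $N$ is the (finite) number of sets in the cover. Taking $r = \min_i r_i$ and summing over $i$ via a union bound yields the claim, that the probability of the existence of \emph{any} boundary point $p$ with a $2$-arm crossing $A(p,r,\e)$ is at most $\e$.

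The main obstacle I expect is not probabilistic but geometric: making the covering argument genuinely uniform near the non-smooth points of $\partial A$ (the finitely many ``corners'' of the piecewise $\C^1$ boundary). Near such a point the boundary arc inside a small ball $B(p,\e)$ may consist of two $\C^1$ arcs meeting at an angle, and one must check that Schramm's estimate (proved for $\C^1$ boundary) still applies — either by absorbing the corner into the wired part (a corner only helps, as it makes the boundary ``more accessible'' to the random walk driving Wilson's algorithm, so the Beurling-type bounds only improve) or by invoking the version of Schramm's Theorem 11.1 of \cite{S0} and the remark following it, which covers piecewise $\C^1$ boundaries directly. Once this uniformity is granted, the rest is a routine union bound, exactly parallel to the proof of Lemma \ref{lem:4-arm}.
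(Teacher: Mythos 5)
Your proposal is correct and follows essentially the same route as the paper, which simply cites Schramm's simply-connected boundary estimates (\cite{S0}, Theorem 11.1 (i) and (ii)) and transfers them to the annulus by stochastic domination; your write-up merely fleshes out the covering, the monotonicity of the $2$-arm event, and the union bound that the paper leaves implicit. The one point you raise about corners of the piecewise $\C^1$ boundary is a legitimate detail the paper does not address explicitly, and your resolution (the corner only improves the Beurling-type bounds) is the right one.
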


\begin{proof}
In the simply-connected case, this is a consequence of \cite{S0}, Theorem 11.1, (i) and (ii). We can use stochastic domination to transfer this result to an annular domain.
\end{proof}

\begin{lem}\label{lem:dualtrunkdisjointfromboundary}
For all $\e > 0$, there is a radius $r\in (0,\e)$ such that for all small enough mesh size $\d$, with probability at least $1-\e$, all points $r$-close to the boundary are connected to it in the tree within a ball of radius $\e$.
\end{lem}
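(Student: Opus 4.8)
The strategy is to read off Lemma~\ref{lem:dualtrunkdisjointfromboundary} from the boundary $2$-arm estimate, Lemma~\ref{lem:boundary-2-arm}: I plan to show that whenever some point close to $\partial A$ fails to connect to $\partial A$ inside a small ball, a boundary $2$-arm configuration is forced. Given $\e>0$, I would first apply Lemma~\ref{lem:boundary-2-arm} with parameter $\e/2$ to produce a radius $r\in(0,\e/2)$ such that, for all small enough $\d$, with probability at least $1-\e$ no boundary point $p$ carries two disjoint branches of the UST crossing the circular annulus $A(p,r,\e/2)$. I claim this $r$ does the job.

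Working on that event, suppose $x$ is a vertex within distance $r$ of $\partial A$ whose tree-branch $\gamma$ to the wired boundary is not contained in $B(x,\e)$. Pick a closest boundary point $p$ to $x$; then $|x-p|<r$, so $B(p,\e/2)\subset B(x,\e)$, and hence $\gamma$ does not meet $\partial A$ inside $B(p,\e/2)$. Since $\gamma$ is a connected path that starts at $x\in B(p,r)$ and eventually reaches $\partial A$, it must leave $B(p,\e/2)$ first; its initial segment up to that first exit is a connected piece of the tree containing $x$, reaching outside $B(p,\e/2)$, and disjoint from $\partial A$ (a branch to the wired boundary contains no boundary vertex before its endpoint). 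This segment therefore crosses $A(p,r,\e/2)$. Since the modulus of $A$ is fixed, for $\e$ small the component of $\partial A$ through $p$ also crosses $A(p,r,\e/2)$, and it is disjoint from the segment of $\gamma$ just exhibited; so we would have two disjoint branches of the UST crossing $A(p,r,\e/2)$, contradicting the event. Hence the bad event has probability at most $\e$. The passage to the doubly-connected setting and the uniformity over Carath\'eodory-compact families are already built into Lemma~\ref{lem:boundary-2-arm}, so nothing extra is needed there.

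The single delicate point --- and where I expect whatever subtlety there is to live --- is interpretational: one has to be sure that in Schramm's boundary $2$-arm estimate the wired boundary is itself an admissible ``branch'', i.e. that the estimate in fact forbids even one branch of the tree disjoint from $\partial A$ from crossing $A(p,r,\e)$ at a boundary point $p$. This is the standard reading --- with Dirichlet/wired boundary conditions one arm is always ``free'' --- and under it Lemma~\ref{lem:dualtrunkdisjointfromboundary} is barely more than a restatement of Lemma~\ref{lem:boundary-2-arm}; no probabilistic input beyond it is required. I would also briefly record the elementary geometric fact used above, that at scales small compared with the modulus of $A$ the arc of $\partial A$ through $p$ genuinely crosses $A(p,r,\e/2)$ rather than staying inside $B(p,\e/2)$; this uses only that the two components of $\partial A$ have definite diameter.
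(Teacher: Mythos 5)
Your reduction to Lemma \ref{lem:boundary-2-arm} does not go through as stated, and the ``interpretational'' point you flag at the end is in fact the substance of the lemma rather than a reading convention. The negation of Lemma \ref{lem:dualtrunkdisjointfromboundary} forces exactly \emph{one} branch of the tree to cross $A(p,r,\e/2)$ while avoiding $\partial A$; your second ``arm'' is the boundary arc through $p$, which is not a branch of the tree. In the wired UST the boundary vertices are contracted to a single vertex of $\widetilde{\G}$, so edges joining two boundary vertices are not edges of the spanning tree, and the boundary arc does not embed as a tree-path crossing the annulus. Concretely, the bad event is realized by a configuration in which the branch from $x$ runs parallel to $\partial A$ at height $\approx r$ for a distance $\e$ before plunging to the boundary, while every other nearby vertex connects straight down to $\partial A$ inside the inner disk: there is then a single disjoint tree-branch crossing $A(p,r,\e/2)$, the literal event of Lemma \ref{lem:boundary-2-arm} does not occur, and yet Lemma \ref{lem:dualtrunkdisjointfromboundary} fails. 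What you actually need is a ``one arm avoiding the wired boundary'' estimate, uniform over boundary points; this is a genuinely different (if closely related) statement, and it is not obtained by a union bound over single points either, since the Beurling bound $O((r/\e)^\beta)$ for a fixed $x$ does not survive summation over the $O(\delta^{-2})$ vertices near $\partial A$.

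This is why the paper does not deduce the lemma from the \emph{statement} of the boundary $2$-arm estimate: its proof is declared to be a boundary companion of Lemma \ref{lem:pointsclosetoabranchareconnectedtoit}, i.e.\ one re-runs Schramm's argument (Theorem 10.7 and Theorem 11.1 of \cite{S0}, which use Wilson's algorithm and a conditioning/second-moment structure), substituting the doubly-connected $4$-arm and boundary $2$-arm estimates wherever Schramm invokes his corresponding simply-connected lemmas. The event being controlled is preserved neither by adding nor by deleting edges, so it also cannot be transferred by stochastic domination alone; the \emph{proof}, not the conclusion, is what gets transported. To repair your write-up you would either have to strengthen Lemma \ref{lem:boundary-2-arm} to count the wired boundary as an admissible arm (and verify that the transferred Schramm argument actually yields that stronger statement), or follow the paper's route and adapt the proof of Lemma \ref{lem:pointsclosetoabranchareconnectedtoit} to the boundary.
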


\begin{proof}
This is a companion statement to Lemma \ref{lem:pointsclosetoabranchareconnectedtoit} on the boundary, and the proof is similar.
\end{proof}

The 4-arm and boundary 2-arm estimates (Lemmas \ref{lem:4-arm} and \ref{lem:boundary-2-arm}) are enough to give tightness on the exploration process of the UST.

\begin{prop}\label{prop:precompexpl}
The exploration processes $e^\d_{A^\d}$ and $e^\d_{A^\d\sm c^\d}$ in natural approximations of $A$ and $A\sm c$ are tight for the topology $\Top$.
\end{prop}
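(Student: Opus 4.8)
The plan is to prove tightness via Aldous's criterion (or an equivalent modulus-of-continuity bound after a good choice of parametrization), reducing everything to the arm estimates already established. The exploration process is a curve drawn on a fine grid, so the natural thing is to work with the curve up to reparametrization, i.e. to control the geometry of the trace rather than of a fixed parametrization. Concretely, I would show that for every $\e>0$ there is $\eta>0$ such that, with probability at least $1-\e$ (uniformly in small $\d$), the curve $e^\d$ admits a parametrization $[0,1]\to\overline{A}$ for which $|e^\d(s)-e^\d(t)|\le\e$ whenever $|s-t|\le\eta$. By the Arzel\`a--Ascoli-type characterization of precompactness for the topology $\Top$ (uniform convergence up to reparametrization) — this is the standard criterion, see e.g. the discussion in \cite{S0} — such an equicontinuity-in-probability bound gives tightness.

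The mechanism producing this modulus of continuity is the statement that the exploration curve cannot make a ``tight crossing'': if $e^\d$ enters a small ball $B(p,r)$, leaves the concentric ball $B(p,\e)$, and comes back into $B(p,r)$, then it has created a configuration of disjoint tree/dual-tree branches crossing the annulus $A(p,r,\e)$. In the bulk, a curve that wanders out of $B(p,\e)$ and then returns to $B(p,r)$ forces (essentially by the topology of the interface between $T$ and $T^\dagger$) either a $4$-arm configuration of tree branches or a ``bad'' $3$-arm configuration of the escaping type around $p$; Lemmas~\ref{lem:4-arm} and \ref{lem:pointsclosetoabranchareconnectedtoit} rule both out with probability $1-\e$. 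Near the (piecewise-$\C^1$) boundary, a return of the curve similarly forces a boundary $2$-arm event, excluded by Lemma~\ref{lem:boundary-2-arm} (with Lemma~\ref{lem:dualtrunkdisjointfromboundary} handling the companion ``escaping'' configuration near $\partial A$). One covers $\overline{A}$ by finitely many such balls of radius $\e$ (and correspondingly $r$), applies the estimates simultaneously, and deduces that on this good event every piece of the curve of diameter $\ge\e$ requires the curve to travel ``macroscopically'' between disjoint balls; parametrizing the curve proportionally to an appropriate additive quantity (e.g. a covering by the finitely many balls, or arc length of a coarse-grained skeleton) then yields the uniform modulus of continuity. The argument for $e^\d_{A^\d\sm c^\d}$ is identical since it is again the interface of a UST, now with mixed boundary conditions in the simply-connected domain $A\sm c$, to which the same arm estimates apply (indeed Schramm's original simply-connected estimates suffice there).

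The step I expect to be the main obstacle is the combinatorial-topological bookkeeping that converts ``$e^\d$ re-enters a small ball after leaving a larger one'' into ``$n$ disjoint arms cross the circular annulus'', keeping careful track of which of the three flavors of $3$-arm event can occur and making sure that in every case one lands in the scope of an estimate we have. One has to be careful that the arms in question are genuinely disjoint branches of $T$ (or of $T^\dagger$), that the point $p$ can be taken $\e$-inside the domain in the bulk case or on the boundary in the boundary case, and that the finitely-many-balls covering is uniform in $\d$ (so that $r$ and the number of balls depend only on $\e$ and on the fixed domain $A$ and cut $c$, via their Carath\'eodory data). Once this dictionary is in place, the probabilistic input is exactly the already-proven arm estimates, and the passage to tightness is the standard precompactness criterion for curves modulo reparametrization, so no further delicate work is needed.
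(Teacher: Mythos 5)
Your proposal is correct and follows essentially the same route as the paper: the arm estimates (Lemmas \ref{lem:4-arm} and \ref{lem:boundary-2-arm}) bound the number of crossings of each small annulus $A(p,r(\e),\e)$ by the exploration curve, a finite covering of $\overline{A}$ by such balls then bounds the number of $\e$-steps of the curve, and this equicontinuity-type bound gives precompactness for $\Top$ via an Arzel\`a--Ascoli argument. The only cosmetic difference is that the paper exhibits explicit compact sets of paths (curves making at most $N_i$ steps of size $2^{-i}$) rather than phrasing things through a modulus of continuity, and it gets by with the $4$-arm and boundary $2$-arm estimates alone (each exploration crossing yields a tree crossing on its right, so more than $3\times 2=6$ exploration crossings would force four disjoint tree arms), reserving the $3$-arm lemmas for the later regularity statement (Proposition \ref{prop:nohidingloops}).
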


\begin{proof}
To any integer sequence $(N_i)_{i\in\N}$, we can associate a subset of paths in $A$ that is compact for the topology $\Top$ of uniform convergence up to reparametrization: the set of paths $\g$ such that for any integer $i$, $\g$ makes fewer than $N_i$ steps\footnote{Any reasonable definition would work here. For example, one can count steps of size $\epsilon$ inductively: given $p_n$ the endpoint of the $n$-th $\epsilon$-step, one can let the $(n+1)$-th step end when the path $\gamma$ exits the ball of radius $\epsilon$ around $p_n$.} of size $2^{-i}$. Indeed, these sets of paths satisfy the Bolzano-Weierstrass property, as one can iteratively extract subsequences so that the number of steps of size $2^{-i}$, as well as their endpoints, converge.

Let us now show that, independently of the mesh size $\d$, with an arbitrarily high probability, the exploration process is in one of these compact sets of paths. From Lemmas \ref{lem:4-arm} and \ref{lem:boundary-2-arm} we know that for any $\e$, we can find some small $r(\e)$ such that with probability bigger than $1-\e$, there is no circular annulus $A(p,r(\e),\e)$ crossed by more than three disjoint branches of the UST, and so the exploration process crosses no annulus of radius $A(p, r(\e), \e)$ more than $3\times2=6$ times\footnote{Note that a crossing of the annulus by the exploration process gives a crossing by the spanning tree on its right-hand side.}.
Hence, with probability bigger than $1-\e$, the curve makes a number of steps of size $\e$ that is bounded by $6$ times the number $N(r(\e))$ of balls of radius $r(\e)$ needed to cover the annulus $A$.
Tightness follows.
\end{proof}

The arm estimates give information on the regularity of any limiting curve of the exploration process. Let us call $e$ a subsequential limit of the UST in some domain $D$, and let $\tau$ be a stopping time of $e$. Notice that for topological reasons\footnote{The same statement would hold if we replaced $e([0,\tau])$ by any closed subset of $\overline{D}$.}, the connected components of $D \sm e([0,\tau])$ are open.
\begin{figure}[htb]
\begin{center}
\includegraphics[width = 5cm]{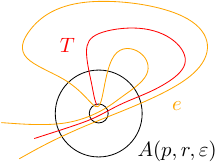}
\caption{A bad event for the exploration process $e$, and the 3-arm events it forces.}
\end{center}
\end{figure}

\begin{prop}\label{prop:nohidingloops}
Almost surely, there is a connected component $\O$ of $D \sm e([0,\tau])$ carrying $e(\tau)$ on its boundary such that $(e_t)_{t\geq\tau}$ stays in $\overline{\O}$.
Moreover, the sets of time when $e$ is inside $\O$ contains $\tau$ in its closure.
\end{prop}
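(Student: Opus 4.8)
The plan is to derive the statement from the arm estimates of this section, by showing that the limiting exploration curve $e$ cannot ``escape'' from a component of $D\setminus e([0,\tau])$ and come back. First I would fix a subsequential limit $e$ and a stopping time $\tau$, and for each $\varepsilon>0$ use Lemma~\ref{lem:4-arm} and Lemma~\ref{lem:boundary-2-arm} to select a radius $r=r(\varepsilon)$ such that, with probability at least $1-\varepsilon$ (uniformly in the mesh, hence also in the limit), no circular annulus $A(p,r,\varepsilon)$ with $p$ $\varepsilon$-inside (resp.\ $p$ on the boundary) is crossed by $4$ (resp.\ $2$) disjoint branches of the tree, and also invoke Lemma~\ref{lem:pointsclosetoabranchareconnectedtoit} and Lemma~\ref{lem:dualtrunkdisjointfromboundary} on the same event. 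The point of these estimates is exactly the ``no hiding loops'' phenomenon: a $3$-arm event of the escaping type (two arms of the tree joined at the central point, plus one disjoint arm) does not occur in the limit.

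Next I would argue by contradiction along the lines suggested by the figure ``A bad event for the exploration process $e$''. Suppose that on a positive-probability event the conclusion fails: then there are times $\tau<s<t$ and a point $x=e(s)$ such that $e(s)$ lies in the interior of a component $\O$ of $D\setminus e([0,\tau])$ different from the one (call it $\O_0$) whose boundary $e$ follows immediately after $\tau$, \emph{or} $e$ exits $\overline{\O_0}$ after first entering it. In either case one produces, at a suitably chosen scale, a small annulus $A(x,r,\varepsilon)$ around a point of $e([0,\tau])$ that is crossed by the required number of disjoint arms of the spanning tree: the curve $e([0,\tau])$ contributes (as the interface, with tree edges on its right) a branch separating $x$ from the far part of $\O$, the portion $e([\tau,s])$ that reached $x$ from $\O_0$ contributes another arm, and the continuation $e([s,t])$ leaving $\O$ a third, the three being forced to be disjoint near $x$ precisely because $x$ is interior to $\O$ and hence at positive distance from $e([0,\tau])$. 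This is the forbidden $3$-arm (or, near $\partial D$, boundary $2$-arm) configuration, so the bad event has probability $0$. Letting $\varepsilon\to0$ and taking a countable intersection gives the almost sure statement.

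For the ``moreover'' part, I would observe that $e(\tau)$ is itself a limit point of $e(\tau+t)$ as $t\to0$, so $e(\tau)\in\overline{\O}$; and using Lemma~\ref{lem:pointsclosetoabranchareconnectedtoit} (or directly that $e$ is a curve with no isolated points) the curve must genuinely enter the interior of $\O$ at arbitrarily small times after $\tau$ — if it stayed on $\partial D\cup e([0,\tau])$ on a whole interval $[\tau,\tau+\eta]$ one would again manufacture an escaping $3$-arm event or contradict space-fillingness of the UST exploration. Hence the set of times at which $e$ is inside $\O$ accumulates at $\tau$.

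\textbf{Main obstacle.} The delicate point is the bookkeeping in the contradiction step: turning the purely topological ``$e$ leaves and re-enters a component of the complement of a closed set'' into an honest $3$-arm (or boundary $2$-arm) event at a definite scale, with three genuinely \emph{disjoint} tree branches, uniformly enough that the vanishing-probability estimates of Lemmas~\ref{lem:4-arm}--\ref{lem:dualtrunkdisjointfromboundary} apply. One has to choose the radii so that the ``escape'' happens across a macroscopic annulus while the point $x$ is comfortably interior to $\O$ (distance to $e([0,\tau])$ bounded below), handle separately the near-boundary case via the boundary $2$-arm estimate, and pass all of this through the weak limit — which is why the a~priori estimates were stated uniformly over Carathéodory-compact families of domains and in the mesh size.
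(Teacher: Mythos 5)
Your proposal is correct and follows essentially the same route as the paper: the paper's (one-sentence) proof likewise argues that any violation would exhibit, on the discrete tree, the forbidden ``escaping'' $3$-arm configuration excluded by Lemmas \ref{lem:pointsclosetoabranchareconnectedtoit} and \ref{lem:dualtrunkdisjointfromboundary}. Your write-up simply makes explicit the bookkeeping (choice of scales, boundary case, passage to the limit) that the paper leaves to the reader and to its figure.
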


\begin{proof}
Any contradiction of the preceding statement would allow us to see, on the discrete tree, the type of 3-arm events which is prevented by Lemmas \ref{lem:pointsclosetoabranchareconnectedtoit} and \ref{lem:dualtrunkdisjointfromboundary}.
\end{proof}

Note that we can apply Proposition \ref{prop:nohidingloops} at the countable family of stopping times $\tau_N^n$ corresponding to the endpoint of the $N$-th $2^{-n}$-step. This yields an almost sure control on the entire behavior of the curve $e$ at once.

\section{The exploration process of the UST in an annulus}\label{expl}

\subsection{Convergence of the reference process $e^\d_{A^\d\sm c^\d}$ to $\SLE_8(2)$}\label{cv-SLE8}

We are now  going to discuss the convergence of the exploration processes of a wired UST on the natural approximations of a simply-connected piecewise $\mathcal{C}^1$ domain $D$ (e.g. $A\sm c$) towards $\SLE_8(2)$, which is a minor degeneration of a well-known result by Lawler, Schramm and Werner (Theorem \ref{prop:USTcvSLE}). The idea of the proof is straightforward. We have tightness of the laws under consideration, hence we only need to characterize uniquely any subsequential limit in order to show convergence. To do so, we let the exploration process evolve for a very short amount of time in order to produce free boundary conditions in the domain yet to be explored. The continuous exploration process in this remaining domain is seen to be an $\SLE_8$ (using the convergence result of \cite{LSW_LERW}). This is enough to characterize as $\SLE_8(2)$ any subsequential limit of our exploration processes.

\begin{thm}\label{thm:cv-explbubble}
The exploration process $e^\d_{D^\d}$ converges in law (for the topology $\Top$) towards counterclockwise $\SLE_8(2)$.
\end{thm}

\begin{proof}The exploration processes of the UST in natural approximations of a domain $(D,x_0)$ starting from a boundary point $a$ form a tight family for the topology $\Top$ (Proposition \ref{prop:precompexpl}), so we can consider some subsequential limit $e_{D}$ (in the almost sure sense, thanks to Skorokhod's representation theorem).

Let $\tau_{\e}^\d$ be the exit time of the ball of radius $\e$ centered at $a$, that we may assume (up to further extracting) to converge to a time $\widetilde{\tau_\e}$ (that may \emph{a priori} be larger than the first exit time $\tau_\e$).

Call $D_\e$ the connected component of $D\sm e_D([0,\widetilde{\tau_\e}])$ containing $a$ on its boundary, which is a simply-connected domain with distinct boundary points $a$ and $e_D(\widetilde{\tau_\e})$ (follows from Proposition \ref{prop:nohidingloops}). Notice that, in particular, the domain $D_\e$ carry non-trivial free boundary conditions. Let us now consider the evolution of the exploration process after time $\widetilde{\tau_\e}$. The subsequential limit $\g_\e$ of $(e^\d_{D^\d}(t))_{t\geq\tau_{\e}^\d}$ has the law of an $\SLE_8^{D_\e}(e_D(\widetilde{\tau_\e})\rightarrow a)$ (see Lemma \ref{lem:SLEepsilon}).

Let us fix an observation point $o$ on the boundary $\partial D$, and let $\tau_{o}$ be the first hitting time of $o$ by $\g_\e$. We push our curve to the upper half-plane $\H$ using the conformal map $\phi : (D,a,o) \rightarrow (\H,0,\infty)$, that moreover satisfies $|\phi(x_0)|=1$. By SLE change of coordinates (see e.g. \cite{SchWi_SLECoordChange}), the curve $\phi(e_D)$ from the time $\widetilde{\tau_\e}$ until time $\tau_{o}$ is driven by a certain functional $c_\e + F_t(X^{a_\e})$ of a Bessel process $X^{a_\e}$ of dimension $2$ started from $X^{a_\e}_0=a_\e$, where $F_t(X) = \sqrt{8} X_t - \int_0^t \frac{1}{\sqrt{2} X_s} ds$. Moreover, the random variables $c_\e$ and $a_\e$ can be bounded by a quantity going to $0$ as $\e$ goes to $0$. Indeed, $a_\e$ and $c_\e$ can be controled by the size of the ball $B_\e(a)$ in the domain $D$ seen from the point $o$, where we measure size using the image of the Lebesgue measure (seen as a measure on $\partial \H \sm \{\infty\}$) by a fixed uniformizing map. For example, the quantity $a_\e$ is by definition exactly the size of the arc $\arcperso{a^\d,\g^\d_\e(0)}$ in the domain $D_\e$. Hence, $a_\e$ is less than the size of the boundary of the ball $B_\e(a)$ in the domain $D \sm B_\e(a)$, as conformal images of the Lebesgue measure enjoy a monotonicity property (conformal images of the Lebesgue measure can also be seen as the exit measure of Brownian excursions starting at $o$). This gives a bound on $a_\e$ that does not depend on the exploration process.

Hence, the process $e_D$ until time $\tau_o$ is driven by $F_t(X^{0^+})$, and so has the law of an $\SLE_8(2)$.
\end{proof}

\begin{lem}\label{lem:SLEepsilon}
For any subsequence of mesh sizes for which the process $e^\d_{D^\d}$ converges, conditionally on $(D_\e,e_D(\widetilde{\tau_\e}))$, the limit $\g_\e$ of $(e^\d_{D^\d}(t))_{t\geq\tau_{\e}^\d}$ has the law of an $\SLE_8^{D_\e}(e_D(\widetilde{\tau_\e})\rightarrow a)$.
\end{lem}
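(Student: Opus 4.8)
The plan is to argue that, once a small initial piece $e_D([0,\widetilde{\tau_\e}])$ of a subsequential limit has been realized, the remaining domain $D_\e$ carries genuine free (Neumann) boundary conditions along a nondegenerate arc, and so the conditional law of the tail of the discrete exploration process is precisely the UST exploration process in a simply-connected domain with mixed wired/free boundary conditions, whose scaling limit is $\SLE_8$ by Theorem \ref{prop:USTcvSLE}. The first step is to set up the conditioning carefully at the discrete level: let $\g^\d = e^\d_{D^\d}([0,\tau_\e^\d])$ be the initial segment of the discrete exploration, with image hull $K^\d$ carrying, by the spatial Markov property of the UST (Proposition \ref{prop:USTMarkov}), free boundary conditions on the counterclockwise arc $\B_{K^\d}=\arcperso{a^\d,\g^\d(\tau_\e^\d)}$ and wired conditions on $\partial K^\d\sm\B_{K^\d}$. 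Thus, conditionally on $\g^\d$, the tail $(e^\d_{D^\d}(t))_{t\ge\tau_\e^\d}$ is exactly the exploration process of a wired UST in the simply-connected domain $D^\d\sm K^\d$, run from the tip $\g^\d(\tau_\e^\d)$ to $a^\d$, with wired conditions on one boundary arc and free on the complementary arc $\B_{K^\d}$.

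The second step is to pass to the limit along the chosen subsequence. By Skorokhod coupling we may assume $\g^\d\to e_D([0,\widetilde{\tau_\e}])$, hence $K^\d$ converges (as a hull) and the decorated domains $(D^\d\sm K^\d,x_0^\d,\g^\d(\tau_\e^\d),a^\d)$ converge in the Carath\'eodory sense to $(D_\e,x_0,e_D(\widetilde{\tau_\e}),a)$; here Proposition \ref{prop:nohidingloops} is what guarantees that $D_\e$ is a bona fide simply-connected domain with the two marked boundary points $e_D(\widetilde{\tau_\e})$ and $a$ distinct, and in particular that the free boundary arc $\B_{K^\d}$ does not degenerate to a point. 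Applying Theorem \ref{prop:USTcvSLE} (which is uniform on Carath\'eodory-compact families of decorated domains) to the conditional law, the tail converges in law to $\SLE_8^{D_\e}(e_D(\widetilde{\tau_\e})\to a)$; matching this against the subsequential limit $\g_\e$ of the same tail gives the claim. One should record that the limiting curve's law depends on $(D_\e,e_D(\widetilde{\tau_\e}))$ only, i.e.\ the conditional statement is exactly as asserted.

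The third, more delicate point --- and the one I expect to be the main obstacle --- is justifying the interchange of the conditioning with the scaling limit, i.e.\ that a limit point of the \emph{conditional} laws is a.s.\ the conditional law of the limit. The clean way to handle this is to note that the initial hull $K^\d$ is measurable with respect to finitely much information (the first $2^{-n}$-steps, in the notation of Proposition \ref{prop:precompexpl}), so one can work on the event that $\g^\d$ lies in a fixed compact set of initial segments staying a definite distance inside $D$; on such an event the domains $D^\d\sm K^\d$ range over a Carath\'eodory-compact family, the convergence in Theorem \ref{prop:USTcvSLE} is uniform, and a standard argument (testing against bounded continuous functionals of the tail and bounded continuous functionals of the initial segment, then taking $n\to\infty$ and using Proposition \ref{prop:nohidingloops} to control the exceptional events) upgrades marginal convergence to the joint/conditional statement. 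A subsidiary technical check is that the free arc $\B_{K^\d}$, although it shrinks with $\e$, stays macroscopic relative to the mesh $\d$ for $\d$ small, so that the ``free'' boundary condition is actually felt; this is exactly the hypothesis $\d\ll\e$ used elsewhere, together with the piecewise-$\C^1$ regularity invoked throughout.
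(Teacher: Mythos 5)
Your overall strategy --- spatial Markov property of the UST to identify the conditional law of the tail as a mixed-boundary UST exploration in $D^\d\sm K^\d$, then Theorem \ref{prop:USTcvSLE} to pass to the limit --- is the same as the paper's. But you have glossed over the step that the paper's proof is actually about, and placed the difficulty elsewhere. Theorem \ref{prop:USTcvSLE} gives convergence \emph{of Loewner driving functions}; convergence of the curve itself in the topology $\Top$ is only asserted there for domains with piecewise-$\C^1$ boundary. The domain $D_\e$ is the complement of the initial hull $e_D([0,\widetilde{\tau_\e}])$, whose boundary contains the (fractal, $\SLE_8$-like) trace of the exploration process, so it is emphatically not piecewise $\C^1$, and you cannot read off from Theorem \ref{prop:USTcvSLE} that ``the tail converges in law to $\SLE_8^{D_\e}$'' in the topology $\Top$. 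Your closing remark that the needed regularity is ``the piecewise-$\C^1$ regularity invoked throughout'' points at exactly the hypothesis that fails here.

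The paper closes this gap as follows: tightness already provides a subsequential limit $\g_\e$ of the tail in the topology $\Top$, so one only needs to identify its law; one shows that $\g_\e$ admits a parametrization by half-plane capacity (if capacity failed to increase on some interval, the curve would have to avoid the interior of the component of $D_\e\sm\g_\e([0,t_1])$ containing $a$, contradicting Proposition \ref{prop:nohidingloops}); since capacity is continuous under uniform convergence, the capacity-parametrized $\g^\d_\e$ converge uniformly to $\g_\e$, hence the driving function of $\g_\e$ is the limit $W_t=B_{8t}$ of the discrete driving functions, which identifies $\g_\e$ as $\SLE_8^{D_\e}(e_D(\widetilde{\tau_\e})\to a)$. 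Your ``third, more delicate point'' (interchanging conditioning with the scaling limit) is a legitimate bookkeeping concern, but it is not where the real work lies; as written, your proof would need the capacity-parametrization argument (or an equivalent) to be complete.
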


\begin{proof}
Let us denote by $\g^\d_\e$ the exploration process $e^\d_{D^\d}$ after its exit time $\tau_{\e}^\d$ of the ball of radius $\e$ centered at $a$.
Thanks to the Markovian property of the UST (Proposition \ref{prop:USTMarkov}), we know that  $\g^\d_\e$ has the law of the exploration process of an independent UST in the remaining domain $D_\e^\d$, with wired boundary conditions on the counterclockwise boundary arc $\arcperso{\g^\d_\e(0),a^\d}$ and free boundary conditions on the arc $\arcperso{a^\d,\g^\d_\e(0)}$.

Hence the Loewner driving function $W_\e^\d$ of the curve $\g^\d_\e$ converges in law (for the topology of locally uniform convergence) towards the driving function $W_t={B_{8t}}$ of an $\SLE_8$ (Theorem \ref{prop:USTcvSLE}). To conclude, we need to show that any subsequential limit $\g_\e$ for the topology $\Top$ is driven by $W$.

Consider the uniformizing map $\phi_\e : (D_\e,e_D(\widetilde{\tau_\e}),a) \rightarrow (\H,0,\infty)$ that moreover satisfies $|\phi_\e(x_0)|=1$. We will call capacity the half-plane capacity of objects in $D_\e$ pushed by the map $\phi_\e$.

The curve $\g_\e$ can be parametrized by capacity. Indeed, if capacity were not to give a parametrization of $\g_\e$, we could find a time interval $[t_1,t_2]$ on which $\g_\e$ does not grow in capacity. Which is equivalent to say that, during the time interval $[t_1,t_2]$, $\g_\e$ would have to stay away from the interior of the connected component of $D_\e\sm \g_\e([0,t_1])$ containing $a$ on its boundary. This would contradict Proposition \ref{prop:nohidingloops}.

Because capacity of the hull is a continuous function for the topology of uniform convergence, we see that, when parametrizing all curves by capacity, $\g_\e^\d$ converges uniformly towards $\g_\e$. 

It is now easy to conclude that the curve $\g_\e$ is driven by $W$ (see e.g. \cite{LSW_LERW}, Proposition 3.14).
\end{proof}

\begin{rem}
As a consequence of Theorem \ref{thm:cv-explbubble}, we see that the loop traced by $\SLE_8(2)$ is independent of the starting point $a$.
\end{rem}

\subsection{Convergence of the Radon-Nikodym derivatives}\label{cv-det}

Let us consider an annulus $A$, and let $K$ be a nice\footnote{We assume as before that $A\sm K$ is (simply-)connected, and that $a\in\partial(A\sm K)$.} compact subset of $A$ with a marked point on its boundary, so that its boundary is split in a Dirichlet subarc $\A_K$, and a Neumann subarc $\B_K$. Let $c$ be a cut disjoint from $K$, such that $A\sm c$ is simply-connected, and consider two cuts $d_1$ and $d_2$ that disconnect $c$ from the compact $K$ (see Figure \ref{fig-Setup1}). Moreover, we fix an interior point $x_0$ between $c$ and $d_1$. For any mesh size $\d$, we consider natural approximations of this set-up.

We are interested in the discrete harmonic operators, from the space of functions on $d^\d$ to itself, associated to the kernels $\T^\d = - \Ha^{c^\d\rightarrow d^\d}_{A^\d} \circ \Ha^{d^\d\rightarrow c^\d}_{A^\d}$ and $\T^\d_{K^\d} = - \Ha^{c^\d\rightarrow d^\d}_{A^\d\sm{K^\d}} \circ \Ha^{d^\d\rightarrow c^\d}_{A^\d\sm{K^\d}}$.

\begin{prop}\label{prop:cv-det}
The determinants of $\Id + \T^\d$ and $\Id+\T_{K^\d}^\d$ converge uniformly on compact sets of decorated domains towards the Fredholm determinants of their continuous counterparts.
\end{prop}

\begin{proof}
The determinant of $\Id+\T^\d$ can be expanded in the following way:
$$
\det(\Id+\T^\d) = \sum_{n\geq 1} \frac{1}{n!} \sum_{x_1,...,x_n} \det\left([\T^\d(x_i,x_j)\mu^{A^\d \sm d^\d}_{x^\d_0}(\{x_j\})]_{i,j}\right),
$$
where $\T^\d(x_i,x_j)$ is the kernel associated to the operator $\T^\d$ on $\mathcal{L}^2\left(c^\d,\mu^{A^\d \sm d^\d}_{x^\d_0}\right)$.

The function $\T^\d$ is locally uniformly bounded\footnote{i.e. uniformly bounded on Carath\'eodory-compact sets of decorated domains.} (a consequence of Lemma \ref{lem:poisson-kernel-bounded}), so the series expansion of the determinant is locally uniformly absolutely convergent\footnote{Consider a square matrix $\T$ of size $n$, which coefficients are bounded by $M$. Then Hadamard's inequality gives us the bound $|\T| \leq M^n n^{n/2}$.}.

We thus only need to prove that each term of the sum converges locally uniformly. Let us rewrite these terms:
\begin{eqnarray}
\sum_{x_1,\cdots,x_n} \det\left([\T^\d(x_i,x_j)\mu^{A^\d \sm d^\d}_{x^\d_0}(\{x_j\})]_{i,j}\right) &=& \sum_{x_1,\cdots,x_n} \sum_{\sigma \in S_n} \e(\sigma) \prod_{1\leq i \leq n}{\T^\d(x_i,x_{\sigma(i)})\mu^{A^\d \sm d^\d}_{x^\d_0}(\{x_{\sigma(i)}\})}. \nonumber
\end{eqnarray}
Hence, we just have to show, for any permutation $\sigma \in S_n$, the convergence of the following integral: 
$$
\int_{x_1,\cdots,x_n} \prod_{1\leq i \leq n}{\T^\d(x_i,x_{\sigma(i)})} \ \mu^{A^\d \sm d^\d}_{x^\d_0}(dx_1)\cdots \mu^{A^\d \sm d^\d}_{x^\d_0}(dx_n).
$$
This integral splits as a product of integrals over the cycles of the permutation $\sigma$, and we can thus reduce to proving, for any integer $l$, local uniform convergence of the following integral: 
$$
\int_{x_1,\cdots,x_l} \prod_{1\leq i \leq l}{\T^\d(x_i,x_{i+1})} \ \mu^{A^\d \sm d^\d}_{x^\d_0}(dx_1)\cdots \mu^{A^\d \sm d^\d}_{x^\d_0}(dx_l).
$$

We similarly reduce the convergence of the determinant $\det(\Id+\T^\d_{K^\d})$ to the local uniform convergence of the integrals
$$
\int_{x_1,\cdots,x_l} \prod_{1\leq i \leq l}{\T^\d_{K^\d}(x_i,x_{i+1})} \ \mu^{A^\d \sm (K^\d \cup d^\d)}_{x^\d_0}(dx_1)\cdots \mu^{A^\d \sm (K^\d \cup d^\d)}_{x^\d_0}(dx_l).
$$

These convergences follow from Lemma \ref{lem:cvintegrals}.
\end{proof}

\begin{lem}\label{lem:cvintegrals}
The integral quantities
$$
\int_{x_1,\cdots,x_l} \prod_{1\leq i \leq l}{\T^\d(x_i,x_{i+1})} \ \mu^{A^\d \sm d^\d}_{x^\d_0}(dx_1)\cdots \mu^{A^\d \sm d^\d}_{x^\d_0}(dx_l)
$$
and
$$
\int_{x_1,\cdots,x_l} \prod_{1\leq i \leq l}{\T^\d_{K^\d}(x_i,x_{i+1})} \ \mu^{A^\d \sm (K^\d \cup d^\d)}_{x^\d_0}(dx_1)\cdots \mu^{A^\d \sm (K^\d \cup d^\d)}_{x^\d_0}(dx_l)
$$
converge uniformly on Carath\'eodory-compact sets of decorated domains.
\end{lem}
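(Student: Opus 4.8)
The plan is to reduce the convergence of each cyclic integral to two ingredients that are already available: (i) the convergence of discrete harmonic measure (and hence of the harmonic extension operators $\Ha$) to its continuous counterpart, uniformly on Carath\'eodory-compact sets, from Lemma \ref{lem:harm-meas-conv} together with Remarks \ref{rem:harm-meas-conv-bound} and \ref{rem:harm-meas-conv-2-conn}; and (ii) equicontinuity / uniform boundedness of the relevant kernels away from their boundary singularities, from Lemmas \ref{lem:poisson-kernel-bounded} and \ref{lem:beurling}. The first step is to observe that $\T^\d(x,y)$, the kernel of $-\Ha^{c^\d\to d^\d}_{A^\d}\circ\Ha^{d^\d\to c^\d}_{A^\d}$ with respect to $\mu^{A^\d\sm d^\d}_{x_0^\d}$, is obtained by composing two harmonic-extension operators; each factor $\Ha^{d^\d\to c^\d}_{A^\d}$ extends a function on $d^\d$ to a bounded harmonic function on $A^\d\sm d^\d$ with Dirichlet data on $d^\d$ and the natural mixed conditions on $\partial A^\d$, then restricts to $c^\d$. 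Because $c^\d$ stays at a fixed positive distance from $d^\d$ (the cuts $d_1,d_2$ separate $c$ from $K$, and $c,d$ are disjoint), the Beurling estimate (Lemma \ref{lem:beurling}) gives that the kernel of $\Ha^{d^\d\to c^\d}_{A^\d}$ is uniformly bounded, uniformly in $\d$ and over Carath\'eodory-compact families, and likewise for $\Ha^{c^\d\to d^\d}_{A^\d}$; composing, $\T^\d$ is uniformly bounded and, by interior Harnack (Lemma \ref{lem:harnack}) applied to the harmonic functions being restricted, uniformly equicontinuous on the (compact) product $c^\d\times d^\d\cup\cdots$ — more precisely on the traces of the cuts, which converge in $\Top$ to $c,d$.

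The second step is to pass to the limit in the integral. Each integrand $\prod_{i=1}^{l}\T^\d(x_i,x_{i+1})$ is a uniformly bounded, uniformly equicontinuous function of $(x_1,\dots,x_l)$ ranging over (a discretization of) $c^l$, and the product measure $\bigotimes_i\mu^{A^\d\sm d^\d}_{x_0^\d}$ converges weakly to $\bigotimes_i\mu^{A\sm d}_{x_0}$ by Lemma \ref{lem:harm-meas-conv} and Remark \ref{rem:harm-meas-conv-2-conn} (here $\B$, the complement of the two arcs, is either a whole boundary component of the annulus or is empty, so the remark applies — this is exactly why the cuts are arranged as they are). Combining weak convergence of the measures with uniform convergence of the integrands (which follows from the convergence of harmonic measure, hence of the operators $\Ha^\d$ to $\Ha$ in operator norm on the relevant function spaces, hence of the kernels $\T^\d\to\T$ uniformly) yields convergence of the integral to $\int_{c^l}\prod_i\T(x_i,x_{i+1})\,\mu^{A\sm d}_{x_0}(dx_1)\cdots\mu^{A\sm d}_{x_0}(dx_l)$. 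The uniformity over Carath\'eodory-compact sets of decorated domains is inherited at each stage because all the input estimates (Beurling, Harnack, Lemma \ref{lem:harm-meas-conv}, Lemma \ref{lem:poisson-kernel-bounded}) are themselves stated uniformly over such families, and the discretization maps (natural approximation of $A$, of the cuts $c,d$, and of the hull $K$) depend continuously on the decorated domain in the Carath\'eodory topology.

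For the second family of integrals, involving $\T^\d_{K^\d}$ and the harmonic measure in $A^\d\sm(K^\d\cup d^\d)$, the argument is identical provided we know that removing the hull $K^\d$ is a Carath\'eodory-continuous operation and that the boundary-arc structure ($\A_K$ Dirichlet, $\B_K$ Neumann) is compatible with the hypotheses of Lemma \ref{lem:harm-meas-conv}: one applies the lemma in the domain $A\sm K$, whose boundary consists of (part of) $\partial A$, the two cuts, and $\partial K$. The cuts $d_1,d_2$ are chosen to separate $c$ from $K$, so on $A\sm(K\cup d)$ the cut $c$ is still at positive distance from everything else, and the Beurling/Harnack estimates apply verbatim. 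The main obstacle, and the place requiring the most care, is the uniform control near the ``corners'' where a cut meets the outer or inner boundary of the annulus (or where $\B_K$ meets $\A_K$): one must check that the equicontinuity of $\T^\d$ does not degenerate there, which is why the cuts are required to meet the boundary orthogonally and the boundary is assumed piecewise $\C^1$ — this puts us in the regime where Lemma \ref{lem:harm-meas-conv} and its remarks, and the Beurling estimate, are valid uniformly. Everything else is a routine combination of weak convergence of measures with uniform convergence of bounded equicontinuous integrands.
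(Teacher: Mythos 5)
Your proposal assembles the right ingredients (boundedness of the Poisson kernel away from its singularity, Harnack-type equicontinuity, convergence of harmonic measure, and a ``bounded integrand against weakly converging measures'' scheme), and its overall shape matches the paper's. However, there is a genuine gap at the decisive step: you assert that the convergence of discrete harmonic measure (Lemma \ref{lem:harm-meas-conv}) yields convergence of the operators $\Ha^\d$ in operator norm and ``hence of the kernels $\T^\d\to\T$ uniformly''. This does not follow. Lemma \ref{lem:harm-meas-conv} gives convergence of harmonic measures of boundary \emph{arcs}, i.e.\ a weak-type statement; the kernel $\T^\d(x,y)$ is built from Poisson kernels, which are \emph{densities} (Radon--Nikodym derivatives) of these measures, and pointwise or uniform convergence of densities is strictly stronger than convergence of the measures of arcs. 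The whole difficulty of the lemma is precisely to bridge this gap, and your proposal bridges it by assertion.

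The paper's proof supplies the missing mechanism: it rewrites the ($l=1$) integral as $\int_{c\times d}P^{A\sm c}_{x_0}(x,y)P^{A\sm d}_{x_0}(y,x)\,\mu^{A\sm c}_{x_0}(dx)\,\mu^{A\sm d}_{x_0}(dy)$, partitions the cuts $c$ and $d$ into small arcs $c_i$, $d_j$ with marked points, and approximates the integral by the Riemann sum $\sum_{i,j}\mu^{A^\d\sm c^\d}_{y_j^\d}(c_i^\d)\,\mu^{A^\d\sm d^\d}_{x_i^\d}(d_j^\d)$ --- a finite sum of harmonic measures of arcs evaluated at points, to which Lemma \ref{lem:harm-meas-conv} and Remark \ref{rem:harm-meas-conv-bound} apply directly. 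The error in the Riemann-sum approximation is controlled on the discrete side by the Harnack inequality (for interior arcs, at distance $\geq\e^2$ from $\partial A$ and of diameter $\leq\e^3$) together with Lemma \ref{lem:poisson-kernel-bounded}. Note also that the paper does not handle the endpoints of the cuts by any orthogonality or equicontinuity claim, as you suggest; it simply arranges for the two end arcs $c_1,c_n$ to have harmonic measure less than $\e$, so their contribution is negligible. Your argument would become correct if you replaced the ``operator norm, hence uniform kernel convergence'' step by this partition-and-Riemann-sum argument (or an equivalent arcs-to-densities argument), which is essentially reconstructing the paper's proof.
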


\begin{proof}
We only discuss the case $l=1$ for the first integral, the general case being handled similarly.

Let us write out explicitly the integral we are interested in:
$$
\int_{y \in d^\d} \T^\d(y,y) \ \mu^{A^\d \sm d^\d}_{x^\d_0}(dy) = \int_{x\in c^\d, y \in d^\d} P^{A^\d\sm c^\d}_{x^\d_0} (x,y) P^{A^\d\sm d^\d}_{x^\d_0}(y,x) \ \mu^{A^\d \sm c^\d}_{x^\d_0}(dx)\mu^{A^\d \sm d^\d}_{x^\d_0}(dy).
$$

Fix some small enough $\e >0$. We split the cut $c$ in $n$ consecutive intervals $c_1,\cdots,c_n$, each coming with a marked point $x_i$. We choose this partition such that the intervals $c_i$ are of diameter less than $\e^3$ for $1<i<n$, such that $\mu^{A \sm d}_{x_0}(c_1\cup c_n)<\e$ and such that, for all $y\in d$, the continuous Poisson kernels $P^{A\sm d}_{x_0}(y,x)$ varies by less than $\e$ on each $c_i$. Let us also assume that the distance between $c_i$ and $\partial A$ is more than $\e^2$ for any $1<i<n$.

Note that on a Carath\'eodory-compact set of decorated domain $(A,x_0,c,d)$, we can always find such a decomposition in a number of intervals uniformly bounded by some integer $n(\e)$ independent of the domain. Moreover there is a uniform bound $M$ on the values taken by the continuous and discrete Poisson kernels $P^{A\sm c}_{x_0}(x,y)$ and $P^{A\sm d}_{x_0}(y,x)$ when $(x,y)$ runs over $c\times d$ (by Carath\'eodory continuity of the continuous Poisson kernel, respectively by Lemma \ref{lem:poisson-kernel-bounded} in the discrete case). There also is a uniform lower bound on the distance between the cuts $c$ and $d$, that we can assume to be bigger than $\e^2$.

We similarly split $d$ in consecutive intervals $d_1,\cdots,d_m$, and consider points $y_j$ in each of these intervals.

We can then approximate our integrals by a Riemann sum (see Lemma \ref{lem:approx-int-sum}):
$$
\left|\int_{x\in c^\d, y \in d^\d} P^{A^\d\sm c^\d}_{x^\d_0} (x,y) P^{A^\d\sm d^\d}_{x^\d_0}(y,x) \ \mu^{A^\d \sm c^\d}_{x^\d_0}(dx)\mu^{A^\d \sm d^\d}_{x^\d_0}(dy)-\sum_{i,j}\mu^{A^\d \sm c^\d}_{y^\d_j}(c^\d_i)\mu^{A^\d \sm d^\d}_{x^\d_i}(d^\d_j)\right|\leq 16 M^2\e + 2c M^2\e
$$
and
$$
\left|\int_{x\in c, y \in d} P^{A\sm c}_{x_0} (x,y) P^{A\sm d}_{x_0}(y,x) \ \mu^{A \sm c}_{x_0}(dx)\mu^{A \sm d}_{x_0}(dy)- \sum_{i,j}\mu^{A \sm c}_{y_j}(c_i)\mu^{A \sm d}_{x_i}(d_j)\right|\leq 2 M \e.
$$

For a fixed $\e$, the integers $n$ and $m$ are uniformly bounded on Carath\'eodory-compact sets of decorated domains. We can thus conclude by uniform convergence of harmonic measure of intervals (Lemma \ref{lem:harm-meas-conv} and Remark \ref{rem:harm-meas-conv-bound}).
\end{proof}

\begin{lem}\label{lem:approx-int-sum}
We have
$$
\left|\int_{x\in c^\d, y \in d^\d} P^{A^\d\sm c^\d}_{x^\d_0} (x,y) P^{A^\d\sm d^\d}_{x^\d_0}(y,x) \ \mu^{A^\d \sm c^\d}_{x^\d_0}(dx)\mu^{A^\d \sm d^\d}_{x^\d_0}(dy)-\sum_{i,j}\mu^{A^\d \sm c^\d}_{y^\d_j}(c^\d_i)\mu^{A^\d \sm d^\d}_{x^\d_i}(d^\d_j)\right|\leq 16 M^2\e + 2c M^2\e
$$
and
$$
\left|\int_{x\in c, y \in d} P^{A\sm c}_{x_0} (x,y) P^{A\sm d}_{x_0}(y,x) \ \mu^{A \sm c}_{x_0}(dx)\mu^{A \sm d}_{x_0}(dy)- \sum_{i,j}\mu^{A \sm c}_{y_j}(c_i)\mu^{A \sm d}_{x_i}(d_j)\right|\leq 2 M \e.
$$
\end{lem}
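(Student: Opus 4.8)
The plan is to decompose both the integral and the finite sum over the product cells $c_i\times d_j$ of the two partitions, and to bound the resulting mismatch cell by cell. The only identity I need is that shifting the base point of a harmonic measure multiplies it by a Poisson-kernel factor — the very definition of the (discrete or continuous) Poisson kernel (Definitions~\ref{def:disc-Poisson-kernel}, \ref{def:cont-Poisson-kernel}): $\mu^{A\sm c}_{y_j}(dx)=P^{A\sm c}_{x_0}(x,y_j)\,\mu^{A\sm c}_{x_0}(dx)$ on $c$ and $\mu^{A\sm d}_{x_i}(dy)=P^{A\sm d}_{x_0}(y,x_i)\,\mu^{A\sm d}_{x_0}(dy)$ on $d$. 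Hence $\mu^{A\sm c}_{y_j}(c_i)\,\mu^{A\sm d}_{x_i}(d_j)$ equals $\int_{c_i\times d_j}P^{A\sm c}_{x_0}(x,y_j)P^{A\sm d}_{x_0}(y,x_i)\,\mu^{A\sm c}_{x_0}(dx)\mu^{A\sm d}_{x_0}(dy)$, and in both settings the quantity to bound is $\sum_{i,j}\int_{c_i\times d_j}\bigl(P^{A\sm c}_{x_0}(x,y)P^{A\sm d}_{x_0}(y,x)-P^{A\sm c}_{x_0}(x,y_j)P^{A\sm d}_{x_0}(y,x_i)\bigr)\,\mu^{A\sm c}_{x_0}(dx)\mu^{A\sm d}_{x_0}(dy)$.

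Next I would write the bracket as the telescoping sum $P^{A\sm d}_{x_0}(y,x)\bigl(P^{A\sm c}_{x_0}(x,y)-P^{A\sm c}_{x_0}(x,y_j)\bigr)+P^{A\sm c}_{x_0}(x,y_j)\bigl(P^{A\sm d}_{x_0}(y,x)-P^{A\sm d}_{x_0}(y,x_i)\bigr)$ and use that every Poisson kernel occurring is bounded by a common constant $M$ (Carath\'eodory-continuity of the continuous kernel; Lemma~\ref{lem:poisson-kernel-bounded} in the discrete case): then each cell integrand is at most $M$ times the oscillation of $y\mapsto P^{A\sm c}_{x_0}(x,y)$ over $d_j$ plus $M$ times the oscillation of $x\mapsto P^{A\sm d}_{x_0}(y,x)$ over $c_i$. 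This reduces everything to bounding those two oscillations and then summing against $\mu^{A\sm c}_{x_0}\otimes\mu^{A\sm d}_{x_0}$, whose total mass over all cells is $\mu^{A\sm c}_{x_0}(c)\,\mu^{A\sm d}_{x_0}(d)\le 1$. In the continuous setting this is immediate: the partitions were chosen precisely so that these oscillations are at most $\e$, yielding the announced $2M\e$.

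In the discrete setting (all objects now at mesh $\d$) one cannot invoke the partition choice for the \emph{discrete} kernel, so I would split the cells into interior and boundary ones. For an interior cell ($1<i<n$, $1<j<m$) the intervals $c_i,d_j$ have diameter $\le\e^3$ and lie at distance $\ge\e^2$ from $\partial A$ and from the opposite cut; there the relevant Poisson kernel is a non-negative discrete harmonic function of its interior argument, bounded by $M$ on a ball of radius of order $\e^2$, so the Harnack inequality (Lemma~\ref{lem:harnack}) bounds its oscillation over the cell by $O(cM\e)$ with $c$ the absolute Harnack constant — uniformly in $\d$ — and summing over interior cells gives $O(cM^2\e)$, i.e. the term $2cM^2\e$. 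On a cell meeting one of the finitely many boundary intervals (the tiny arcs of $c$ and $d$ abutting $\partial A$) I would keep only the crude bound $2M^2$ for the cell integrand; the total $\mu^{A\sm c}_{x_0}\otimes\mu^{A\sm d}_{x_0}$-mass of such cells is at most $\mu^{A\sm c}_{x_0}(c_1\cup c_n)+\mu^{A\sm d}_{x_0}(d_1\cup\cdots)$, which is $O(\e)$ for $\d$ small enough by the choice of partition in the continuous domain together with the uniform convergence of harmonic measures of boundary arcs (Lemma~\ref{lem:harm-meas-conv}), the Beurling estimate (Lemma~\ref{lem:beurling}) keeping the harmonic mass of these arcs small independently of $\d$; this contributes $O(M^2\e)$, the term $16M^2\e$. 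The constants $M$, the Harnack constant, the integers $n(\e),m(\e)$ and the distances of order $\e^2$ are all uniform on Carath\'eodory-compact sets of decorated domains, which gives the stated uniformity.

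The main obstacle will be this last point: securing an oscillation bound for the \emph{discrete} Poisson kernel that is uniform in the mesh $\d$, without circularly invoking the very convergence (Lemma~\ref{lem:cvintegrals}, and ultimately Proposition~\ref{prop:cv-det}) that this estimate is meant to feed. The way around it is to rely only on scale-invariant a priori harmonic bounds — Harnack away from $\partial A$, Beurling near the finitely many cut endpoints, boundedness of the Poisson kernel away from its singularity — all valid uniformly in $\d$ and over Carath\'eodory-compact families, rather than on any comparison of the discrete kernel with its continuous limit.
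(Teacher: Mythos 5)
Your proposal is correct and follows essentially the same route as the paper: the same identity converting $\mu^{A\sm c}_{y_j}(c_i)\mu^{A\sm d}_{x_i}(d_j)$ into a cell integral of Poisson kernels, the same telescoping into two oscillation terms each multiplied by the uniform bound $M$, Harnack on the interior cells (diameter $\e^3$ versus distance $\e^2$, giving oscillation $cM\e$), a crude $2M^2$ bound on the boundary cells whose harmonic mass is $O(\e)$ for small $\d$ via Lemma~\ref{lem:harm-meas-conv}, and uniform continuity of the continuous kernel for the $2M\e$ bound. The only cosmetic difference is your additional appeal to the Beurling estimate for the boundary arcs, which the paper does not need at this point.
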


\begin{proof}
Let us first discuss the discrete statement. We will use the following relationship between Poisson kernel and harmonic measures:
$$
\int_{x\in c^\d_i, y \in d^\d_j} P^{A^\d\sm c^\d}_{x^\d_0} (x,y^\d_j) P^{A^\d\sm d^\d}_{x^\d_0}(y,x^\d_i) \ \mu^{A^\d \sm c^\d}_{x^\d_0}(dx)\mu^{A^\d \sm d^\d}_{x^\d_0}(dy) = \mu^{A^\d \sm c^\d}_{y^\d_j}(c^\d_i)\mu^{A^\d \sm d^\d}_{x^\d_i}(d^\d_j).
$$
Note that if $1<i<n$, by the Harnack inequality \ref{lem:harnack}, we have
\begin{eqnarray}
\left|\int_{x\in c^\d_i, y \in d^\d} P^{A^\d\sm c^\d}_{x^\d_0} (x,y) [P^{A^\d\sm d^\d}_{x^\d_0}(y,x)-P^{A^\d\sm d^\d}_{x^\d_0}(y,x^\d_i)] \ \mu^{A^\d \sm c^\d}_{x^\d_0}(dx)\mu^{A^\d \sm d^\d}_{x^\d_0}(dy)\right| \nonumber \\
\leq \int_{x\in c^\d_i, y \in d^\d} P^{A^\d\sm c^\d}_{x^\d_0} (x,y) [c \frac{\e^3}{\e^2}P^{A^\d\sm d^\d}_{x^\d_0}(y,x_i^\d)] \ \mu^{A^\d \sm c^\d}_{x^\d_0}(dx)\mu^{A^\d \sm d^\d}_{x^\d_0}(dy) \leq c M^2\e \mu^{A^\d \sm c^\d}_{x^\d_0}(c^\d_i). \nonumber
\end{eqnarray}

And we can sum these inequalities over $i$ (as $\sum_i \mu^{A^\d \sm c^\d}_{x^\d_0}(c^\d_i) \leq \mu^{A^\d \sm c^\d}_{x^\d_0}(c^\d) \leq 1 $).

Moreover, for the boundary terms, note that for $\d$ small enough:
$$
\left|\int_{x\in c^\d_1, y \in d^\d} P^{A^\d\sm c^\d}_{x^\d_0} (x,y) [P^{A^\d\sm d^\d}_{x^\d_0}(y,x)-P^{A^\d\sm d^\d}_{x^\d_0}(y,x^\d_1)] \ \mu^{A^\d \sm c^\d}_{x^\d_0}(dx)\mu^{A^\d \sm d^\d}_{x^\d_0}(dy)\right| \leq 2 M^2 \mu^{A \sm c}_{x^\d_0}(c^\d_1) \leq 4 M^2 \e,
$$
thanks to the convergence of the discrete harmonic measure $\mu^{A^\d \sm c^\d}_{x^\d_0}(c^\d_1)$ towards $\mu^{A \sm c}_{x_0}(c_1) \leq \e$ (Lemma \ref{lem:harm-meas-conv}).

Summing inequalities of this type yields the claim.

\hspace{.3cm}

The continuous statement is easier, since uniform continuity of the Poisson kernel gives, for $1\leq i\leq n$:
$$
\left|\int_{x\in c_i, y \in d} P^{A\sm c}_{x_0} (x,y) [P^{A\sm d}_{x_0}(y,x)-P^{A\sm d}_{x_0}(y,x_i)] \ \mu^{A \sm c}_{x_0}(dx)\mu^{A \sm d}_{x_0}(dy)\right| \leq M\e\mu^{A \sm c}_{x_0}(c_i).
$$
\end{proof}

\begin{lem}\label{lem:detisnonzero}
The determinants $\det_F(\Id + \T)$ and $\det_F(\Id + \T_{K})$ are positive and continuous as functions of decorated domains for the Carath\'eodory topology.
\end{lem}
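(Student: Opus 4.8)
The plan is to treat positivity and continuity separately, and in both cases to handle $\Id+\T$ and $\Id+\T_K$ in parallel. For positivity, I would read both determinants through the loop-measure identity of Lemma~\ref{lem:ratio-det}. Applying its continuous form with $K_1=c$ and $K_2=d=d_1\cup d_2$, and with the Neumann part of the boundary taken empty for $\T$ (domain $A$) resp.\ equal to $\B_K$ for $\T_K$ (domain $A\sm K$), gives
$$
\det_F(\Id+\T)=e^{-\mu^{\lo}_{A}\{l:\,l\cap c\neq\emptyset,\ l\cap d\neq\emptyset\}},\qquad \det_F(\Id+\T_K)=e^{-\mu^{\lo}_{A\sm K}\{l:\,l\cap c\neq\emptyset,\ l\cap d\neq\emptyset\}}.
$$
(Lemma~\ref{lem:ratio-det} is stated for connected $K_2$, but the identity between the loop mass and the Fredholm determinant holds verbatim for $K_2=d_1\cup d_2$; only the rewriting as a ratio of $\z$-regularized determinants fails for slit domains.) Since $c$ and $d$ are disjoint compacts, $\rho:=\mathrm{dist}(c,d)>0$, so any Brownian loop meeting both has diameter $\geq\rho$; as the Brownian loop measure of loops of diameter $\geq\rho$ contained in a bounded domain is finite (cover the domain by finitely many balls of radius $\rho/4$), the exponents are finite and nonnegative and both determinants lie in $(0,1]$. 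An alternative route avoiding Lemma~\ref{lem:ratio-det}: the determinant is $\geq0$ as the locally uniform limit of the positive discrete determinants $\det(\Id+\T^\d)$ of Proposition~\ref{prop:cv-det}, and it is nonzero because $\Id-\Ha^{c\rightarrow d}_A\circ\Ha^{d\rightarrow c}_A$ is invertible --- the positive self-adjoint compact operator $\Ha^{c\rightarrow d}_A\circ\Ha^{d\rightarrow c}_A$ has spectral radius $<1$, the Dirichlet boundary $\partial A$ precluding a nonzero fixed point --- and non-vanishing of the Fredholm determinant detects invertibility (\cite{SimTrace}, Chapter~3).

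For continuity I would argue from the series defining the Fredholm determinant, as in the proof of Proposition~\ref{prop:cv-det}: one writes $\det_F(\Id+\T)=\sum_{n\geq 0}\frac{1}{n!}\int\det[\T(x_i,x_j)]_{i,j}\,d\mu^{\otimes n}$, where $\mu$ is the relevant harmonic measure on the cut and $\T(x,x')$ its bicontinuous kernel --- a composition of two continuous Poisson kernels integrated against harmonic measure, written out in the proofs of Proposition~\ref{prop:cv-det} and Lemma~\ref{lem:approx-int-sum}. Each such kernel, hence each term of the series, is a continuous function of the decorated domain $(A,x_0,c,d)$ for the Carath\'eodory topology --- the continuous counterpart of Lemma~\ref{lem:harm-meas-conv}, already used in the proof of Lemma~\ref{lem:cvintegrals}; for $\T_K$ one additionally uses that the harmonic objects in $A\sm K$ depend continuously on the hull $K$, which, as in Section~\ref{def}, is compared through the Carath\'eodory topology of $(A\sm K,x_0)$. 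Because $c$ and $d$ stay at distance $\geq\rho$, the kernels $\T(x,x')$ are bounded by a constant $M$ that is uniform on Carath\'eodory-compact families (uniform boundedness of the Poisson kernel away from its boundary singularity, the continuous analogue of Lemma~\ref{lem:poisson-kernel-bounded}), so Hadamard's inequality bounds the $n$-th term by $M^{2n}n^{n/2}/n!$ and the series converges locally uniformly. A locally uniform limit of continuous functions being continuous, $\det_F(\Id+\T)$ and $\det_F(\Id+\T_K)$ are continuous.

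The main obstacle is the uniformity issue already underlying Lemma~\ref{lem:cvintegrals}: ensuring the bound $M$ and the continuity of the individual terms are uniform over Carath\'eodory-compact families of decorated domains, with no degeneration near the corners where $c$ and $d$ meet $\partial A$. This is precisely why one pairs $P^{A\sm c}_{x_0}$ only with points of $d$ and $P^{A\sm d}_{x_0}$ only with points of $c$, staying away from the singularities of these kernels, which lie along $c$ resp.\ $d$, and why a uniform lower bound on $\mathrm{dist}(c,d)$ is needed; a Beurling-type estimate (Lemma~\ref{lem:beurling}) handles the behaviour near $\partial A$. If one instead deduces continuity from the loop-mass formula, the obstacle becomes the Carath\'eodory-continuity of $\mu^{\lo}_A\{l:l\cap c\neq\emptyset,\ l\cap d\neq\emptyset\}$, which one can obtain from conformal invariance of the Brownian loop measure together with dominated convergence, at comparable cost.
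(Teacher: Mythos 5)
Your proposal is correct. The continuity half coincides with the paper's (one-line) argument: Carath\'eodory-continuity of harmonic measures and Poisson kernels makes each term of the Fredholm series continuous in the decorated domain, and the Hadamard bound already used for Proposition \ref{prop:cv-det} makes the series converge locally uniformly. For positivity, the paper's proof is exactly your ``alternative route'': $\T$ is a strict contraction for the supremum norm (from any point of $c$, Brownian motion in $A\sm d$, resp.\ $A\sm(K\cup d)$, reaches the Dirichlet boundary before $d$ with probability bounded below), so $\Id+\T$ is invertible via its Neumann series and the Fredholm determinant is nonzero by Theorem 3.5 b) of \cite{SimTrace}. Your primary route through Lemma \ref{lem:ratio-det} is genuinely different and buys a bit more: it exhibits the determinant as $e^{-\mu^{\lo}\{\cdots\}}\in(0,1]$, giving positivity directly rather than only non-vanishing (positivity, as opposed to non-vanishing, is in fact left implicit in the paper's proof), at the cost of extending the loop-mass/Fredholm identity to the disconnected set $d_1\cup d_2$ and checking finiteness of the relevant loop mass, both of which you handle correctly. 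One small caveat: the operator $\Ha^{c\rightarrow d}_{A}\circ\Ha^{d\rightarrow c}_{A}$ is not obviously self-adjoint or positive on $L^2\bigl(d,\mu^{A\sm d}_{x_0}\bigr)$ --- the two harmonic-extension operators are adjoints only with respect to excursion-type measures, not harmonic measure seen from a fixed point --- but this is harmless, since the spectral radius bound you need follows from the sup-norm contraction with no spectral theory.
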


\begin{proof}
The operator $\T$ being a strict contraction of the space of functions on $d$ equipped with the supremum norm, it is possible to build explicitly an inverse for the operator $\Id + \T$ - namely $(\Id + \T)^{-1} = \sum (-1)^n \T^n$ - and Fredholm determinants of invertible operators are non-zero (Theorem 3.5 b) in \cite{SimTrace}). Continuity of the determinants follows from the Carath\'eodory-continuity of harmonic measures and Poisson kernels.
\end{proof}

\subsection{Convergence of the exploration process $e^\d_{A^\d}$}\label{cv-exp-process}

We are now ready to prove the convergence of the exploration process $e^\d_{A^\d}$.

Recall that the processes $e^\d_{A^\d}$ form a tight family (Proposition \ref{prop:precompexpl}). It is hence sufficient to uniquely characterize the law of any subsequential limit $e_A$. We may assume (via Skorokhod's representation theorem) that the convergence is almost sure.
Let us consider the first time $T$ (resp. $T^\d$) when the trace of $e_A([0,T])$ (resp. $e^\d_{A^\d}([0,T^\d])$) disconnects the inner boundary of $A$ from the starting point $a$.
\begin{figure}[htb]
\begin{center}
\includegraphics[width = 8cm]{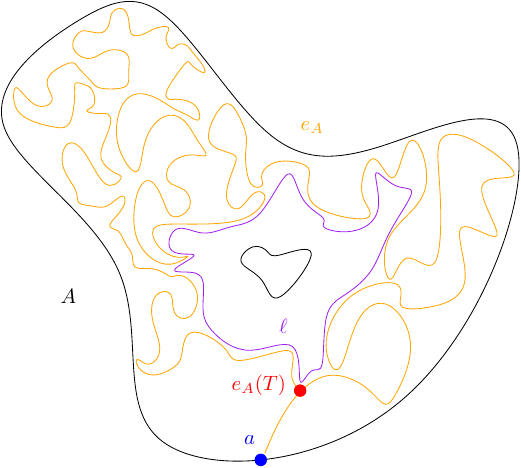}
\caption{The time $T$.}
\end{center}
\end{figure}

\begin{lem}\label{lem:cv-explafterT}
After time T, $e_A$ behaves as chordal $\SLE_8$ aimed at $a$ in the remaining domain.
\end{lem}

\begin{proof}
The time $T$ is the the limit of the times $T^\delta$, and moreover is the first time when the complement of $e_A([0,T])$ is disconnected (both facts follow from Proposition \ref{prop:nohidingloops}).
The complement of $e_A([0,T])$ then almost surely consists of two connected components. One of this connected components has the inner boundary of $A$ as part of its boundary. The exploration process $e_A$ is never going to visit this domain\footnote{This region would be covered by an exploration process started at a point on the inner boundary.}. The other connected component is a simply-connected open set with two marked boundary points $e_A(T)$ and $a$ on its boundary, with natural wired (resp. free) boundary conditions on the counterclockwise boundary arc $\arcperso{e_A(T),a}$ (resp. $\arcperso{a,e_A(T)}$).

As in the proof of Theorem \ref{thm:cv-explbubble}, we can use Proposition \ref{prop:USTcvSLE} to show that after time $T$, $e_A$ has the law of SLE$_8$ aimed at $a$ in this remaining domain. 
\end{proof}

We thus only have to characterize the behavior of $e_A$ up to time $T$. In order to do this, let us consider the hitting time $T_{c}^{\e}$ of the $\e$-neighborhood of a cut $c$.

\begin{lem}\label{lem:charact-cv-expl}
The law of $e_A$ stopped at all of the times $T_{c}^{\e}$ is enough to characterize the law of $e_A$ until the disconnection time $T$ of the point $a$ from the inner boundary of the annulus.
\end{lem}

\begin{proof}
Let us fix a countable family of cuts $\C$, that is dense in the set of $\C^1$ cuts equipped with the topology of uniform convergence up to reparametrization, and consider an enumeration $n\mapsto(c_n,\e_n)$ of the set of couples $\C \times 2^{-\N}$. We call $\widetilde{T}_n=T_{c_n}^{\e_n}$ the stopping time corresponding to the $n$-th couple.

Let us consider the family of stopping times $T_N=\sup_{n\leq N}\widetilde{T}_{n} \nearrow \sup_n \widetilde{T}_n = T$. Indeed, it is equivalent to have a curve $\gamma$ disconnect the inner boundary and the outer boundary (time $T$) or to have $\gamma$ touch any curve that connects the two boundaries (which is time $\sup_n \widetilde{T}_n$).

If the law of $e_A$ until the times $T_N$ and $\widetilde{T}_{N+1}$ is known, we can deduce its law until the time $T_{N+1}$. Indeed, the event  $E = \{\widetilde{T}_{N+1} > T_N \}$ is $\F_{T_N}\cap\F_{\widetilde{T}_{N+1}}$-measurable, and the law of $e_A$ until time $T_{N+1}$ is given on $E$ by the law of $e_A$ until time $\widetilde{T}_{N+1}$ conditioned on $E$, and on $E^c$ by the law of $e_A$ until time $T_N$ conditioned on $E^c$.
\end{proof}

We are now ready to prove Theorem \ref{thm:cv-expl}.

\begin{thm}\label{thm:cv-expl}
The exploration process $e^\d_{A^\d}$ converges in law (for the topology $\Top$), as $\d$ goes to $0$, towards a continuous process $e_A$ characterized by the following property. Until the first hitting time $T_{c}^{\e}$ of the $\e$-neighborhood of a cut $c$, $e_A$ is absolutely continuous with respect to counterclockwise $\SLE_8(2)$ started at $a$ in the domain $A\sm c$, with a Radon-Nikodym derivative given by:
$$
\frac{{\det}_F(\Id - \Ha^{c\rightarrow d}_{A\sm{K}} \circ \Ha^{d\rightarrow c}_{A\sm{K}})} {{\det}_F(\Id - \Ha^{c\rightarrow d}_{A} \circ \Ha^{d\rightarrow c}_{A})},
$$
where $K$ is the trace of the exploration process $e_A([0,T_{c}^{\e}])$, and boundary conditions are Neumann on the counterclockwise boundary arc $\arcperso{a,e_A(T_{c}^{\e})}$ and Dirichlet elsewhere.
\end{thm}

\begin{proof}
Before time $T_{c^\d}^{\e}$, the discrete exploration process $e^\d_{A^\d}$ in the annulus is absolutely continuous with respect to the process $e^\d_{A^\d\sm c^\d}$, and the Radon-Nikodym derivative is known (Lemma \ref{lem:DRN-expl}). We proved convergence of the reference exploration process $e^\d_{A^\d\sm c^\d}$ (Theorem \ref{thm:cv-explbubble}) and uniform convergence on Carath\'eodory-compact sets of the Radon-Nikodym derivative (Proposition \ref{prop:cv-det} and Lemma \ref{lem:detisnonzero}). Hence the law of any subsequential limit $e_A$ until time $T_{c}^{\e}$ is uniquely characterized. Lemmas \ref{lem:charact-cv-expl} and \ref{lem:cv-explafterT} allow us to conclude.
\end{proof}

\section{An $\SLE_2$ loop measure}\label{loop}

\subsection{On the loop measure $\mu^\#_A$}\label{loop-prop}

We may now discuss the loop measure we are interested in. Let $\l^\d_{A^\d}$ be the boundary of the discrete exploration process $e^\d_{A^\d}$.

\begin{thm1}
The random loop $\l^\d_{A^\d}$ converges in law (for the topology $\Top$ of uniform convergence up to reparametrization) towards a simple curve $\l_A$ whose range is almost surely the boundary of the continuous exploration process $e_A$, which is also the interface between the inner and outer component of the wired UST in $A^\d$.
\end{thm1}

\begin{proof}
The family of curves $\l^\d_{A^\d}$ is tight (as in the proof of Proposition \ref{prop:precompexpl}). Any subsequential limit $\l_A$ has to contain the boundary of $e_A$ in its range. By Proposition \ref{prop:nohidingloops}, the loop $\l_A$ is almost surely simple. This gives the reverse inclusion: the range of $\l_A$ is the boundary of $e_A$.
\end{proof}

\begin{rem}\label{rem:outerisinner}
Working with the inner tree and its exploration process, one can recover $\l_A$ as the boundary of the inner exploration process.
\end{rem}

\begin{prop}\label{prop:loops-avoid-boundary}
The law $\mu^\#_A$ of the random loop $\l_A$ is supported on loops that do not touch the boundary $\partial A$. 
\end{prop}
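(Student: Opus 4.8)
The plan is to show that with probability arbitrarily close to $1$, the loop $\l_A$ stays at distance at least some $\d>0$ from $\partial A$, and then let $\d\to 0$. The natural tool is the boundary arm estimates already established in Section~\ref{precomp}, transferred to the continuous limit. Concretely, recall that $\l_A$ is (the range of) the interface between the inner and outer components of the UST; in the discrete picture, a point of $\l^\d_{A^\d}$ that comes within distance $r$ of a boundary point $p$ forces two disjoint branches of the spanning tree to cross the circular annulus $A(p,r,\e)$: namely the two tree-branches on either side of the interface, one belonging to the component attached to the boundary near $p$ and one to the other component. (If $p$ is on the outer boundary, the near branch is in the outer tree; the interface separates it from a branch of the inner tree, which must therefore cross the annulus to reach $p$'s neighborhood.)

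First I would fix $\e>0$ and invoke the boundary 2-arm estimate (Lemma~\ref{lem:boundary-2-arm}): there exists $r=r(\e)\in(0,\e)$ such that for all sufficiently small mesh size $\d$, the probability that some boundary point $p$ has two disjoint branches of the UST crossing $A(p,r,\e)$ is at most $\e$. On the complementary event, $\l^\d_{A^\d}$ stays at distance at least $r$ from $\partial A$. Next, using the convergence $\l^\d_{A^\d}\to\l_A$ for the topology $\Top$ (Theorem~\ref{thm:cv-loop}) together with Skorokhod coupling, and the fact that $\{\text{range}(\l_A)\text{ stays }\geq r/2\text{ from }\partial A\}$ is (essentially) open for $\Top$-convergence of ranges, I would conclude that $\mu^\#_A$ assigns mass at least $1-\e$ to loops staying at distance $\geq r/2$ from $\partial A$. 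Since $\e$ is arbitrary, letting $\e\to 0$ gives that $\mu^\#_A$-a.e. loop is at positive distance from $\partial A$, i.e. does not touch $\partial A$.

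The main obstacle is the passage to the limit: "being at positive distance from the boundary" is not a closed condition under $\Top$ (a sequence of loops bounded away from $\partial A$ could converge to one touching it), and a subsequential limit of loops avoiding the boundary could in principle pick up boundary points. To handle this cleanly I would phrase the event at the discrete level as "$\l^\d_{A^\d}$ is contained in the compact set $K_r=\{z\in A: \mathrm{dist}(z,\partial A)\geq r\}$", note that $\{\gamma : \mathrm{range}(\gamma)\subset K_r\}$ is closed for $\Top$, and apply the portmanteau inequality $\P[\l_A\subset K_r]\geq \limsup_\d \P[\l^\d_{A^\d}\subset K_{r'}]$ for any $r'>r$ — more carefully, I would use that $K_{r}$ is contained in the interior of $K_{r'}$ for $r>r'$, so uniform closeness of $\l^\d_{A^\d}$ to $\l_A$ forces $\l_A\subset K_{r'}$ once $\l^\d_{A^\d}\subset K_r$ and $\d$ is small. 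Thus $\mu^\#_A\{\,\l : \l\not\subset K_{r'}\,\}\leq\e$ for all $\e$, hence $\mu^\#_A$-a.e. loop lies in $\bigcup_{r>0}K_r$, which is exactly the set of loops avoiding $\partial A$. A minor point to verify along the way is the claim that a loop-point within distance $r$ of $\partial A$ genuinely produces a boundary 2-arm event in the UST and not merely in the interface; this follows because each side of the interface borders a tree branch (as recalled after Definition~\ref{def:explproc}), and the two branches belong to the two distinct wired components, so they cannot be joined near $p$, giving two disjoint arms.
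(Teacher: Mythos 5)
Your passage to the limit (phrasing the event as containment in the closed set $K_r$, using that $\{\gamma:\mathrm{range}(\gamma)\subset K_r\}$ is $\Top$-closed, and applying portmanteau) is correct, and is in fact more careful than what the paper writes. The gap is in the discrete step: the reduction to the boundary $2$-arm estimate (Lemma \ref{lem:boundary-2-arm}) does not work, because a close approach of the interface $\l^\d_{A^\d}$ to a boundary point $p$ only forces \emph{one} arm of the spanning tree across $A(p,r,\e)$, not two. Say $p$ lies on the outer boundary. The primal vertices flanking the loop near $p$ belong one to the inner component $T_{in}$ and one to the outer component $T_{out}$. The $T_{in}$-vertex must be joined in the tree to the inner boundary, so its tree path does cross the annulus: that is one arm. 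But the $T_{out}$-vertex is attached to the \emph{wired outer boundary}, which passes right through $B(p,r)$, so its tree path may terminate on the boundary without ever leaving $B(p,r)$ — your assertion that it ``must cross the annulus to reach $p$'s neighborhood'' is exactly backwards for the near component, since it is already in $p$'s neighborhood. Concretely, the configuration in which the loop makes a one-lattice-step-wide finger toward $p$, enclosing a single branch of $T_{in}$ that dips to distance $r$ from $p$, while all nearby $T_{out}$-vertices hook directly onto the adjacent wired boundary, produces exactly one interior arm crossing $A(p,r,\e)$; Lemma \ref{lem:boundary-2-arm} says nothing about it.

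What rules out this configuration — and what the paper invokes — is Lemma \ref{lem:dualtrunkdisjointfromboundary}: with probability at least $1-\e$, every point at distance less than $r$ from the boundary is connected to the boundary \emph{in the tree within a ball of radius $\e$}. A vertex of $T_{in}$ at distance less than $r$ from the outer boundary is connected in the tree only to the inner boundary, hence (once $\e$ is smaller than the separation between the two boundary components) violates this event; symmetrically for $T_{out}$ and the inner boundary. On the complementary event the loop, which is sandwiched at lattice distance between the two components, stays at distance at least $r-O(\d)$ from $\partial A$, and your limiting argument then goes through verbatim. So the fix is only to replace the $2$-arm estimate by Lemma \ref{lem:dualtrunkdisjointfromboundary}; the architecture of your proof is otherwise sound.
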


\begin{proof}
This follow directly from the arm estimate Lemma \ref{lem:dualtrunkdisjointfromboundary}.
\end{proof}

\begin{rem}\label{rem:loopSLE2}
The measure $\mu^\#_{A}$ is locally (absolutely continuous with respect to) the boundary of an $\SLE_8$ process, and hence a version of $\SLE_2$ (\cite{Dub_dual}).
\end{rem}

Let us now investigate the conformal covariance of the family of measures $\mu^\#_A$, i.e. how these measures behave under conformal mappings.

\begin{prop}\label{prop:covar-conf}
Let $\phi$ be an injective holomorphic map from an annulus $A'$ to another annulus $A$ (where $\phi(A')$ is a retract of $A$).

We have the following absolute continuity formula:
$$
\frac{\text{\normalfont d} \phi_* \mu^\#_{A'} }{\text{\normalfont d}{\mu^\#_{A}}}(\l) = e^{-\M(A,\phi(A');\l)} \ind_{\l \subset \phi(A')}.
$$
\end{prop}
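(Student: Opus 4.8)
The statement to prove is Proposition \ref{prop:covar-conf}: the conformal covariance of the loop measure $\mu^\#_A$, with Radon--Nikodym derivative $e^{-\M(A,\phi(A');\l)}\ind_{\l\subset\phi(A')}$. Since $\mu^\#$ is defined via the exploration process $e_A$, and both $\mu^\#_A$ and $\M$ are ultimately limits of discrete quantities, my plan is to establish the identity first at the discrete level and then pass to the limit using the convergence results already obtained. Conformal invariance of the limiting loop being clear from the intrinsic characterization in Theorem \ref{thm:cv-expl} (the limit is characterized via Radon--Nikodym derivatives of $\SLE_8$-type objects that transform conformally), the real content is the restriction/covariance formula comparing $A$ to a sub-annulus $A'=\phi(A')$.

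\textbf{Step 1: discrete restriction identity.} I would take $A^\d$ a natural approximation of $A$ and let $A'^\d\subset A^\d$ be the approximation of the sub-annulus $\phi(A')$. For a loop $\l^\d$ generated by the wired UST on $A^\d$, conditioning on the event $\{\l^\d\subset A'^\d\}$, I claim the conditional law is, up to a density, that of the wired-UST loop in $A'^\d$. The cleanest way is to work with the exploration process and the spanning-tree counting identity already used in Lemma \ref{lem:DRN-expl}: the probability that $e^\d_{A^\d}$ traces a given initial segment staying inside $A'^\d$ is a ratio of numbers of spanning trees, and comparing $A^\d$ with $A'^\d$ yields exactly a ratio of the form
$$
\frac{\#T(A'^\d)\,\#T\big((A^\d\sm K^\d)\big)}{\#T(A^\d)\,\#T\big((A'^\d\sm K^\d)\big)},
$$
where $K^\d$ is the explored hull. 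By the matrix-tree theorem this is $\frac{\det(\D_{A'^\d})\det(\D_{A^\d\sm K^\d})}{\det(\D_{A^\d})\det(\D_{A'^\d\sm K^\d})}$, which is precisely the discrete analogue of $e^{-\M}$ as in Remark \ref{rem:MregBl} and Lemma \ref{lem:ratio-det} (a regularized mass of Brownian loops hitting $A^\d\sm A'^\d$). Carrying the explored hull $K^\d$ all the way around the annulus (i.e.\ to the disconnection time $T$, after which Lemma \ref{lem:cv-explafterT} controls the remainder as an $\SLE_8$), and using that $\l^\d$ is a functional of $e^\d$, gives the discrete density $e^{-\M^\d(A^\d,A'^\d;\l^\d)}\ind_{\l^\d\subset A'^\d}$.

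\textbf{Step 2: passage to the limit.} Here I would invoke: (i) Theorem \ref{thm:cv-loop}/\ref{thm:cv-expl} for the joint convergence of $(e^\d_{A^\d},\l^\d_{A^\d})$ and of $(e^\d_{A'^\d},\l^\d_{A'^\d})$; (ii) convergence of the discrete determinant ratios to the continuous Fredholm determinants / $\zeta$-regularized determinants, i.e.\ Proposition \ref{prop:cv-det}, Lemma \ref{lem:ratio-det} and Lemma \ref{lem:detisnonzero}, which identifies the discrete density with a discretization of $e^{-\M(A,\phi(A');\l)}$; and (iii) measurability and regularity of $\l\mapsto\M(A,\phi(A');\l)$ from Remark \ref{rem:cont-cocycle}, plus the fact that $\{\l\subset\phi(A')\}$ is (up to a $\mu^\#_A$-null boundary, handled by Proposition \ref{prop:loops-avoid-boundary}) an open event, so the indicator passes to the limit. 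Since the densities are uniformly bounded on Carath\'eodory-compact sets and the domains of integration are compact, dominated convergence upgrades convergence in law of $(\l^\d,\text{density})$ to the identity of measures $\phi_*\mu^\#_{A'}=e^{-\M(A,\phi(A');\l)}\ind_{\l\subset\phi(A')}\,\mu^\#_A$, after first using the conformal invariance of $\mu^\#$ to replace $\mu^\#_{\phi(A')}$ by $\phi_*\mu^\#_{A'}$.

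\textbf{Main obstacle.} The delicate point is Step 1 with the hull $K^\d$ run all the way to the disconnection time $T$: at that point $A^\d\sm K^\d$ degenerates (it pinches around $a$), so the determinant ratio must be interpreted carefully, and one must check that the limiting object is genuinely the full $\M(A,\phi(A');\l)$ associated to the completed loop rather than to a partial segment. I expect this requires combining the stopped-at-$T^\e_c$ analysis of Theorem \ref{thm:cv-expl} (where everything is nondegenerate) with a limiting argument letting $\e\to0$ and the cut $c$ shrink, together with the fact (Lemma \ref{lem:cv-explafterT}) that the post-$T$ part is an $\SLE_8$ whose contribution to the density is trivial because it lies in the component not separating $A$ from $A'$ — so no extra Brownian loops are counted. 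A secondary technical nuisance is controlling that $\mu^\#_A\{\l\subset\partial\phi(A')\}=0$ and more generally that the boundary of the event $\{\l\subset\phi(A')\}$ is $\mu^\#_A$-negligible, which follows from Proposition \ref{prop:loops-avoid-boundary} applied in $\phi(A')$ together with the fact that $\phi(A')$ is a retract of $A$ with boundary at positive distance from $\partial A$.
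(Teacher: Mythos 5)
Your overall strategy (discrete spanning--tree ratio $\to$ Laplacian determinants $\to$ Fredholm determinants $\to$ limit) is the right one, but the way you set up the discrete identity creates the very obstacle you flag at the end, and that obstacle is not resolved; the paper's proof avoids it entirely. You condition on an initial segment of the \emph{exploration process} and then try to run the explored hull $K^\d$ all the way to the disconnection time $T$. At that point the comparison degenerates, and moreover the density you obtain is a density for the exploration process, not for the loop: given the loop, the exploration of the outer tree is still random, so you would additionally have to show that your ratio, evaluated at the full hull, is a function of $\l^\d$ alone (or integrate it out against the conditional law of $e^\d$ given $\l^\d$). None of this is carried out. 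The paper instead conditions \emph{directly on the loop}: the probability that the UST interface equals $\l^\d$ is $\#T(A^\d\sm\l^\d)/\#T(A^\d)$ with Neumann conditions on $\l^\d$, so the Radon--Nikodym derivative is immediately $\det(\D_{A^\d})\det(\D_{{A'}^\d\sm\l^\d})\big/\det(\D_{{A'}^\d})\det(\D_{A^\d\sm\l^\d})$ with no degeneration; since $\l$ a.s.\ avoids $\partial A$ (Proposition \ref{prop:loops-avoid-boundary}) one can insert a separating curve $d$ between $\l$ and the removed compact set and convert to the Fredholm form via Lemma \ref{lem:ratio-det}, then pass to the limit exactly as in Section \ref{cv-det}. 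You should replace your Step 1 by this direct loop conditioning.

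A second gap: you treat conformal invariance of $\mu^\#$ as ``clear from the intrinsic characterization''. That characterization is built from the exploration process started on the \emph{outer} boundary, so it only yields invariance under isomorphisms preserving the outer boundary. Invariance under inversions (swapping the two boundary components) is genuinely needed --- both to reduce $\phi_*\mu^\#_{A'}$ to $\mu^\#_{\phi(A')}$ for a general embedding and to handle sub-annuli that do not share the outer boundary of $A$ --- and it requires the extra observation (Remark \ref{rem:outerisinner}) that the same loop is the boundary of the \emph{inner} exploration process. The paper's proof is organized around exactly this decomposition: an isomorphism lemma (Lemma \ref{lem:covar-conf-isom}, using inversion symmetry), a restriction lemma for subdomains sharing the outer boundary (Lemma \ref{lem:covar-conf-id}, first for smooth subdomains, then by Carath\'eodory approximation using continuity of $\M$), and the cocycle property of $\M$ to glue the two. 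Your limit-passage ingredients in Step 2 (Proposition \ref{prop:cv-det}, Lemma \ref{lem:detisnonzero}, Remark \ref{rem:cont-cocycle}, Proposition \ref{prop:loops-avoid-boundary}) are the correct ones and match the paper.
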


\begin{proof}
We prove this statement below when $\phi$ is an isomorphism (Lemma \ref{lem:covar-conf-isom}) and for subdomains $A'\subset A$ (Lemma \ref{lem:covar-conf-id}) sharing their outer boundaries with $A$. The general case follows from the cocycle property of $\M$.
\end{proof}

\begin{lem}\label{lem:covar-conf-isom}
The collection of loop measures $\mu^\#_{\cdot}$ has the following conformal invariance: given any conformal isomorphism $\phi : A \buildrel\sim\over\longrightarrow A'$, we have that $\phi_* \mu^\#_{A} = \mu^\#_{A'}$.
\end{lem}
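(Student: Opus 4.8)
The plan is to exploit the fact that both $\mu^\#_A$ and $\mu^\#_{A'}$ have been obtained as scaling limits of combinatorially defined discrete objects (Theorem \ref{thm:cv-loop}), together with the robustness of these limits under changes of the approximating graph (the convergence results of Section \ref{expl} are stated uniformly on Carath\'eodory-compact sets of decorated domains, and depend only on the conformal structure). Concretely, $\mu^\#_A$ is the law of the loop $\l_A$, which by Theorem \ref{thm:cv-loop} is the limit of the interface $\l^\d_{A^\d}$ of the wired UST on the natural approximation $A^\d$; this in turn is governed by the limit $e_A$ of the exploration process, whose law (by Theorem \ref{thm:cv-expl}) is pinned down by the family of stopping times $T^\e_c$ together with the reference $\SLE_8(2)$ law and the Radon-Nikodym derivatives expressed via the operators $\Ha$. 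Every ingredient in that characterization --- $\SLE_8(2)$ in $A\sm c$, the Fredholm determinants $\det_F(\Id-\Ha^{c\to d}_{A}\circ\Ha^{d\to c}_{A})$ and the analogous one with $K$ removed, and the stopping times --- is manifestly conformally natural: harmonic measure, Poisson kernels and hence the harmonic extension operators $\Ha$ are conformally invariant, $\SLE_8(2)$ is conformally invariant by construction (Proposition \ref{prop:bubbleSLE} and the scaling invariance of $\SLE_8$), and a cut $c$ in $A$ is carried by $\phi$ to a cut $\phi(c)$ in $A'$ with $T^\e_c$ mapping (after adjusting the $\e$-neighborhood, which washes out in the limit $\e\to0$ as in Lemma \ref{lem:charact-cv-expl}) to the corresponding stopping time.

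The steps, in order, would be: (1) Observe that $\phi$ induces a bijection of loop spaces ${\mc L}^\times(A)\to{\mc L}^\times(A')$ and of the families of cuts, compatible with the topology $\Top$. (2) Note that the continuous exploration process $e_{A'}$ is characterized, by Theorem \ref{thm:cv-expl}, by exactly the same data as $e_A$ but with all objects transported by $\phi$; since each of those objects ($\SLE_8(2)$, the $\Ha$-operators, hence the Fredholm determinants and the Radon-Nikodym derivatives of Theorem \ref{thm:cv-expl}) is invariant under the conformal map $\phi$, the push-forward $\phi_* (e_A)$ satisfies the characterizing property of $e_{A'}$, whence $\phi_*(\text{law of }e_A)=\text{law of }e_{A'}$. (3) Pass from exploration processes to loops: by Theorem \ref{thm:cv-loop}, $\l_A$ is the boundary of $e_A$ (a measurable, $\Top$-continuous functional of $e_A$ commuting with $\phi$), so $\phi_*\mu^\#_A = \mu^\#_{A'}$ follows. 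One should also record the mild point that the starting point $a$ is irrelevant (Remark after Lemma \ref{lem:SLEepsilon}), so no compatibility of base points is needed; and one should handle the two families of conformal isomorphisms of an annulus --- rotations and the inversion exchanging the two boundary components --- uniformly, which the argument does since nothing above distinguishes inner from outer boundary once we also invoke Remark \ref{rem:outerisinner}.

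The main obstacle, and the place requiring care rather than cleverness, is the logical bookkeeping of the characterization: Theorem \ref{thm:cv-expl} characterizes $e_A$ only \emph{up to} the disconnection time $T$, with the post-$T$ behavior supplied separately by Lemma \ref{lem:cv-explafterT} (a chordal $\SLE_8$ toward $a$). So one must verify conformal invariance of \emph{both} regimes and check that they glue: the pre-$T$ part is handled as above, while the post-$T$ part is immediate since chordal $\SLE_8$ in a simply-connected domain is conformally invariant (Theorem \ref{prop:USTcvSLE} or directly Proposition \ref{prop:SLE}), and the gluing time $T$ is itself conformally natural. A secondary technical nuisance is that $\phi(A')=A$ here (true isomorphism), so the $\e$-neighborhoods of cuts used to define $T^\e_c$ must be compared under $\phi$; since $\phi$ is bi-Lipschitz on compact subsets of $A$, the discrepancy between $\phi$ of an $\e$-neighborhood and the $\e$-neighborhood of $\phi(c)$ is $o(1)$ as $\e\to 0$, and Lemma \ref{lem:charact-cv-expl} only needs the family of all such stopping times, so this causes no real trouble. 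Once these points are dispatched, the conclusion is formal.
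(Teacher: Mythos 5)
Your proposal is correct and follows essentially the same route as the paper: characterize the law of $e_A$ through its stopped laws at the times $T^\e_c$, note that the reference $\SLE_8(2)$ and the determinantal Radon--Nikodym derivatives are built from conformally invariant harmonic data, and handle the inversion exchanging the boundary components via Remark \ref{rem:outerisinner}. The paper's proof is just a terser version of yours, leaving implicit the bookkeeping points (post-$T$ gluing, transport of $\e$-neighborhoods of cuts) that you spell out.
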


\begin{proof}
The law of the exploration process $e_A$ is characterized by its law stopped at the times $T^\e_c$. In turn, these are absolutely continuous with respect to the conformally invariant process $\SLE_8(2)$, with a Radon-Nikodym derivative that can be expressed using integrals of harmonic quantities, so that is also conformally invariant. Hence, the law of the outer exploration process is invariant by any conformal map that sends outer boundary to outer boundary.

Moreover, the outer exploration process is sent to the inner exploration process by an inversion. Remark \ref{rem:outerisinner} allows us to conclude that the measures $\mu^\#_.$ are preserved by such maps.
\end{proof}

In particular, the family of measures $\mu^\#_{A}$ can be naturally extended to any annular subdomain of the plane, without any regularity assumption.

Let us now state how the family $\mu^\#_{A}$ behaves under restriction to a subdomain.

\begin{lem}\label{lem:covar-conf-id}
Let $A'=A \sm H$ where $H$ is a compact subset of $\overline{A}$ intersecting its inner boundary, so that $A'$ is an annulus that shares its outer boundary with $A$. Then:
$$
\frac{\text{\normalfont d} \mu^\#_{A'}}{\text{\normalfont d} \mu^\#_{A}} (\l)  = e^{-\M(A,A';\l)}.
$$
\end{lem}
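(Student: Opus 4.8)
The strategy is to prove the absolute continuity relation at the discrete level, where it is a clean statement about ratios of numbers of spanning trees, and then pass to the limit using the convergences already established. First I would fix a cut $c$ disjoint from $H$, so that both $A$ and $A' = A\sm H$ are cut by $c$, and work up to the stopping time $T^\e_c$ (letting $\e\to 0$ and letting $c$ range over the countable dense family $\C$ afterwards, which by Lemma \ref{lem:charact-cv-expl} suffices to characterise the loop $\l_A$). On the discrete side, the exploration process $e^\d_{A^\d}$ restricted to $A^\d$ and the exploration process $e^\d_{A'^\d}$ are \emph{both} absolutely continuous with respect to the reference process $e^\d_{A^\d\sm c^\d}$ in the simply connected domain, with explicit Radon--Nikodym derivatives given by Lemma \ref{lem:DRN-expl} (i.e.\ Lemma \ref{lem:DRN-expl} applied once in $A^\d$ and once in $A'^\d$, noting that $A'^\d\sm c^\d = A^\d\sm(c^\d\cup H^\d)$ when $H$ meets the inner boundary, so the reference process is literally the same). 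Dividing the two derivatives, the reference process cancels, and after rewriting both numerator and denominator via the matrix-tree theorem one obtains
$$
\frac{\P[e^\d_{A'^\d}\text{ starts by }\g^\d]}{\P[e^\d_{A^\d}\text{ starts by }\g^\d]} = \frac{\det(\D_{A'^\d})\,\det(\D_{A^\d\sm K^\d})}{\det(\D_{A^\d})\,\det(\D_{A'^\d\sm K^\d})},
$$
where $K^\d$ is the image of $\g^\d$ and carries Neumann boundary conditions on the arc $\B_{K^\d} = \arcperso{a^\d,\g^\d(T^\e_{c^\d})}$ and Dirichlet elsewhere. The key point is that the right-hand side is exactly $e^{-\M(A^\d,A'^\d;K^\d)}$ in the discrete sense, i.e.\ the discrete analogue of the quantity defined in Proposition \ref{prop-defcocycle}, with $\Si = A$, $\Si' = A'$ and the loop $\l$ replaced by the hull $K$.

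The second step is to identify the scaling limit of this ratio of four Laplacian determinants with $e^{-\M(A,A';\l)}$. Here I would invoke the determinantal identity of Lemma \ref{lem:ratio-det}: by inserting the cuts $d_1,d_2$ separating $c$ from $K$ exactly as in the proof of Lemma \ref{lem:DRN-expl}, each of the two discrete ratios $\#T(A^\d\sm K^\d)/\#T(A^\d)$ and $\#T(A'^\d\sm K^\d)/\#T(A'^\d)$ — well, more precisely the combination appearing above — can be written as a ratio of Fredholm-type determinants $\det(\Id - \Ha^{c^\d\to d^\d}\circ\Ha^{d^\d\to c^\d})$ in the two domains, which is precisely the form handled by Proposition \ref{prop:cv-det} and Lemma \ref{lem:detisnonzero}. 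Thus the discrete ratio converges, uniformly on Carath\'eodory-compact sets, to the corresponding ratio of continuous Fredholm determinants, which by the continuous half of Lemma \ref{lem:ratio-det} equals $e^{-\M(A,A';K)}$ where now $K = e_A([0,T^\e_c])$ is the trace of the continuous exploration process (Neumann on $\arcperso{a,e_A(T^\e_c)}$, Dirichlet elsewhere). Combined with Theorem \ref{thm:cv-loop} / Theorem \ref{thm:cv-expl} (convergence of $e^\d_{A^\d}$ to $e_A$) and the fact that $\M$ is continuous in its hull argument (Remark \ref{rem:cont-cocycle}), this gives, for each stopping time $T^\e_c$,
$$
\frac{\dis \mu^\#_{A'}}{\dis \mu^\#_A}\big(\l\ \big|\ \F_{T^\e_c}\big)(\l) = e^{-\M(A,A';\,e_A([0,T^\e_c]))}\,\ind_{e_A([0,T^\e_c])\cap H = \emptyset}.
$$

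The final step is to send $\e\to 0$ and let $c$ exhaust $\C$, upgrading from hulls to the loop. As $T^\e_c\nearrow T$ (and then taking the supremum over the $\widetilde T_n$ as in Lemma \ref{lem:charact-cv-expl}), the hulls $K = e_A([0,T^\e_c])$ increase to $e_A([0,T])$, whose outer boundary is the loop $\l_A$ (Theorem \ref{thm:cv-loop}); by the cocycle property (Proposition \ref{prop:cocycle}) and the remark that $\M(\Si,\Si';\cdot)$ depends only on a thin tube around the loop, $\M(A,A';e_A([0,T^\e_c]))$ is eventually constant equal to $\M(A,A';\l)$ once the hull separates a neighbourhood of the inner boundary, and the indicator $\ind_{e_A([0,T^\e_c])\cap H=\emptyset}$ stabilises to $\ind_{\l\subset A'}$. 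A uniform integrability argument — using that the Radon--Nikodym derivatives are bounded above and below on the relevant Carath\'eodory-compact sets (Lemma \ref{lem:detisnonzero}) and that $\M$ is bounded in the parameters (Remark \ref{rem:cont-cocycle}) — lets one pass to the limit in the conditional expectations, yielding $\dis\mu^\#_{A'}/\dis\mu^\#_A(\l) = e^{-\M(A,A';\l)}$ as claimed (the indicator being automatically $1$ since $\mu^\#_{A'}$ is supported on loops in $A'$).

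\medskip

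The main obstacle I anticipate is the passage to the limit in the last step: one must carefully justify that the discrete bound on Radon--Nikodym derivatives survives the limit and that the ``hull to loop'' limit of $\M$ is legitimate despite $\M$ being defined via $\zeta$-regularised determinants on \emph{smooth} domains. This requires either approximating the (possibly rough) hull $e_A([0,T^\e_c])$ from outside by smooth hulls and using continuity of $\M$, or — more robustly — observing that the Fredholm-determinant expression $\det_F(\Id - \Ha\circ\Ha)$ of Lemma \ref{lem:ratio-det} makes sense and is continuous for arbitrary hulls (Remark \ref{rem:ratio-det-non-smooth}), so that one works throughout with that expression and only invokes the $\zeta$-regularised form at the very end on genuinely smooth data. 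Handling the fact that $A$ and $A'$ themselves need not be smooth (only piecewise $\C^1$) is dealt with the same way, via Remark \ref{rem:ratio-det-non-smooth}.
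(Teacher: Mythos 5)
Your plan diverges from the paper's proof in a way that creates a genuine gap in the final step. The paper does not go through the exploration process at all here: it computes the discrete Radon--Nikodym derivative of the \emph{loop} laws directly, as $\frac{\det(\D_{A^\d})\det(\D_{{A'}^\d\sm\l^\d})}{\det(\D_{{A'}^\d})\det(\D_{A^\d\sm\l^\d})}$ with \emph{Neumann} boundary conditions on $\l^\d$ (this is the conditional structure of the UST given its interface loop: two independent wired/free USTs in the two components). It then inserts a smooth curve $d$ separating $H$ from $\l$, rewrites this as a ratio of two determinants $\det(\Id-\Ha^{\partial H\to d}\circ\Ha^{d\to\partial H})$ in $A$ and $A\sm\l$, passes to the limit via the Section \ref{cv-det} machinery, and identifies the result with $e^{-\M(A,A';\l)}$ by Lemma \ref{lem:ratio-det} and Remark \ref{rem:ratio-det-non-smooth}; general $A'$ is then handled by smooth Carath\'eodory approximation plus Lemma \ref{lem:covar-conf-isom} and Remark \ref{rem:cont-cocycle}. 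Your route instead computes the Radon--Nikodym derivative of the \emph{exploration-process} laws stopped at $T^\e_c$, which is indeed the four-determinant ratio you write (directly from the matrix-tree theorem; note in passing that the two reference processes $e^\d_{A^\d\sm c^\d}$ and $e^\d_{{A'}^\d\sm c^\d}$ live in different domains and do not ``literally cancel'', though this is harmless).

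The gap is in the passage from hulls to the loop. The hull $K=e_A([0,T^\e_c])$ carries \emph{mixed} boundary conditions (Neumann on the arc $\arcperso{a,e_A(T^\e_c)}$, Dirichlet on the rest), while $\M(A,A';\l)$ is defined with pure Neumann conditions on $\l$. The harmonic extension operators between $\partial H$ and $d$ in $A\sm K$ see these boundary conditions, so $\M(A,A';K)$ does \emph{not} stabilise at $\M(A,A';\l)$ as the stopping times increase to $T$: in Brownian-loop terms, the limit of your stopped derivatives records the mass of loops hitting both $H$ and the mixed-boundary hull, not the mass relative to the Neumann loop. What your limit actually yields is the Radon--Nikodym derivative of the exploration-process laws; to obtain the derivative of the loop laws $\mu^\#_{A'}/\mu^\#_A$ one must then take a conditional expectation given $\sigma(\l)$ and show that it equals $e^{-\M(A,A';\l)}$ --- precisely the step that encodes the conditional UST structure in the inner component, and which your argument does not address. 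The paper's direct discrete computation on the loop bypasses this entirely, which is why it is the natural route here.
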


\begin{figure}[htb]
\begin{center}
\includegraphics[width = 8cm]{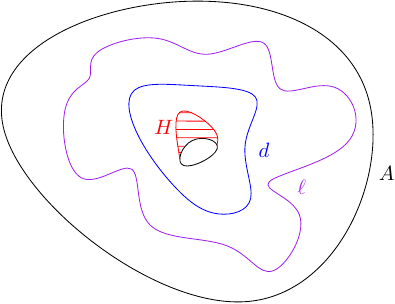}
\caption{The set-up of Lemma \ref{lem:covar-conf-id}.}
\end{center}
\end{figure}

\begin{proof}
Let us first assume that $A'$ is a smooth subdomain of $A$. The proof then follows those of Lemma \ref{lem:DRN-expl} and Proposition \ref{prop:cv-det}.

The corresponding Radon-Nikodym derivative in the discrete setting can be explicitly computed as a ratio of the total number of spanning trees on different graphs, which can be rewritten as a ratio determinants of Laplacians (where the curve $\l$ carries Neumann boundary conditions):
$$
\frac{ \text{\normalfont d} \mu^{\#\d}_{{A'}^\d}}{ \text{\normalfont d} \mu^{\#\d}_{A^\d}}(\l^\d) = \frac{\det(\D_{A^\d})\det(\D_{{A'}^\d\sm\l^\d})}{\det(\D_{{A'}^\d})\det(\D_{A^\d\sm\l^\d})}.
$$

Under $\mu^\#_{A'}$, the loop $\l$ almost surely does not touch the boundary (Proposition \ref{prop:loops-avoid-boundary}), so we can choose a smooth simple curve $d$ that disconnects $H$ from $\l$ in the annulus $A$. We can then rewrite (as in Section \ref{subsec:spandet}):
$$
\frac{ \text{\normalfont d} \mu^{\#\d}_{{A'}^\d}}{ \text{\normalfont d} \mu^{\#\d}_{A^\d}}(\l^\d) = \frac{\det(\Id - \Ha^{\partial H^\d\rightarrow d^\d}_{A^\d} \circ \Ha^{d^\d\rightarrow \partial H^\d}_{A^\d})} {\det(\Id - \Ha^{\partial H^\d\rightarrow d^\d}_{A^\d\sm\l^\d} \circ \Ha^{d^\d\rightarrow \partial H^\d}_{A^\d\sm\l^\d})}.
$$
The convergence of these determinants then follows from the work in Section \ref{cv-det} (Lemma \ref{lem:poisson-kernel-bounded} and Remark \ref{rem:harm-meas-conv-2-conn} allow us to work in the doubly-connected domains that appear here).

Finally, using Lemma \ref{lem:ratio-det} and Remark \ref{rem:ratio-det-non-smooth}, we get the claim for a smooth subdomain $A'$:
$$
\frac{\text{\normalfont d} \mu^\#_{A'}}{\text{\normalfont d} \mu^\#_{A}} (\l) = \frac{{\det}_F(\Id - \Ha^{\partial H\rightarrow d}_{A} \circ \Ha^{d\rightarrow \partial H}_{A})} {{\det}_F(\Id - \Ha^{\partial H\rightarrow d}_{A\sm\l} \circ \Ha^{d\rightarrow \partial H}_{A\sm\l})}= e^{-\M(A,A\sm H;\l)}.
$$

Now, a general subdomain $A'$ can always be approximated (in Carath\'eodory topology) by a sequence of smooth subdomains $A'_n$ of same modulus. Combining the invariance of the loop measure under conformal isomorphisms (Lemma \ref{lem:covar-conf-isom}) with the fact that the cocycle $\M(A,A';\l)$ is continuous in $\l$ and $A'$ (Remark \ref{rem:cont-cocycle}) allows us to conclude.
\end{proof}

\subsection{From annuli to Riemann surfaces}\label{from-ann-to-rs}

Let us now explain how to get, from the family of measures $\mu^\#_{A}$, a family of measures $\mu_\Si$ on loops on any Riemann surface satisfying a nice conformal covariance property. We will then recall how this covariance can be absorbed by an algebraic structure (namely the determinant line bundle): we will describe how to build from $\mu_\Si$ a (formally) conformally invariant family of measures taking values in (some power of) the determinant line bundle.

\subsubsection{Building an $\SLE_2$ loop measure}\label{subsec-BuildingSLE2}

\begin{prop}\label{prop:extloopmeas}
For any Riemann surface $\Si$, we can define a measure $\mu_\Si$ on the space of its simple loops ${\mc L}(\Si)$ by:
$$
\ind_{\l \in {\mc L}^\times(A)} \text{\normalfont d}{\mu_\Si} (\l)   = e^{\M(\Si, A;\l)} \text{\normalfont d}\mu^\#_{A}(\l),
$$
for any loop $\l$ that is topologically non-trivial in the conformal annulus $A \subset \Si$.
\end{prop}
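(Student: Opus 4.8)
The plan is to show that the right-hand side of the defining identity is independent of the auxiliary annulus $A$, and then to assemble these local prescriptions into a single Borel measure on ${\mc L}(\Sigma)$. First I would record that the prescription really covers every simple loop: for any $\l\in{\mc L}(\Sigma)$ a sufficiently thin tubular neighbourhood $A\subset\Sigma$ of $\l$ is a conformal annulus with $\l\in{\mc L}^\times(A)$, and since the condition ``$\l\subset A$, with $\l$ generating $\pi_1(A)$'' is open for $\Top$, the sets ${\mc U}_A:={\mc L}^\times(A)$, regarded as subsets of ${\mc L}(\Sigma)$, form an open cover of ${\mc L}(\Sigma)$ as $A$ ranges over conformal annuli contained in $\Sigma$. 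On ${\mc U}_A$ the right-hand side defines a Borel measure $\nu_A:=e^{\M(\Sigma,A;\cdot)}\mu^\#_A$; this is legitimate because $\M(\Sigma,A;\cdot)$ is well defined for the subsurface pair $A\subset\Sigma$ via a neutral collection of metrics (Proposition \ref{prop-defcocycle}), is $\Top$-continuous, hence measurable (Remark \ref{rem:cont-cocycle}), and $\mu^\#_A$ is a Borel measure (Theorem \ref{thm:cv-loop}).

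The heart of the argument is consistency on overlaps. Given conformal annuli $A_1,A_2\subset\Sigma$ and $\l\in{\mc U}_{A_1}\cap{\mc U}_{A_2}$, I would pick a thin annular neighbourhood $A_3$ of $\l$ contained in the connected component of $A_1\cap A_2$ containing $\l$; for $A_3$ small enough, $\l\in{\mc L}^\times(A_3)$ and $A_3$ is a retract of each $A_i$ in the sense of Proposition \ref{prop:covar-conf}. It therefore suffices to compare $\nu_A$ and $\nu_{A'}$ for a nested pair $A'\subset A$ with $A'$ a retract of $A$, both carrying $\l$ as a non-trivial loop. For such a pair, Proposition \ref{prop:covar-conf} applied to the inclusion $A'\hookrightarrow A$ gives $\mathrm{d}\mu^\#_{A'}=e^{-\M(A,A';\cdot)}\ind_{\cdot\subset A'}\,\mathrm{d}\mu^\#_A$, while the cocycle property (Proposition \ref{prop:cocycle}) applied to $A'\subset A\subset\Sigma$ gives $\M(\Sigma,A';\cdot)=\M(\Sigma,A;\cdot)+\M(A,A';\cdot)$; multiplying the first identity by $e^{\M(\Sigma,A';\cdot)}$ and substituting the second collapses it to $\nu_{A'}=\ind_{\cdot\subset A'}\,\nu_A$, that is $\nu_{A'}=\nu_A|_{{\mc U}_{A'}}$ since every loop in ${\mc U}_{A'}$ lies in $A'$. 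Applying this to $A_3\subset A_1$ and to $A_3\subset A_2$ yields $\nu_{A_1}=\nu_{A_3}=\nu_{A_2}$ near $\l$, which is the required consistency.

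Finally I would glue. The space ${\mc L}(\Sigma)$ is second countable for $\Top$, so I can extract a countable subcover $\{{\mc U}_{A_n}\}_n$, disjointify it by setting $V_1={\mc U}_{A_1}$ and $V_n={\mc U}_{A_n}\setminus\bigcup_{k<n}{\mc U}_{A_k}$, and define $\mu_\Sigma(E):=\sum_n\nu_{A_n}(E\cap V_n)$ for Borel $E\subset{\mc L}(\Sigma)$. For any conformal annulus $A\subset\Sigma$ and any Borel $E\subset{\mc U}_A$ the overlap consistency gives $\nu_{A_n}(E\cap V_n)=\nu_A(E\cap V_n)$ for every $n$, hence $\mu_\Sigma(E)=\nu_A(E)$, which is exactly the asserted identity $\ind_{\l\in{\mc L}^\times(A)}\,\mathrm{d}\mu_\Sigma(\l)=e^{\M(\Sigma,A;\l)}\,\mathrm{d}\mu^\#_A(\l)$; in particular $\mu_\Sigma$ is well defined, independently of all choices. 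I expect the consistency step to be the only genuine obstacle --- it is where the cocycle identity for $\M$ and the restriction/covariance formula for $\mu^\#$ must be combined in exactly the right way, and it rests on the standard but not vacuous topological fact that two annular neighbourhoods of the same simple loop admit a common sub-annulus that is a retract of both; the rest is routine measure-theoretic bookkeeping.
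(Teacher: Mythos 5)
Your proof is correct and follows essentially the same route as the paper, which simply invokes the conformal covariance of $\mu^\#_{\cdot}$ (Proposition \ref{prop:covar-conf}) together with the cocycle property of $\M$ (Proposition \ref{prop:cocycle}) to ensure consistency. You have merely made explicit the overlap argument via a common sub-annulus and the routine measure-theoretic gluing that the paper leaves implicit.
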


\begin{proof}
The conformal covariance of the family $\mu^\#_{A}$ (Proposition \ref{prop:covar-conf}), as well as the cocycle property \ref{prop:cocycle} of the quantity $\M$ ensure this definition is consistent.
\end{proof}

The family thus defined satisfies, for any conformal embedding $\Si' \hookrightarrow \Si$, the following restriction covariance property:
$$
\frac{ \text{\normalfont d} \mu_{\Si'} }{ \text{\normalfont d}{\mu_\Si} }(\l) = e^{-\M(\Si,\Si';\l)}\ind_{\l \in {\mc L}(\Si')}.
$$

\subsubsection{Kontsevich-Suhov loop measures}

Let us now recall the set-up of Kontsevich and Suhov (\cite{KontSuh}).

\begin{defn}\label{def:c-lcc}
A $c$-locally conformally covariant ($c$-lcc) loop measure is a collection $(\lambda_\Sigma)_\Sigma$ indexed by all Riemann surfaces $\Sigma$, such that $\lambda_\Sigma$ is a $|\Det|_\Sigma^{\otimes c}$-valued measure on ${\mc L}(\Sigma)$ and such that for any embedding $\xi:\Sigma'\hookrightarrow\Sigma$,
$$
\xi^*\lambda_{\Sigma}=\lambda_{\Sigma'}.
$$
\end{defn}

For our purposes, we canonically write the fiber of the determinant line bundle
$$
|\det|_{\Sigma,\ell}\simeq|\det|_\Si\otimes|\det|_{\Si\setminus\ell}^{-1},
$$
and then pick the element $s_\Si(\ell)=(\psi_{\Si}\otimes\psi_{\Si\sm\l}^{-1})^{\otimes - \frac{1}{2}}$ in the RHS, where the element $\psi$ of $|\det|^{-2}$ is the area-corrected Laplacian $\zeta$-determinant with boundary conditions Dirichlet on $\partial\Sigma$ and Neumann on $\ell$ (see (\ref{eq:psi1}) and (\ref{eq:psi2})). In other words, we trivialize the line bundle $|\Det|_\Sigma^{\otimes c}$ using the functional given on normalized (and well-behaved) metrics by $s^c_\Sigma(\ell)=(\det_\z(\D_{\Si\setminus\ell})/\det_\z(\D_\Si))^{c/2}$. Indeed, recall that $\zeta$-determinants are positive by definition, and so $s^c_\Sigma$ is a well-defined non-vanishing section of the line bundle $|\Det|_\Sigma^{\otimes c}$.

If we set $\mu_\Si=s_\Si^{-c}\lambda_\Si$, we obtain a (scalar) measure on ${\mc L}(\Si)$. The $c$-lcc property of the collection $\lambda_.$ then becomes (for an embedding $\Si' \hookrightarrow \Si$),

$$
\frac{d\mu_{\Si'}}{d\mu_\Si}(\ell)=\frac{s^c_{\Si}}{s^c_{\Si'}}(\ell)\ind_{\l \in {\mc L}(\Si')}=\left(\frac{\det_\z(\D_{\Si'})\det_\z(\D_{\Si\setminus\ell})}{\det_\z(\D_{\Si})\det_\z(\D_{\Si'\setminus\ell})}\right)^{c/2}\ind_{\l \in {\mc L}(\Si')}=e^{\frac{c}{2}\M(\Si,\Si';\l)}\ind_{\l \in {\mc L}(\Si')},
$$
where the ratio of determinants is evaluated on a neutral collection of normalized metrics.

Moreover, remark that the space of simple loops ${\mathcal L}(\Sigma)$ can be covered by the sets ${\mc L}^\times(A)$ (that consists of simple loops that generate $\pi_1(A)$), where $A$ is an embedded annulus in $\Sigma$. Hence, by general constructive measure theory, a $c$-lcc loop measure is completely characterized by the data, for any annulus $A$ of the restriction $\lambda^\times_A$ of $\lambda_A$ to the set of loops ${\mc L}^\times(A)$. Conversely, a $c$-lcc loop measure can be constructed given any collection of measures $\lambda^\times_A$ that satisfies the restriction for inclusion of annuli $A'\hookrightarrow A$ (where $A'$ is a retract of $A$). 

This is nothing more but a more abstract phrasing of the procedure we followed in Section \ref{subsec-BuildingSLE2} to build, from a family of measures $\mu^\#_A$ satisfying a conformal covariance property, a family of loop measures on all Riemann surfaces. In this abstract language, the $\SLE_2$ loop measure we built corresponds to a $c$-lcc family of measures $\lambda_\Si$ with parameter $c=-2$. 

\begin{thm0}
There exists a $-2$-lcc loop measure.
\end{thm0}

\begin{proof}
By the procedure described above, the existence of a $-2$-lcc loop measure $\lambda_\Si$ is a direct consequence of Proposition \ref{prop:covar-conf}. Note that we directly constructed the family of scalar measures $\mu_\Si$ corresponding to $\lambda_\Si$ via the trivializations $s_\Si$ in Proposition \ref{prop:extloopmeas}.
\end{proof}

The family of scalar measures $\mu_\Si$ that can be obtained from a given $c$-lcc loop measure $\lambda_\Si$ is in no way unique. The trivializations $s_\Si$ we used have a nice property: for any annulus $A$, the measure $\ind_{\l \in {\mc L}^\times(A)} \text{\normalfont d}\mu_A(\l)$ is a probability measure. However, such a requirement is far from characterizing $\mu_\Si$ uniquely (or even the measures $\mu^\#_A$ for that matter). Other choices of trivializations of the determinant line bundle would yield a family of scalar measures satisfying a restriction covariance property with a cocycle $\widetilde{\M}$ differing from $\M$ by a coboundary.

In particular, we could have built other natural families of scalar measures $\mu_\Si$ with a restriction property given by a cocycle $\widetilde{\M}$ corresponding to some other regularization of masses of Brownian loops (recall Remark \ref{rem:MregBl}). For example, Wendelin Werner's $\SLE_{8/3}$ loop measure (\cite{Wer_loops}) provides a probabilistic regularization of the Brownian loop measure. Another regularization was introduced by Field and Lawler in \cite{FieLawBrownianloop}; their method would add a tensorial dependency at a marked interior point.

\paragraph{Acknowledgements}
We would like to thank the referee for her/his careful reading as well as for her/his helpful suggestions for improvements.

\bibliography{biblio}{}
\bibliographystyle{plain}

----------------------------------

\noindent Columbia University\\
Department of Mathematics\\
2990 Broadway\\
New York, NY 10027

\end{document}